\def\namedlabel#1#2{\begingroup
    #2%
    \def\@currentlabel{#2}%
    \phantomsection\label{#1}\endgroup
}
\theoremstyle{plain}
\newtheorem{theorem}{Theorem}[section]
\newtheorem{prop}[theorem]{Proposition}
\newtheorem{lemma}[theorem]{Lemma}
\theoremstyle{definition}
\newtheorem{defi}[theorem]{Definition}
\newtheorem{ex}[theorem]{Example}
\newtheorem{step}{Step}
\theoremstyle{remark}
\newtheorem{remark}[theorem]{Remark}
\newcommand{\floor}[1]{\left \lfloor #1 \right \rfloor}
\newcommand{\ceil}[1]{\left \lceil #1 \right \rceil}
\newcommand{\fpart}[1]{\left \{ #1 \right \}}
\def\Q{\mathbb{Q}}
\def\ZZ{\mathbb{Z}}
\def\QQ{\mathbb{Q}}
\def\CC{\mathbb{C}}
\def\PP{\mathbb{P}}
\def\w{\omega}
\def\cO{\mathcal{O}}
\def\cF{\mathcal{F}}
\def\cD{\mathcal{D}}
\def\cC{\mathcal{C}}
\def\cN{\mathcal{N}}
\def\cM{\mathcal{M}}
\def\b{\ell}
\def\v{v}
\def\vv{b}
\def\Si{S}
\def\s{\sigma}
\def\qa{{\boldsymbol{{G}}}}
\DeclareMathOperator{\mult}{mult}
\DeclareMathOperator{\ttop}{top}
\DeclareMathOperator{\Weil}{Weil}
\DeclareMathOperator{\Cart}{Cart}
\DeclareMathOperator{\ord}{ord}
\DeclareMathOperator{\GL}{GL}
\DeclareMathOperator{\sing}{Sing}
\DeclareMathOperator{\reg}{Reg}
\DeclareMathOperator{\coker}{coker}
\DeclareMathOperator{\ddiv}{div}
\title[Cyclic branched coverings of surfaces with abelian quotient ...]
{Cyclic branched coverings of surfaces with abelian quotient singularities}
\author[E.~Artal]{Enrique Artal Bartolo}
\address[E.~Artal Bartolo]{Departamento de Matem\'{a}ticas, IUMA \\
Universidad de Zaragoza \\
C.~Pedro Cerbuna 12, 50009, Zaragoza, Spain}
\urladdr{http://www.unizar.es}
\email{artal@unizar.es}
\author[J.I.~Cogolludo]{Jos{\'e} Ignacio Cogolludo-Agust{\'i}n}
\address[J.I.~Cogolludo-Agustín]{Departamento de Matem\'aticas, IUMA \\ 
Universidad de Za\-ra\-go\-za \\ 
C.~Pedro Cerbuna 12 \\ 
50009 Zaragoza, Spain} 
\urladdr{http://www.unizar.es}
\email{jicogo@unizar.es} 
\author[J.~Mart\'{\i}n-Morales]{Jorge Mart\'{\i}n-Morales}
\address[J.~Martín-Morales]{Centro Universitario de la Defensa, IUMA \\
Academia General Militar \\ 
Ctra.~de Huesca s/n. \\ 
50090, Zaragoza, Spain}
\urladdr{http://cud.unizar.es/martin}
\email{jorge@unizar.es}
\subjclass[2010]{14E20, 14H30, 14F45, 14H50}  
\keywords{Cyclic branched covering, normal surfaces, quotient singularity, weighted blow-up,
embedded $\Q$-resolution, weighted projective plane, Zariski pairs, Alexander polynomials,
multiplier ideals, adjunction ideals}
\thanks{Partially supported by MTM2016-76868-C2-2-P and 
Gobierno de Arag{\'o}n (Grupo de referencia ``{\'A}lgebra y Geometr{\'i}a'')
cofunded by Feder 2014-2020 ``Construyendo Europa desde Arag\'on''.
The third author is also partially supported by FQM-333 ``Junta de Andaluc{\'\i}a''.}
\tikzset{commutative diagrams/.cd,
mysymbol/.style={start anchor=center,end anchor=center,draw=none}
}
\newcommand\MySymb[2][\#]{%
  \arrow[mysymbol]{#2}[description]{#1}}
\begin{document}

\begin{abstract}
In~\cite{Esnault-Viehweg82}, Esnault-Viehweg developed the theory of cyclic branched coverings $\tilde X\to X$ of 
smooth surfaces providing a very explicit formula for the decomposition of $H^1(\tilde X,\CC)$ in terms of a resolution 
of the ramification locus. Later, in~\cite{Artal94} the first author applies this to the particular case of coverings
of $\PP^2$ reducing the problem to a combination of global and local conditions on projective curves.

In this paper we extend the above results in three directions: first, the theory is extended to surfaces with abelian 
quotient singularities, second the ramification locus can be partially resolved and need not be reduced, and finally global 
and local conditions are given to describe the irregularity of cyclic branched coverings of the weighted projective plane.

The techniques required for these results are conceptually different and provide simpler proofs for the classical results.
For instance, the local contribution comes from certain modules that have the flavor of quasi-adjunction and multiplier 
ideals on singular surfaces.

As an application, a Zariski pair of curves on a singular surface is described. In particular, we prove the existence
of two cuspidal curves of degree 12 in the weighted projective plane $\PP^2_{(1,1,3)}$ with the same singularities but 
non-homeomorphic embeddings. 
This is shown by proving that the cyclic covers of $\PP^2_{(1,1,3)}$ of order 12 ramified along the curves have different
irregularity. In the process, only a partial resolution of singularities is required.
\end{abstract}

\maketitle

\section*{Introduction}
Motivated by the Riemann’s Existence Theorem and the classification of Riemann surfaces by their projection onto the 
projective line, Zariski (\cite{Zariski-problem}) started the classification of surfaces via a projection onto the 
projective plane, the study of the fundamental group of the complement of the ramification locus (a projective curve) 
and their influence on the topology of the original surface as a branched covering of the projective plane. 
He realized that not only the type of singularities of the branched locus was relevant, but their position as well 
(\cite{Zariski-irregularity}).
In particular, he proved that the cyclic branched cover of an irreducible curve of degree $6d$ with only nodes and cusps 
is irregular, it has non-trivial first cohomology group, if the \emph{effective dimension} of the space of curves of 
degree $5d-3$ passing through the cusps is larger than its \emph{expected} (or \emph{virtual}) dimension. This difference 
was called \emph{superabundance}. More precise descriptions of the irregularity of cyclic branched coverings of curves 
in $\PP^2$ have been given by Libgober~\cite{Libgober-alexander}, Esnault~\cite{es:82}, 
Loeser-Vaqui\'e~\cite{Loeser-Vaquie-Alexander}, and Sabbah~\cite{Sabbah-Alexander}, in general smooth surfaces by 
Esnault-Viehweg~\cite{Esnault-Viehweg82} or even for general abelian branched coverings by 
Libgober~\cite{Libgober-characteristic}. 
It is worth pointing out that very concrete formulas were given in~\cite{Artal94,Libgober-characteristic} 
for the particular case of curves on $\PP^2$. These formulas combine a local 
ingredient coming from a resolution of the singularities of the branched locus and a global one measuring the 
superabundance of a certain linear system of curves on~$\PP^2$. 

In this paper we address the problem of describing the irregularity of cyclic branched coverings of surfaces with 
abelian quotient singularities and use this description to find formulas for the particular case of the weighted 
projective plane. Recall that an abelian quotient singular point in dimension two is necessarily a cyclic singularity.
The main result of this paper is presented in Theorem~\ref{thm:conucleo_singular}, where we describe the dimension 
of the equivariant spaces of the first cohomology of a $d$-cyclic cover $\rho:\tilde X\to \PP^2_w$ ramified along a
(not necessarily reduced) curve $\mathcal{C} = \sum_j n_j \mathcal{C}_j$. The cover $\rho$ naturally defines a 
divisor $H$ such that $dH$ is linearly equivalent to $\mathcal{C}$. If $K_{\mathbb{P}_w^2}$ denotes the canonical 
divisor of $\mathbb{P}_w^2$ and 
\[
\mathcal{C}^{(k)} = \sum_{j=1}^r \floor{\frac{kn_j}{d}} \mathcal{C}_j, \qquad 0 \leq k < d,
\]
then these dimensions are given as the cokernel of the evaluation linear maps
\[
\pi^{(k)}: H^0\left(\PP^2_w,\mathcal{O}_{\PP^2_w}\left( kH+K_{\PP^2_w} - \mathcal{C}^{(k)}\right) \right) 
\longrightarrow \bigoplus_{P \in S}
\frac{\mathcal{O}_{\PP^2_w,P}\left( kH+K_{\PP^2_w} - \mathcal{C}^{(k)}\right)}{\mathcal{M}_{\mathcal{C},P}^{(k)}}
\]
where $\mathcal{M}_{\mathcal{C},P}^{(k)}$ is defined as the following quasi-adjunction-type
$\mathcal{O}_{\PP^2_w,P}$-module
\[
\mathcal{M}_{\mathcal{C},P}^{(k)}\!:=\!
\left\{ g \in\mathcal{O}_{\PP^2_w,P}\left( kH+K_{\PP^2_w} - \mathcal{C}^{(k)}\right)
\vphantom{\sum_{j=1}^r}\!\right.\!
\left|\ \mult_{E_\v} \pi^* g > 
\sum_{j=1}^r \fpart{\frac{kn_j}{d}} m_{\v j} -\! \nu_\v, \ \forall \v \in \Gamma_P\! \right\}\!.
\]
The symbol $\{\cdot\}$ denotes the decimal part of a rational number and 
the multiplicities $m_{\v j}$ and $\nu_\v$ are provided by $\pi^{*} \mathcal{C}_j = \hat{\mathcal{C}}_j
+ \sum_{P \in \Si} \sum_{\v \in \Gamma_P} m_{\v j} E_\v$ and $K_{\pi} = \sum_{P \in S} \sum_{\v \in \Gamma_P}
(\nu_\v-1) E_\v$ for an embedded $\mathbb{Q}$-resolution $\pi$ of $\mathcal{C} \subset \mathbb{P}^2_w$,
cf.~Definition~\ref{def:M}.

As a consequence, 
\begin{equation}\label{eq:h1}
h^1(\tilde X, \CC)=2\sum_{k=0}^{d-1} \dim \coker \pi^{(k)}.
\end{equation}
These formulas
also reminisce the local and global interplay of conditions on linear systems on the base surface. Also, the local 
conditions can be obtained from a $\Q$-resolution of the singularities, which in particular allows for simpler 
theoretical and practical algorithms to calculate the irregularity. Moreover, in this paper the ramification along 
each irreducible component need not be the same, which translates into considering a non-reduced curve as a ramification
locus. This allows for general formulas for characteristic polynomials of the monodromy of non-reduced curves and
calculations of twisted Alexander polynomials of the complement $M$ of the curve associated with general epimorphisms 
$\pi_1(M)\to \ZZ$. 

From a purely topological point of view, it is worth highlighting the key role of singularities of the surface when 
studying coverings. This is described in Examples~\ref{ex:4A2} and \ref{ex:cusp23}. In orther words, coverings of a 
singular surface might be forced to ramify along the exceptional divisors of a resolution of the singularities of the 
surface.

As a non trivial application, in section~\ref{sec:zariski-pair} we present a Zariski pair of irreducible curves on a 
weighted projective plane, that is, two curves in the same plane $\PP^2_w$ with the same degree and local type of singularities, 
but whose embeddings are not homeomophic. In particular, we present two cuspidal curves of degree 12 in $\PP_{(1,1,3)}^2$ 
with the same local type of singularities, use the fact that~\eqref{eq:h1} is an invariant of the pair 
$(\PP^2_w,\cC)$, and calculate the different irregularity of the branched coverings of the singular surface 
ramified along the curves. The calculation of this irregularity boils down to the study of the superabundance 
of curves of degree $5$ in~$\PP_{(1,1,3)}^2$.

The following is an outline of the paper: in section~\ref{sec:settings} a general discussion on $\Q$-divisors on normal 
surfaces is given together with a review of the classical results on branched coverings of smooth surfaces. 
In section~\ref{sc:Esnault} the theory of Esnault-Viehweg is extended to branched coverings of surfaces with cyclic 
quotient singularities ramified along (non-necessarily reduced) divisors. 
This serves as a motivation to introduce certain local invariants in section~\ref{sec:local}
associated with a Weil divisor on a quotient surface singularity. Section~\ref{sec:global} is devoted 
to proving the main result discussed above in Theorem~\ref{thm:conucleo_singular} for the 
irregularity of a branched covering of the weighted projective plane.
Finally, in section~\ref{sec:examples} examples and applications are given. 
For instance local-global techniques are used to provide a proof that the invariant defined in section~\ref{sec:local}
is independent of the $\Q$-resolution. Also, a Zariski pair of curves in the projective plane is presented. 
The proof of the existence of such a pair is based on the fact that certain branched covers ramified along the curves
have different irregularity.

\subsection*{Acknowledgments}
The second and third authors want to thank the Fulbright 
Program\footnote{Salvador de Madariaga PRX18/00469 (second author) and Jos\'e Castillejo CAS18/00473 (third author) 
grants by Ministerio de Educaci\'on, Cultura y Deporte.}
for their financial support while writing this paper. They also want to thank the University of Illinois at Chicago, 
especially Anatoly Libgober, Lawrence Ein, and Kevin Tucker for their warm welcome and their support in hosting them.

We also want to thank the anonymous referee for their valuable suggestions that helped improve this paper.

\numberwithin{equation}{section}

\section{Settings}
\label{sec:settings}
In this section we will fix some notation and recall basic definitions and results from the theory of 
normal surfaces, $\QQ$-divisors, and intersection theory with special attention paid to quotient singularities,
in particular cyclic quotients. Most of these results can be found in the literature
but they will be briefly discussed here not only for the sake of completeness, but also in order to
clarify concepts and notation that might be ambiguous depending on the source. An extended version 
of the first part on quotient singularities can be found in~\cite{AMO-Intersection}, whereas the 
concepts and notation for line bundles and sheaf cohomology of rational divisors can be mainly found 
in~\cite{Mumford-topology,Sakai84}.

\subsection{Cyclic quotient surface singularities}
\label{subsec:CyclicQuotient}
\mbox{}

A quotient surface singularity for us will be a local chart that is analytically isomorphic to
$(\CC^2/G,0)$ for a finite linear group $G\subset \GL(2,\CC)$. If $G$ is a cyclic group, then 
we refer to this as a cyclic quotient singularity. A complex analytic surface that admits an open 
covering $\{U_\lambda\}$ whose local charts are (cyclic) quotient surface singularities is called a 
\emph{(cyclic) $V$-surface}. Projective algebraic $V$-surfaces are in particular normal surfaces.

A standard notation for a cyclic quotient surface singularity is $X=\frac{1}{d}(p,q)$ for pairwise coprime 
positive integers $d,p,q>0$, meaning $\CC^2/\qa_{d,p,q}$ where 
$\qa_{d,p,q}=\left\langle \left( 
\begin{smallmatrix}
\zeta_d^{p}&0\\0&\zeta_d^{q} 
\end{smallmatrix}
\right)\right\rangle$ 
and $\zeta_d\in \CC$ is a primitive $d$-th root of unity. 

In this context, the local ring $\cO_X$ of functions on $X$ is the ring of invariants of $\qa_{d,p,q}$, namely,
$$\cO_X=\CC\{x,y\}^{\qa_{d,p,q}}=\{h\in \CC\{x,y\} \mid h(g\cdot (x,y))=h(x,y) \text{ for all } g\in \qa_{d,p,q}\}.$$

Note that $d$, the order of $\qa_{d,p,q}$, does not completely determine the type of the cyclic singularity.
For instance $\cO_{\frac{1}{3}(1,2)}=\langle 1,x^3,y^3,xy\rangle\cO_X\cong \CC\{u,v,w\}/(uv=w^3)$ defines
locally a complete intersection singularity, whereas 
$\cO_{\frac{1}{3}(1,1)}=\langle 1,x^3,y^3,xy^2,x^2y\rangle\cO_X$ does not.

Morphisms between $V$-surface singularities are defined as morphisms $\CC^2\to \CC^2$ that respect the group action.

Other interesting objects associated with a cyclic quotient singularity are the modules of quasi-invariant germs.
These are defined as the eigenspaces of the operator defined by the action of $\qa_{d,p,q}$ on the coordinates, that is,
$$
\cO_{X,\zeta_d^k}:=\{h\in \CC\{x,y\} \mid h(g\cdot (x,y))=\zeta_d^k h(x,y) \text{ for all } g\in \qa_{d,p,q}\}.
$$
Note that $\cO_{X,\zeta_d^k}$ has a natural structure of $\cO_X$-submodule of $\CC\{x,y\}$.
Also $\cO_X=\cO_{X,\zeta_d^0}$ and $\CC\{x,y\}=\bigoplus_{k=0}^{d-1}\cO_{X,\zeta_d^k}$. 
This latter decomposition is associated with the $d$-th cyclic cover $(\CC^2,0)\to (X,0)$.
Moreover, this shows that any effective Weil divisor on $X$ is given by the set of zeros of a 
quasi-invariant germ, in particular an effective Cartier divisor is defined by a germ in $\cO_X$. Also, 
any two effective 
Weil divisors $D_1=(f_1)$, $D_2=(f_2)$, with $f_1,f_2\in \cO_{X,\zeta_d^k}$, are linearly equivalent, since
$\frac{f_1}{f_2}$ is a meromorphic function. Since in the local context all Cartier divisors are principal,
the divisor class group of $X$ is $\text{Weil}(X)/\text{Cart}(X)$ which in the cyclic case is isomorphic 
to~$\qa_{d,p,q}\cong\ZZ_d$.

Once a primitive root of unity $\zeta_d$ has been fixed, for a Weil divisor $D$ defined as the set of zeros
of a quasi-germ~$f$, we say that the \emph{class} $[D]=[f]$ of $D$ (or $f)$ is $k\in\mathbb{Z}_d$ 
if $f\in\cO_{X,\zeta_d^k}$.

In this context, we will refer to an irreducible Weil divisor $D=(f)$ as \emph{quasi-smooth} if $f$ defines 
a smooth germ in $\CC\{x,y\}$.

Throughout this paper we will prove results about general normal surfaces, projective $V$-surfaces and 
projective cyclic $V$-surfaces. 

\subsection{Weighted projective plane}
\mbox{}

Since it will be extensively used throughout this paper, we will briefly discuss the weighted 
projective plane as an example of projective cyclic $V$-manifold.
Consider $w:=(w_0,w_1,w_2)$ a weight vector, that is, a finite set of pairwise coprime positive
integers. There is a natural action of the multiplicative group $\CC^{*}$ on
$\CC^{3}\setminus\{0\}$ given by
$$
  (x_0,x_1,x_2) \longmapsto (t^{w_0} x_0,t^{w_1} x_1,t^{w_2} x_2).
$$
The set of orbits $\frac{\CC^{3}\setminus\{0\}}{\CC^{*}}$ under this action is denoted by $\PP^2_w$ 
and it is called the {\em weighted projective plane} of type $w$. 
Equivalently, $\PP^2_w$ can be defined starting with $\PP^2$, the classical projective plane, and 
$\qa_w = \qa_{w_0}\times \qa_{w_1}\times \qa_{w_2}$ the product of cyclic groups. Consider the group action
$$
\begin{array}{rcl}
\qa_w \times \PP^2 \quad & \longrightarrow & \quad \PP^2, \\[0.15cm]
\big( (\zeta_{w_0},\zeta_{w_2},\zeta_{w_2}), [X_0:X_1:X_2] \big) & \mapsto &
[\zeta_{w_0}X_0:\zeta_{w_1}X_1:\zeta_{w_2}X_2].
\end{array}
$$
The set of all orbits $\PP^2/\qa_w$ is isomorphic to $\PP^2_w$ and 
the isomorphism is induced by the branched covering
\begin{equation}\label{covering}
   \PP^2\ni [X_0:X_1:X_2] \overset{\phi}{\longmapsto}  [X_0^{w_0}:X_1^{w_1}:X_2^{w_2}]_w \in\PP^2_w.
\end{equation}

Note that this branched covering is unramified over
$$
\PP^2_{w} \setminus \{ [X_0,X_1,X_2]_{w} \mid X_0\cdot X_1\cdot X_2 = 0 \}
$$
and has $\bar{w}=w_0w_1w_2$ sheets. 
Moreover, one can decompose $\PP^2_w = U_0 \cup U_1 \cup U_2$,
where $U_i$ is the open set consisting of all elements $[X_0:X_1:X_2]_w$ with $X_i\neq 0$. The map
$$
  \widetilde{\psi}_2: \CC^2 \longrightarrow U_2,\quad
\widetilde{\psi}_2(x_0,x_1):= [x_0:x_1:1]_w
$$
induces an isomorphism $\psi_2$ replacing $\CC^2$ by $\frac{1}{w_2}(\w_0,\w_1)$.
Analogously, $\frac{1}{w_0}(\w_1,\w_2) \cong U_0$ and $\frac{1}{w_1}(w_2,\w_0) \cong U_1$
under the obvious analytic maps.
This shows that $\PP^2_w$ is a cyclic $V$-manifold. 

The following result justifies why the weights can be assumed to be pairwise coprime.

\begin{prop}[cf.~\cite{Dolgachev82}]
\label{propPw}
Consider $w:=(w_0,w_1,w_2)\in \mathbb{Z}^3_{> 0}$ such that $\gcd(w)=1$ and define $d_0 := \gcd (w_1,w_2)$, 
$d_1 := \gcd (w_0,w_2)$ , $d_2 := \gcd (w_0,w_1)$,
$e_0:= d_1\cdot d_2$,  $e_1:= d_0\cdot d_1$, $e_2:= d_0\cdot d_1$ 
and $\eta_i:=\frac{w_i}{e_i}$, $\eta:=(\eta_0,\eta_1,\eta_2)$. The following map is an isomorphism:
$$
\begin{array}{rcl}
\PP^2_{w} & \longrightarrow & \PP^2_{\eta}, \\[0.15cm]
\,[X_0:X_1:X_2] & \mapsto &
\big[\,X_0^{d_0}:X_1^{d_1}:X_2^{d_2}\,\big].
\end{array}
$$
\end{prop}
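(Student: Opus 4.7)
The plan is to prove $\Phi$ is a bijective morphism between two normal projective varieties and then conclude it is an isomorphism by invoking the fact that a finite bijection onto a normal variety is an isomorphism (Zariski's main theorem). First I would carry out the preparatory arithmetic: using $\gcd(w_0,w_1,w_2)=1$, the integers $d_0,d_1,d_2$ are pairwise coprime (a prime dividing two of them would divide all three $w_i$), so each $e_i$ is a product of coprime divisors of $w_i$ and therefore $e_i\mid w_i$, making the $\eta_i$ positive integers. A similar valuation-theoretic argument shows the $\eta_i$ are pairwise coprime and, crucially, that $\gcd(\eta_i,d_i)=1$ for every $i$.

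For well-definedness of $\Phi$, I would verify equivariance of the two $\CC^*$-actions. The key numerical identity is $w_id_i=D\eta_i$ with $D:=d_0d_1d_2$, which says that the weight-$w$ action of $t\in\CC^*$ on the source corresponds, under the coordinate-wise $d_i$-th power map, to the weight-$\eta$ action of $s=t^D$ on the target. Surjectivity of $\Phi$ is then immediate by extracting $d_i$-th roots of the coordinates.

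The heart of the argument is injectivity. If $\Phi([X]_w)=\Phi([X']_w)$, then $X_i^{d_i}=s^{\eta_i}X_i'^{d_i}$ for some $s\in\CC^*$. Choosing $\tau\in\CC^*$ with $\tau^D=s$ gives $X_i=\zeta_{d_i}^{a_i}\tau^{w_i}X_i'$ for certain integers $a_i$. Setting $u=t/\tau$, finding $t\in\CC^*$ with $X_i=t^{w_i}X_i'$ amounts to solving $u^{w_i}=\zeta_{d_i}^{a_i}$ for all $i$ simultaneously. Parametrizing $u=e^{2\pi i\omega/D}$, this reduces to the system of congruences $\eta_i\omega\equiv a_i\pmod{d_i}$: each is individually solvable because $\gcd(\eta_i,d_i)=1$, and a global solution $\omega\in\ZZ$ exists by the Chinese Remainder Theorem since the $d_i$ are pairwise coprime.

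The main obstacle is the arithmetic bookkeeping in this injectivity step. Without $\gcd(\eta_i,d_i)=1$ the $d_i$-th root-of-unity ambiguities between two preimages could not be absorbed into the source $\CC^*$-action, and this is precisely where the hypothesis $\gcd(w)=1$ is used essentially. The remaining pieces---well-definedness, surjectivity, and observing that $\Phi$ is proper with finite fibers hence finite, so that Zariski's main theorem applies---are straightforward.
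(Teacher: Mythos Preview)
Your argument is correct. The paper does not supply its own proof of this proposition; it is stated with a reference to Dolgachev and used only to justify assuming the weights are pairwise coprime. Your route---checking equivariance via the identity $w_id_i=D\eta_i$, surjectivity by extracting roots, injectivity by absorbing the $d_i$-th root-of-unity ambiguities into the source $\CC^*$-action through the congruences $\eta_i\omega\equiv a_i\pmod{d_i}$ and CRT, and then concluding via Zariski's main theorem---is a clean, self-contained proof. The two arithmetic facts you isolate, pairwise coprimality of the $d_i$ and $\gcd(\eta_i,d_i)=1$, are exactly what is needed and both follow immediately from $\gcd(w_0,w_1,w_2)=1$.

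One small remark: you are implicitly using the correct definitions $e_0=d_1d_2$, $e_1=d_0d_2$, $e_2=d_0d_1$ (so that $e_i\mid w_i$ and $w_id_i=D\eta_i$); the statement in the paper contains an obvious typo in $e_1$. Also, for the injectivity step it is worth noting explicitly that if some coordinate $X_i$ vanishes then so does $X_i'$, and the corresponding congruence is simply dropped---the CRT argument applies to the remaining indices.
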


\begin{remark}
Note that, due to the preceding proposition, one can always assume that the 
vector $(w_0,w_1,w_2)$ is a weight vector, that is, its coordinates are pairwise coprime. 
Note that following a similar argument, $\PP^1_{(w_0,w_1)} \cong \PP^1$.
\end{remark}

\subsection{Embedded \texorpdfstring{$\Q$}{Q}-resolutions}\label{sec:Qres}
\mbox{}

Classically an embedded resolution of $\{f=0\} \subset (\CC^n,0)$ is a proper map $\pi: X \to (\CC^n,0)$ 
from a smooth variety $X$ satisfying, among other conditions, that $\pi^{-1}(\{f=0\})$ is a normal 
crossing divisor. For a weaker notion of resolution one can relax the condition on the preimage of the singularity 
by allowing the ambient space $X$ to be a projective $V$-manifold and the divisor $\pi^{-1}(\{f=0\})$ to 
have ``normal crossings'' in $X$. This notion of $\Q$-normal crossing divisor on 
$V$-manifolds was first introduced by Steenbrink in~\cite{Steenbrink77}.

\begin{defi}
Let $X$ be a cyclic $V$-surface, then a Weil divisor $D$ on $X$ is said to be with {\em $\Q$-normal crossings} 
if for any $P \in X$, there is a local isomorphism $(X,P) \simeq \frac{1}{d}(p,q)$ such that the image of
$(D,P) \subset (X,P)$ is given by local equations $\{x^{i} y^{j} = 0 \}\subset \frac{1}{d}(p,q)$, $i,j\geq 0$.
If in addition, the image of each coordinate corresponds to a different global irreducible component we say
$D$ is a {\em simple $\Q$-normal crossing} divisor.
\end{defi}

\begin{ex}
\label{ex:QNC}
As mentioned above for $X_1:=\frac{1}{3}(1,1)$ one can check that $\cO_{X_1}$ is the $\CC$-algebra generated
by $\{1,x^3,y^3,x^2y,xy^2\}$, whereas $\cO_{X_1,\zeta^1}=\langle x,y\rangle\cO_{X_1}$ and
$\cO_{X_1,\zeta^2}=\langle x^2,xy,y^2\rangle\cO_{X_1}$. The germ $\{xy=0\}$ defines a $\Q$-normal crossing
divisor $D=(xy)$ whose class $[D]$ in $\Weil(X_1)/\Cart(X)$ is 2 and is a union of quasi-smooth divisors 
$D=D_1+D_2$ with~$[D_1]=[D_2]=1$.

Analogously, for $X_2:=\frac{1}{3}(1,2)$ one has $\cO_{X_2}=\langle 1,x^3,y^3,xy\rangle$, 
$\cO_{X_2,\zeta^1}=\langle x,y^2\rangle\cO_{X_2}$,
$\cO_{X_2,\zeta^2}=\langle x^2,y\rangle\cO_{X_2}$. 
The germ $\{xy=0\}$ defines a $\Q$-normal crossing divisor $D=(xy)\in \Cart(X)$ and it is a union of quasi-smooth 
divisors $D=D_1+D_2$ with~$[D_1]=1$, $[D_2]=2$.
\end{ex}

\begin{defi}\label{Qresolution}
Let $X$ be a cyclic $V$-surface. Consider $D \subset X$ an analytic 
subvariety of codimension one. An embedded $\Q$-resolution of $D \subset X$ 
is a proper analytic map~$\pi: Y \to X$ such that:
\begin{enumerate}
\item $Y$ is a cyclic $V$-manifold,
\item $\pi$ restricted to $Y\setminus \pi^{-1}(\sing(D))$ is an isomorphism, and
\item $\pi^{-1}(D)$ is a $\Q$-normal crossing divisor on $Y$.
\end{enumerate}
\end{defi}

Usually one uses weighted or toric blow-ups with smooth center as a tool for finding embedded 
$\Q$-resolutions. Here we will briefly discuss weighted blow-ups in the surface case   
$\pi: \widehat{X} \to X$ at a point $P\in X=\frac{1}{d}(p,q)$ with respect to $w = (a,b)$. 
Consider
\[
\hat X:=\{((x,y),[u:v]_w)\in \CC^2\times\PP^1_w\mid x=t^a u, y=t^b v \text{ for some } t\in \CC\}/\qa_{d,p,q}
\]
and $\pi: \widehat{X} \to X$ the projection map on the first coordinate, as usual. For practical 
reasons, we will give a specific description of $\hat X$ as a cyclic $V$-surface. 
The new space $\hat X$ is covered by two open sets $\widehat{U}_1 \cup \widehat{U}_2$ each one of them 
is a quotient singularity of type:
\begin{equation}
\label{eq:charts} 
\array{ccc}
\widehat{U}_1 \cong
X \left( \displaystyle\frac{ad}{e}; 1, \frac{-b+p' aq}{e} \right)
& \text{ and } &
\widehat{U}_2 \cong 
X\left( \displaystyle\frac{bd}{e}; \frac{-a+q' bp}{e}, 1\right),
\endarray
\end{equation}
where $p'p=q'q\equiv 1 \mod (d)$ and $e := \gcd(d,aq-bp)$. Finally, the charts are given by

\begin{center}
$\begin{array}{c|c}
X \left( \displaystyle\frac{ad}{e}; 1, \frac{-b+p' aq}{e} \right)  \ \longrightarrow \ 
\widehat{U}_1, &X \left( \displaystyle\frac{bd}{e}; \frac{-a+q' bp}{e}, 1
\right) \ \longrightarrow \ \widehat{U}_2, \\[0.2cm] \,\big[ (x^e,y) \big] \mapsto 
\big[ ((x^a,x^b y),[1:y]_{w}) \big]_{(d;p,q)} & \big[ (x,y^e) \big] \mapsto 
\big[ ((x y^a, y^b),[x:1]_{w}) \big]_{(d;p,q)}.
\end{array}$
\end{center}

\subsection{Intersection theory on projective \texorpdfstring{$V$}{V}-surfaces}
\mbox{}

In this section we will briefly recall the definitions of intersection theory on projective $V$-surfaces.
We denote by $\Weil_{\QQ}(X)$ the vector space $\Weil(X)\otimes \QQ$ and refer to their elements as $\QQ$-divisors. 
As a general reference for intersection theory in normal varieties one can use~\cite{Sakai84,Fulton-Intersection}, however 
in the particular setting of $V$-surfaces we will follow the notation given in~\cite{AMO-Intersection}.
For the sake of completeness, we will briefly present the classical definitions of intersection multiplicity
of two divisors on a $V$-surface assuming the theory on smooth surfaces.

\begin{defi}
\label{def:multintVM}
Let $X$ be a $V$-surface and consider $D_1, D_2 \in \Cart(X)$ Cartier divisors.
The \emph{intersection number} $(D_1 \cdot D_2)_X$ is defined as 
\begin{equation}
\label{eq:MI} 
(D_1 \cdot D_2)_X:=(\pi^*(D_1)\cdot \pi^*(D_2))_Y,
\end{equation}
where $\pi:Y\to X$ is a resolution of $X$, whenever the right-hand side of~\eqref{eq:MI} is well defined.

If $D_1, D_2 \in \Weil(X)$ are irreducible divisors such that $k_1D_1, k_2D_2\in \Cart(X)$ and 
either $D_1$ is compact or $D_1\cap D_2$ is finite, then  
$$(D_1 \cdot D_2)_X :=\frac{1}{k_1 k_2} (k_1 D_1 \cdot k_2 D_2 )_X.$$
By bilinearity and tensoring by $\QQ$ these definitions are extended to any two 
rational divisors on~$X$.
\end{defi}

Note that if $D_1, D_2 \in \Weil(X)$ are effective and $D_1\cap D_2$ is finite, 
then the following formula holds
$$
(D_1 \cdot D_2)_X =\sum_{P\in X}\frac{1}{k_1 k_2} (k_1 D_1 \cdot k_2 D_2 )_P,
$$
where $(k_1D_1 \cdot k_2D_2)_P:=\dim_\CC \frac{\cO_{X,P}}{(f_1,f_2)}$ and $k_1 D_1$
(resp. $k_2 D_2$) is the support of a germ~$f_1\in \cO_{X,P}$ (resp. $f_2\in \cO_{X,P}$).

\begin{theorem}
\label{thm:intersection}
Let $F : Y \to X$ be a proper dominant morphism between two irreducible $V$-surfaces, and
$D_1, D_2 \in \Weil_\QQ(X)$.
\begin{enumerate}[label=\rm(\arabic*)]
 \item\label{item:1}
 The cardinal of $F^{-1}(P)$, $P \in X$ generic, is a finite constant. This finite number is denoted by $\deg(F)$.
 \item 
 If $(D_1 \cdot D_2)_X$ is defined, then so is the number $(F^*(D_1) \cdot F^*(D_2))_Y$. In that case
 $(F^*(D_1) \cdot F^*(D_2))_Y=\deg(F)(D_1 \cdot D_2)_X$.
 \item 
 If $(D_1 \cdot D_2)_P$ is defined for some generic $P \in X$ as in~\ref{item:1}, then so is $(F^*(D_1) \cdot F^*(D_2))_Q$, 
 $\forall Q \in F^{-1}(P)$, and 
 $\sum_{Q\in F^{-1}(P)}(F^*(D_1)\cdot F^*(D_2))_Q=\deg(F)(D_1\cdot D_2)_P$.
\end{enumerate}
\end{theorem}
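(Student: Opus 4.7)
The strategy is to reduce all three parts to classical intersection theory on smooth surfaces by pulling everything back to a compatible simultaneous resolution and invoking the projection formula together with the multiplicativity of lengths under finite flat extensions. For part~\ref{item:1}, since $F$ is proper and dominant between two irreducible normal $2$-dimensional varieties, the generic fiber is $0$-dimensional and properness forces every fiber to be finite; upper semicontinuity of fiber dimension together with generic flatness yields a Zariski-dense open $U\subset X$ over which $F$ is finite and flat, on which the fiber cardinality is locally constant, hence constant, and is by definition $\deg(F)$.

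For part~(2), bilinearity reduces the problem to $D_1,D_2\in\Cart(X)$. Fix a resolution $\pi_X\colon \widetilde X\to X$, take the irreducible component of $Y\times_X\widetilde X$ that dominates $Y$, and resolve it to obtain a smooth surface $\widetilde Y$ with a birational map $\pi_Y\colon \widetilde Y\to Y$ and a proper dominant morphism $\widetilde F\colon\widetilde Y\to\widetilde X$ fitting into a commutative square. Since both $\pi_X$ and $\pi_Y$ are birational, $\deg(\widetilde F)=\deg(F)$. The definition in~\eqref{eq:MI} together with commutativity gives
\[
(F^*D_1\cdot F^*D_2)_Y = (\widetilde F^*\pi_X^*D_1\cdot \widetilde F^*\pi_X^*D_2)_{\widetilde Y},
\]
and the classical projection formula on the smooth surface $\widetilde Y$ turns the right-hand side into $\deg(\widetilde F)\,(\pi_X^*D_1\cdot \pi_X^*D_2)_{\widetilde X}=\deg(F)\,(D_1\cdot D_2)_X$.

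For part~(3), one chooses a generic $P\in X$ as in part~\ref{item:1}, so that $P$ and all points $Q\in F^{-1}(P)$ lie in the smooth loci of $X$ and $Y$ and $D_1,D_2$ are Cartier near $P$ and meet properly. Then the local intersection multiplicity equals the length $\dim_\CC \cO_{X,P}/(f_1,f_2)$, and the claim follows from the finiteness of the extension $\cO_{X,P}\to \bigoplus_{Q\in F^{-1}(P)}\cO_{Y,Q}$ of generic degree $\deg(F)$ together with the standard multiplicativity of lengths under finite flat extensions, which yields
\[
\sum_{Q\in F^{-1}(P)}\dim_\CC \cO_{Y,Q}/(f_1\circ F,f_2\circ F) = \deg(F)\cdot\dim_\CC \cO_{X,P}/(f_1,f_2).
\]
The principal obstacle is the construction of the commutative resolution square above with $\deg(\widetilde F)=\deg(F)$ and $\widetilde Y$ smooth; once this diagram is in place the projection-formula computations are immediate. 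A minor additional subtlety is that $F^*D$ for a $\Q$-Weil divisor has to be interpreted via $F^*(kD)$ with $kD\in\Cart(X)$, whose consistency is again guaranteed by the same resolution picture.
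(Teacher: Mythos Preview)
The paper does not supply a proof of this theorem: it is stated in the preliminaries section as a recalled result, with the surrounding text pointing to \cite{AMO-Intersection}, \cite{Sakai84}, and \cite{Fulton-Intersection} for background on intersection theory on $V$-surfaces. So there is no ``paper's proof'' to compare against.

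Your plan is sound and is essentially the standard route. A couple of small remarks. First, in part~(2) you should make explicit that if $D_1,D_2\in\Cart(X)$ then $F^{*}D_1,F^{*}D_2\in\Cart(Y)$, so that Definition~\ref{def:multintVM} applies on the $Y$-side with the resolution $\pi_Y$ you build; after that, $\pi_Y^{*}F^{*}D_i=\widetilde F^{*}\pi_X^{*}D_i$ by commutativity and the smooth projection formula finishes the job exactly as you say. Second, the classical projection formula you invoke on $\widetilde Y\to\widetilde X$ requires only that $\widetilde F$ be proper and generically finite between smooth surfaces and that the intersection number on $\widetilde X$ be defined; since you already assume $(D_1\cdot D_2)_X$ is defined, this is in order. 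Third, in part~(3) your argument is fine for a \emph{generic} $P$ (smooth points, proper intersection, flatness), which is precisely what the statement asks; just be careful not to overclaim it for arbitrary $P$. With these clarifications your outline constitutes a complete proof.
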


\begin{ex}
\label{ex:QNC:intersection}
Following Example~\ref{ex:QNC}, note that $3D_1, 3D_2\in \Cart(X_1)$ and hence, according to Definition~\ref{def:multintVM},
one has $(3D_1\cdot 3D_2)_{X_1}=\dim_\CC\frac{\cO_{X_1}}{(x^3,y^3)}=\dim_\CC \langle 1,x^2y,xy^2\rangle_\CC=3$. 
Hence $(D_1\cdot D_2)_{X_1}=\frac{1}{3}$.

Analogously one can check $(3D_1\cdot 3D_2)_{X_2}=\dim_\CC \langle 1,xy,x^2y^2\rangle_\CC=3$, and thus also
$(D_1\cdot D_2)_{X_2}=\frac{1}{3}$.
\end{ex}

\begin{prop}\label{formula_self-intersection}
Let $X$ be a cyclic $V$-surface. Let $\pi: \hat{X} \to X$ be the weighted blow-up at a point $P$ 
of type $\frac{1}{d}(p,q)$ with respect to $w=(a,b)$. Assume $(d,p)=(d,q)=(a,b)=1$ and write $e = \gcd(d,aq-bp)$.

Consider $C$ and $D$ two $\QQ$-divisors on $X$, denote by $E$ the exceptional divisor of $\pi$, 
and by $\widehat{C}$ (resp.~$\widehat{D}$) the strict transform of $C$ (resp.~$D$). Let $\nu_{C,P}$ 
(resp.~$\nu_{D,P}$) be the $w$-multiplicity of $C$ (resp $D$) at $P$, (defined such that 
$\nu_{x,P}=a$ and $\nu_{y,P}=b$). Then the following equalities hold:
\begin{enumerate}[label=\rm(\arabic*)]
 \item\label{formula_self-intersection1} 
 $\displaystyle \pi^{*}(C) = \widehat{C} + \frac{\nu_{C,P}}{e} E$,
 \item\label{formula_self-intersection2} 
 $\displaystyle (E \cdot \widehat{C})_{\hat X} = \frac{e \nu_{C,P}}{a b d}$,
 \item\label{formula_self-intersection3} 
 $\displaystyle (C \cdot D)_P=\widehat{C} \cdot \widehat{D}+ \frac{\nu_{C,P}\nu_{D,P}}{a b d}$,
 \item\label{formula_self-intersection4} 
 $\displaystyle (E \cdot E)_{\hat X}=-\frac{e^2}{dab}$.
\end{enumerate}
\end{prop}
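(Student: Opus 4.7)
The plan is to establish \ref{formula_self-intersection1} and \ref{formula_self-intersection4} first, and then derive \ref{formula_self-intersection2} and \ref{formula_self-intersection3} formally from them using the projection formula and the fact that $\pi$ is birational.

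For \ref{formula_self-intersection1}, I would work in the first chart of \eqref{eq:charts}. Forgetting the cyclic quotient of order $ad/e$, that chart map is the classical weighted blowup $(x,y)\mapsto((x^a,x^b y),[1:y]_w)$ of $\CC^2$. A local defining germ $f=\sum c_{ij}x^iy^j$ of $C$ at $P$ with $w$-multiplicity $m=\nu_{C,P}$ pulls back to $f(x^a,x^by)=x^m\tilde f(x,y)$ with $\gcd(x,\tilde f)=1$, so the upstairs chart pullback reads $\pi^{*}C=m\{x=0\}+\widehat C$. The key identification to justify is that $E$ corresponds to the Weil divisor $e\{x=0\}$ on the $V$-surface $X(ad/e;1,(-b+p'aq)/e)$; this is exactly what is encoded by the $(x^e,y)$ appearing in the description of the chart map in \eqref{eq:charts}, and it is precisely this identification which produces the factor $1/e$ in the statement. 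The second chart is handled symmetrically.

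For \ref{formula_self-intersection4}, apply \ref{formula_self-intersection1} to the divisor $(x)$ at $P$, for which $\nu_{(x),P}=a$, obtaining $\pi^{*}(x)=\widehat{(x)}+(a/e)E$. Since $\pi$ is birational and contracts $E$ to a point, the projection formula gives $(\pi^{*}(x)\cdot E)_{\hat X}=0$, hence
\[
(\widehat{(x)}\cdot E)_{\hat X}=-\tfrac{a}{e}(E\cdot E)_{\hat X}.
\]
On the other hand, $(\widehat{(x)}\cdot E)_{\hat X}$ can be computed directly in the second chart of \eqref{eq:charts}, where $\widehat{(x)}$ and $E$ are defined upstairs by $\{x=0\}$ and $\{y=0\}$ and meet transversally at the origin; a look at the first chart shows this is the unique intersection point. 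Applying Theorem~\ref{thm:intersection} to the degree-$bd/e$ quotient gives $(\widehat{(x)}\cdot E)_{\hat X}=e/(bd)$, and solving the previous equation for $(E\cdot E)_{\hat X}$ yields \ref{formula_self-intersection4}.

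Formula \ref{formula_self-intersection2} then drops out immediately: the same projection-formula argument applied to a general $C$ gives $(\widehat C\cdot E)_{\hat X}=-(\nu_{C,P}/e)(E^2)_{\hat X}=e\nu_{C,P}/(abd)$. For \ref{formula_self-intersection3}, use Theorem~\ref{thm:intersection} with $\deg\pi=1$ to write $(C\cdot D)_X=(\pi^{*}C\cdot\pi^{*}D)_{\hat X}$, expand the right-hand side using \ref{formula_self-intersection1}, and substitute \ref{formula_self-intersection2} and \ref{formula_self-intersection4}; the three cross terms involving $E$ collapse to the single term $\nu_{C,P}\nu_{D,P}/(abd)$, and separating the local contribution at points over $P$ from intersections of $\widehat C$ and $\widehat D$ away from $E$ produces the stated local formula. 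I expect the main obstacle to be the careful bookkeeping in \ref{formula_self-intersection1}, namely identifying $E$ as $e$ times the reduced coordinate divisor in each chart rather than as the reduced divisor itself; once this is pinned down, the remaining statements follow by the projection formula and elementary algebra.
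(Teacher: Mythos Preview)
The paper does not actually prove this proposition; it is stated without proof in the preliminaries (\S\ref{sec:settings}), with~\cite{AMO-Intersection} given as the general reference for intersection theory on abelian-quotient $V$-surfaces. So there is no ``paper's own proof'' to compare against.

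That said, your plan is sound and is essentially the argument one finds in~\cite{AMO-Intersection}. The logical order you propose --- establish~\ref{formula_self-intersection1} from the explicit chart description~\eqref{eq:charts}, read off $E^2$ by pairing $E$ against the pullback of a coordinate axis and using $(\pi^{*}(\cdot)\cdot E)_{\hat X}=0$, then derive~\ref{formula_self-intersection2} and~\ref{formula_self-intersection3} formally from the projection formula and bilinearity --- is the standard one, and your algebra in the last two steps is correct. You have also put your finger on the only genuinely delicate point: the factor $1/e$ in~\ref{formula_self-intersection1} comes from the reparametrization $x\mapsto x^e$ (resp.\ $y\mapsto y^e$) built into the chart maps of~\eqref{eq:charts}, and one must check carefully that the reduced exceptional divisor in the chart quotient corresponds to $e$ copies of the coordinate axis in the auxiliary $\CC^2$ upstairs. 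Once that identification is made, everything else is the routine computation you outline. One small caution for~\ref{formula_self-intersection3}: the statement is local at $P$, so after expanding $(\pi^{*}C\cdot\pi^{*}D)_{\hat X}$ you should explicitly separate the contribution over $P$ from the intersections of $\widehat C$ and $\widehat D$ away from the exceptional locus, rather than leaving this implicit.
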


\begin{ex}
\label{ex:QNC:resolution}
Consider $\pi_1$ the blow-up with $w=(1,1)$ of $X_1$ from Example~\ref{ex:QNC}, 
using~Proposition~\ref{formula_self-intersection} one obtains
$\pi_1^*D_1=\hat D_1 + \frac{1}{3} E$, $(\hat D_i\cdot E)_{\hat X_1}=1$, $(\hat D_1\cdot \hat D_2)_{\hat X_1}=0$,
and $(E\cdot E)_{\tilde X_1}=-3$. Moreover, by~\eqref{eq:charts} $\tilde X_1$ is a smooth surface.
\end{ex}

\subsection{Rational divisors on normal surfaces}
\mbox{}

Throughout this section we will assume $X$ is a reduced, irreducible, complex projective variety of 
dimension 2. Given any $D=\sum \alpha_iC_i \in \Weil_{\QQ}(X)$, where $C_i$ are pairwise distinct irreducible
curves and $\alpha_i\in \QQ$ we use the following notation:
$$
\array{l}
\left\lfloor D\right\rfloor=\sum \lfloor \alpha_i\rfloor C_i,\\
\left\lceil D\right\rceil=\sum \lceil \alpha_i\rceil C_i.\\
\endarray
$$
If $\pi:Y\to X$ is a $\Q$-resolution of $X$, then the total transform of $D$ by $\pi$ can be written as 
$$\pi^*D=\hat D + \sum_{\v\in \Gamma} m_\v E_\v,$$
where $\Gamma$ is the dual graph of the $\Q$-resolution, $\hat D$ the strict transform of~$D$, and 
$m_\v\in \QQ$ are uniquely determined by a solution to the linear system 
\begin{equation}
\label{eq:pullback}
(E_\v\cdot \pi^*D)_Y=0 \text{ for all } \v\in \Gamma.
\end{equation}
Note that~\eqref{eq:pullback} matches Theorem~\ref{thm:intersection}.

\begin{ex}
\label{ex:QNC:pullback}
Following Example~\ref{ex:QNC:resolution} note that $\pi_1^*D_1=\hat D_1+mE$ and the equation 
$E\cdot \pi_1^*D_1=E\cdot \hat D_1+mE^2=1-3m=0$ matches the solution $m=\frac{1}{3}$ given by Theorem~\ref{thm:intersection}.
\end{ex}

Given $D\in \Weil(X)$, the sheaf $\cO_X(D)$ is defined as $i_*\cO_{\reg(X)}(D|_{\reg(X)})$ via the 
inclusion $i:\reg(X)\to X$ of the regular part of $X$. In the general case $D\in \Weil_\QQ(X)$ one defines
\begin{equation}
\label{eq:OD}
\cO_X(D):=\cO_X(\lfloor D \rfloor).
\end{equation}
As a word of caution, note that this notation differs from the one used in~\cite{Nemethi-Poincare}.
Using this definition the following projection formula is obtained.

\begin{theorem}[{Projection formula~\cite[Thm. 2.1]{Sakai84}}]
\label{thm:projection}
If $D\in \Weil_\QQ(X)$ and $\pi:Y\to X$ is a resolution of $X$, then
$$
\pi_*(\cO_Y(\pi^*D))=\cO_X(D).
$$
\end{theorem}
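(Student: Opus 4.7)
The plan is to verify the equality of sheaves on $X$ by comparing sections over an arbitrary open $U\subset X$. By normality of $X$ and the fact that $\pi$ is birational, pullback identifies rational functions on $\pi^{-1}(U)$ with rational functions on $U$, so the question reduces to comparing the divisorial inequalities that define the two sides: a section of $\cO_X(\lfloor D\rfloor)(U)$ is a rational function $f$ on $U$ satisfying $\ddiv(f)+\lfloor D\rfloor\ge 0$ (imposed on $\reg(X)\cap U$ and propagated to $U$ by normality), whereas a section of $\pi_*\cO_Y(\pi^*D)(U)=\cO_Y(\lfloor \pi^*D\rfloor)(\pi^{-1}(U))$ is a rational function $g$ on $\pi^{-1}(U)$ satisfying $\ddiv(g)+\lfloor \pi^*D\rfloor\ge 0$.

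For the inclusion $\pi_*\cO_Y(\lfloor \pi^*D\rfloor)\subseteq \cO_X(\lfloor D\rfloor)$ I would write $g=\pi^*f$ and restrict the divisorial inequality to each strict transform $\hat C_i$: since $\ord_{\hat C_i}(\pi^*f)=\ord_{C_i}(f)$ and the coefficient of $\hat C_i$ in $\lfloor \pi^*D\rfloor$ is $\lfloor \alpha_i\rfloor$, one obtains $\ord_{C_i}(f)+\lfloor \alpha_i\rfloor\ge 0$, placing $f$ in $\cO_X(\lfloor D\rfloor)(U)$. The reverse inclusion is more delicate. Starting from $\ddiv(f)+\lfloor D\rfloor\ge 0$ on $U$, pulling back this effective integer Weil divisor yields $\ddiv(\pi^*f)+\pi^*\lfloor D\rfloor\ge 0$ on $\pi^{-1}(U)$. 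Along each strict transform the required bound is immediate, so only the exceptional divisors demand attention. For each such $E_\v$, writing $m_\v:=\ord_{E_\v}(\pi^*D)$, one first obtains $\ord_{E_\v}(\pi^*f)\ge -\ord_{E_\v}(\pi^*\lfloor D\rfloor)$; crucially, $\ord_{E_\v}(\pi^*f)\in\ZZ$ as the valuation of a rational function along a prime divisor, hence $\ord_{E_\v}(\pi^*f)\ge \lceil -\ord_{E_\v}(\pi^*\lfloor D\rfloor)\rceil$. Finally, the decomposition $\pi^*D=\pi^*\lfloor D\rfloor+\pi^*\fpart{D}$ combined with the effectiveness of $\pi^*\fpart{D}$ shows that $\ord_{E_\v}(\pi^*\lfloor D\rfloor)\le m_\v$, so monotonicity of $\lceil\cdot\rceil$ produces $\ord_{E_\v}(\pi^*f)\ge \lceil -m_\v\rceil=-\lfloor m_\v\rfloor$, which is exactly the condition needed along $E_\v$.

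The main obstacle lies precisely in this exceptional-divisor analysis. The naive approach of trying to establish $\lfloor \pi^*D\rfloor\ge \pi^*\lfloor D\rfloor$ componentwise fails, because $\pi^*\lfloor D\rfloor$ may acquire non-integer multiplicities along the $E_\v$ (this is exactly the phenomenon illustrated in Example~\ref{ex:QNC:pullback}), and $\lfloor x+y\rfloor\ge x$ can be false for non-integer $x$ even when $y\ge 0$. The correct maneuver---which also justifies the convention $\cO_Y(\pi^*D):=\cO_Y(\lfloor \pi^*D\rfloor)$ adopted in~\eqref{eq:OD}---is to first invoke integrality of $\ord_{E_\v}(\pi^*f)$ in order to pass to the ceiling, and only afterwards use the effectiveness of $\pi^*\fpart{D}$ to substitute $\pi^*D$ for $\pi^*\lfloor D\rfloor$ inside the ceiling bound.
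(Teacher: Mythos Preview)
Your argument is correct. Note, however, that the paper does not supply its own proof of this statement: Theorem~\ref{thm:projection} is simply quoted from Sakai~\cite[Thm.~2.1]{Sakai84}, so there is no in-paper proof to compare against.

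That said, your proof is entirely in the spirit of the paper. The crucial ingredient you invoke---that the pull-back of an effective (integer) Weil divisor under a resolution is again effective, which follows from negative-definiteness of the exceptional intersection matrix together with non-negativity of its off-diagonal entries---is exactly the fact the paper isolates and uses in the proof of the very next result, Proposition~\ref{prop:H0D}. Your handling of the exceptional divisors via the integrality of $\ord_{E_\v}(\pi^*f)$ and the monotone passage from $\pi^{*}\lfloor D\rfloor$ to $\pi^{*}D$ is clean; one could even streamline it slightly by postponing the ceiling to the last step, writing
\[
\ord_{E_\v}(\pi^{*}f)\ \ge\ -\ord_{E_\v}(\pi^{*}\lfloor D\rfloor)\ \ge\ -m_\v,
\]
and then using integrality once to obtain $\ord_{E_\v}(\pi^{*}f)\ge\lceil -m_\v\rceil=-\lfloor m_\v\rfloor$.
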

This shows how $\cO_X(D)$ can be interpreted via the divisor $D$.

\begin{prop}\label{prop:H0D}
Let $X$ be a normal surface and $D \in \Weil_{\QQ}(X)$. Then,
the cohomology $H^ 0(X, \cO_X(D))$ can be identified with
$\{ h \in K(X) \mid (h) + D \geq 0 \}$.
\end{prop}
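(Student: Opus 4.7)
The plan is to reduce to the classical (integral Cartier/Weil) case on the smooth locus by unpacking the definition $\cO_X(D)=\cO_X(\lfloor D\rfloor)=i_*\cO_{\reg(X)}(\lfloor D\rfloor|_{\reg(X)})$ and then translating an inequality with floors back to one with rational coefficients via a short arithmetic observation.

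First I would compute global sections directly from the definition:
\[
H^0(X,\cO_X(D)) \;=\; H^0(X, i_*\cO_{\reg(X)}(\lfloor D\rfloor|_{\reg(X)}))\;=\;H^0(\reg(X),\cO_{\reg(X)}(\lfloor D\rfloor|_{\reg(X)})).
\]
Since $\reg(X)$ is a smooth (quasi\nobreakdash-projective) surface, the classical identification for integral Weil divisors applies and yields
\[
H^0(\reg(X),\cO_{\reg(X)}(\lfloor D\rfloor|_{\reg(X)})) \;=\; \{h\in K(\reg(X)) \mid (h) + \lfloor D\rfloor|_{\reg(X)} \ge 0\}.
\]
Because $\reg(X)$ is dense in the irreducible variety $X$, function fields agree: $K(\reg(X))=K(X)$.

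Next I would pass from the smooth locus back to $X$. Normality of the surface $X$ forces $\sing(X)$ to have codimension~$2$, so restriction gives a bijection between prime Weil divisors on $X$ and prime Weil divisors on $\reg(X)$, and orders of meromorphic functions along such primes are preserved. Hence the inequality $(h)+\lfloor D\rfloor\ge 0$ on $\reg(X)$ is equivalent to the same inequality on $X$.

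Finally, the floor trick: write $D=\sum_C\alpha_C C$ with $\alpha_C\in\QQ$ and note that $\ord_C(h)\in\ZZ$ for any $h\in K(X)^*$. Then
\[
\ord_C(h)+\alpha_C\ge 0 \;\Longleftrightarrow\; \ord_C(h)\ge \lceil -\alpha_C\rceil = -\lfloor \alpha_C\rfloor \;\Longleftrightarrow\; \ord_C(h)+\lfloor \alpha_C\rfloor\ge 0,
\]
so $(h)+D\ge 0$ and $(h)+\lfloor D\rfloor\ge 0$ cut out the same subset of $K(X)$. Chaining the three equivalences gives the stated identification.

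There is no serious obstacle here; the statement is essentially a bookkeeping exercise. The only point that deserves care is the second step, where one must invoke normality (rather than smoothness) to legitimately identify Weil divisors on $X$ with those on $\reg(X)$, and to be sure that the convention~\eqref{eq:OD} is being applied consistently with the reference~\cite{Sakai84} (as the text itself warns, conventions differ from~\cite{Nemethi-Poincare}).
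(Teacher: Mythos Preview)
Your argument is correct, and it is genuinely different from the paper's. You work directly from the definition $\cO_X(D)=i_*\cO_{\reg(X)}(\lfloor D\rfloor)$, use the tautology $H^0(X,i_*\cF)=H^0(\reg(X),\cF)$, and then exploit that on a normal surface the prime divisors on $X$ and on $\reg(X)$ are in bijection. The paper instead pulls everything up through a resolution $\pi:\tilde X\to X$, invokes Sakai's projection formula $\cO_X(D)=\pi_*\cO_{\tilde X}(\lfloor\pi^*D\rfloor)$ to identify $H^0$ on $X$ with $H^0$ on the smooth $\tilde X$, and then must argue separately that the conditions along the exceptional divisors are automatic (via negative-definiteness of the exceptional intersection matrix, which forces pull-backs of effective divisors to be effective). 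Your route is shorter and avoids both the projection formula and the linear-algebra step on the exceptional locus; the paper's route has the mild advantage of staying in the resolution framework used throughout the rest of the article, but for this particular statement your approach is the more economical one.
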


\begin{proof}
The statement of this result is true for an integral divisor on a smooth variety.
Let $\pi: \tilde{X} \to X$ be a resolution of $X$ consisting in a composition of blowing-ups.
Then, by~\cite[Theorem 1.2]{Sakai84}, 
$\cO_X(D) = \pi_{*} \cO_{\tilde{X}}(\pi^{*} D) = \pi_{*} \cO_{\tilde{X}}(\floor{\pi^{*} D})$
and thus $H^0 (X,\cO_X(D)) = H^0(\tilde{X},\cO_{\tilde{X}}(\floor{\pi^{*} D}))$. Since the latter
is an integral divisor in a smooth variety,
$$
H^0(\tilde{X},\cO_{\tilde{X}}(\floor{\pi^{*} D})) = \{ \tilde h \in K(\tilde{X}) \mid (\tilde h) + \floor{\pi^* D} \geq 0 \}.
$$
The condition $(\tilde h) + \floor{\pi^* D} \geq 0$ is equivalent to $(\tilde h) + \pi^{*} D \geq 0$ 
because $(\tilde h)$ is an integral divisor. Writing $\tilde h = \pi^{*} h$ where $h \in K(X)$, 
one can split the previous condition $(\pi^{*} h) + \pi^{*} D \geq 0$ into two different ones,
\begin{equation}\label{eq:global_sections}
(h) + D \geq 0 \quad \text{ and} \quad \ord_E \left( (\pi^{*} h) + \pi^{*} D \right) \geq 0,
\end{equation}
for all $E$ exceptional divisors of $\pi$.

Note that if $A = (a_{ij})_{i,j}$ is a negative-definite real matrix such that $a_{ij} \geq 0$, $i\neq j$,
then all entries of $-A^{-1}$ are non-negative. This implies that the pull-back of an effective divisor
is also an effective divisor. Hence one can easily observe that the second condition in~\eqref{eq:global_sections}
follows from the first one and the proof is complete.
\end{proof}

Finally, we will define the concept of linear equivalence for $\QQ$-divisors. Given $D_1, D_2\in \Weil_\QQ(X)$
we say $D_1$ is {\em linearly equivalent} to $D_2$ and denote it by $D_1\sim D_2$ if $D_1-D_2$ is a principal 
divisor defined by a meromorphic function on $X$. Note that:
\begin{equation}
D_1\sim D_2 \Leftrightarrow 
\begin{cases}
D_1-D_2 \text{ is an integer divisor and}\\
\cO_X(D_1)\cong \cO_X(D_2).
\end{cases}
\end{equation}

\begin{ex}
\label{ex:QNC:LEquiv}
Following Example~\ref{ex:QNC:pullback} consider $D_{11}=\frac{1}{3}D_1$ and $D_{12}=\frac{2}{3}D_1$. 
Note that $D_1\sim D_2$ since $\frac{x}{y}$ is a meromorphic function on $X_1$. However
$D_{11}\not\sim D_{12}$ despite $\cO_{X_1}(D_{11})=\cO_{X_1}(D_{12})=\cO_{X_1}.$ 
The reason here is that $D_{11}-D_{12}=\frac{1}{3}D_1-\frac{1}{3}D_2$ is not a Weil divisor.

Also, note that $\hat D_1 + \lambda E\sim \hat D_2+\lambda E$ for any $\lambda\in \QQ$ since the 
difference is a Weil divisor and $\hat D_1- \hat D_2$ is defined by composing $\frac{x}{y}$ with the 
blow-up~$\pi_1$.  

Finally, in $X_2$ the two quasi-smooth germs $D_1$ and $D_2$ are not linearly equivalent since $D_1\sim D_2$ 
implies that $[D_1]=[D_2]$ however $[D_1]=1$ and $[D_2]=2$.
\end{ex}

\subsection{Riemann-Roch formula for normal surfaces}\label{sec:RR} 
\mbox{}

For a given $X$ a normal projective surface and $D\in \Weil(X)$, the following formula is a generalization 
of the classical Riemann-Roch formula from the smooth case.

\begin{theorem}[\cite{Blache95}]
\label{thm:RR}
There is a rational map $R_{X,P}:\Weil(X,P)/\Cart(X,P)\to \QQ$ for each singular point $P\in \sing(X)$ such that 
$$\chi(\cO_X(D))-\chi(\cO_X)=\frac{1}{2}D\cdot (D-K_X) + \sum_{P\in \sing(X)} R_{X,P}(D).$$
\end{theorem}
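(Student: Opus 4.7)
The plan is to reduce Blache's singular Riemann--Roch to the classical Riemann--Roch on a resolution. Fix a resolution $\pi: Y \to X$ with exceptional divisors $\{E_\v\}_{\v\in \Gamma}$ above $\sing(X)$, and write $K_Y = \pi^{*} K_X + \sum_{\v} a_\v E_\v$, where the discrepancies $a_\v \in \QQ$ are determined by the linear system~\eqref{eq:pullback}. Applying the classical Riemann--Roch to the integer divisor $\lfloor \pi^{*} D \rfloor$ on the smooth surface $Y$ gives
\[
\chi(\cO_Y(\lfloor \pi^{*} D \rfloor)) - \chi(\cO_Y) = \tfrac12 \lfloor \pi^{*} D\rfloor \cdot \bigl(\lfloor \pi^{*} D\rfloor - K_Y\bigr).
\]

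Next I would transfer Euler characteristics down to $X$. By Theorem~\ref{thm:projection}, $\pi_{*} \cO_Y(\lfloor \pi^{*} D\rfloor) = \cO_X(D)$ and $\pi_{*} \cO_Y = \cO_X$, so the Leray spectral sequence (using that $R^q\pi_{*} = 0$ for $q \geq 2$ on a surface and that $R^1\pi_{*}$ of any coherent sheaf is supported on $\sing(X)$) yields
\[
\chi(\cO_X(D)) - \chi(\cO_X) = \bigl(\chi(\cO_Y(\lfloor \pi^{*} D\rfloor)) - \chi(\cO_Y)\bigr) + \delta_X(D),
\]
where $\delta_X(D) := \chi(R^1\pi_{*}\cO_Y(\lfloor \pi^{*} D\rfloor)) - \chi(R^1\pi_{*}\cO_Y)$ decomposes as a sum $\sum_P \delta_{X,P}(D)$ of contributions localized at the singular points.

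The remaining ingredient is a comparison of intersection numbers. Using $\pi^{*} D \cdot \pi^{*} D = D \cdot D$ and $\pi^{*} K_X \cdot \pi^{*} D = K_X \cdot D$ from Theorem~\ref{thm:intersection}, together with $\pi^{*} D \cdot E_\v = 0$ coming from~\eqref{eq:pullback}, one obtains the clean identity $\tfrac12 \pi^{*} D \cdot (\pi^{*} D - K_Y) = \tfrac12 D \cdot (D - K_X)$. Writing $\lfloor \pi^{*} D\rfloor = \pi^{*} D - \sum_\v \fpart{m_\v} E_\v$, where $m_\v$ is the coefficient of $E_\v$ in $\pi^{*} D$, the difference
\[
\eta_X(D) := \tfrac12 \lfloor \pi^{*} D\rfloor \cdot \bigl(\lfloor \pi^{*} D\rfloor - K_Y\bigr) - \tfrac12 \pi^{*} D \cdot (\pi^{*} D - K_Y)
\]
reduces to intersection numbers of exceptional divisors among themselves, and so again splits as $\sum_P \eta_{X,P}(D)$. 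Setting $R_{X,P}(D) := \eta_{X,P}(D) + \delta_{X,P}(D)$ produces the formula in the statement.

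The main obstacle is showing that $R_{X,P}(D)$ descends to a well-defined function on $\Weil(X,P)/\Cart(X,P)$ and is independent of the chosen resolution. If $D$ is Cartier at $P$, then $\pi^{*} D$ is already an integer divisor in a neighborhood of $\pi^{-1}(P)$, so $\lfloor \pi^{*} D\rfloor = \pi^{*} D$ there and $\eta_{X,P}(D) = 0$; moreover, locally near $P$ the sheaf $\cO_X(D)$ is trivial, which forces $R^1\pi_{*} \cO_Y(\pi^{*} D) \cong R^1\pi_{*} \cO_Y$ near $P$ and hence $\delta_{X,P}(D) = 0$. Thus $R_{X,P}$ vanishes on $\Cart(X,P)$ and descends to the quotient. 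Independence of the resolution is then obtained by dominating any two resolutions by a common refinement and reusing Theorem~\ref{thm:projection} together with the projection formula for intersection numbers, the whole game being a book-keeping of fractional parts along exceptional divisors.
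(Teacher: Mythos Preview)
The paper does not prove this theorem at all: it is quoted from Blache~\cite{Blache95} and used as a black box (the surrounding discussion in~\S\ref{sec:RR} only records consequences such as~\eqref{eq:A_X} and~\eqref{eq:delta-top}). So there is no ``paper's own proof'' to compare against; your outline is a reasonable reconstruction of the standard argument behind Blache's result, and the overall architecture (classical Riemann--Roch upstairs, Leray to descend $\chi$, and bookkeeping of fractional exceptional parts) is correct.

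One genuine logical slip: you argue that $R_{X,P}$ \emph{vanishes} on $\Cart(X,P)$ and then conclude it ``descends to the quotient''. That inference would be valid if $R_{X,P}$ were a group homomorphism, but it is not (the intersection term $\eta_{X,P}$ is quadratic in the fractional parts). What you actually need is translation-invariance: $R_{X,P}(D+D') = R_{X,P}(D)$ whenever $D'$ is Cartier at~$P$. For $\eta_{X,P}$ this is immediate once you note (as you implicitly do) that $\eta_{X,P}(D)$ depends only on the fractional parts $\{m_\v\}$, and these are unchanged by adding an integral pullback. For $\delta_{X,P}$ you only checked $\delta_{X,P}(D')=0$; the correct step is that near $\pi^{-1}(P)$ one has $\cO_Y(\lfloor\pi^*(D+D')\rfloor)\cong\cO_Y(\lfloor\pi^*D\rfloor)\otimes\pi^*\cO_X(D')$, so the projection formula gives $R^1\pi_*\cO_Y(\lfloor\pi^*(D+D')\rfloor)\cong R^1\pi_*\cO_Y(\lfloor\pi^*D\rfloor)\otimes\cO_X(D')$, and since $\cO_X(D')$ is trivial near $P$ the lengths agree. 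With this correction your sketch goes through.
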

A consequence of this result via Serre duality is that $R_{X,P}(D)=R_{X,P}(D-K_X)$.
One major breakthrough accomplished by Blache in~\cite[Thm.~2.1]{Blache95} is to provide an interpretation for
$R_{X,P}(D+K_{X})$ as follows. Consider $\sigma:(\tilde X,E)\to (X,P)$ a resolution of $X$ at $P$, where $E$ is the 
exceptional part of the resolution, $\sigma^* D=E_{D}+\hat D$, $\hat D$ is the strict transform of $D$, $E_{D}$ 
is its exceptional part, and $K_{\tilde X}$ (resp.~$K_X$) is the canonical divisor of $\tilde{X}$ (resp.~$X$). Then
\begin{equation}
\label{eq:A_X}
A_{X,P}(D):=-R_{X,P}(D+K_{X})=\frac{1}{2}E_{D}(\hat D+K_{\tilde X})-h^0(\sigma_* \cO_{\hat D}/{\cO_{D}}),
\end{equation}
where both summands depend on $\sigma$. Moreover, if $\sigma$ is a good resolution of $(X,D)$, then
\begin{equation}
\label{eq:delta-top}
\delta^{\ttop}(D):=\frac{1}{2}E_{D}(\hat D+K_{\tilde X})=-\frac{1}{2}E_{D}(E_D+Z_{\tilde X}),
\end{equation}
($Z_{\tilde X}$ is a numerical canonical cycle) and $\delta(D):=h^0(\sigma_* \cO_{\hat D}/{\cO_{D}})$, the classical
$\delta$-invariant, 
do not depend on~$\sigma$. Denote by $K_{\sigma} := K_{\tilde{X}} - \sigma^{*} K_X$ the relative canonical divisor,
then the canonical cycle $Z_{\tilde X}$ is numerically equivalent to $-K_{\sigma}$.

A second interpretation of this invariant is provided in~\cite{jiJM-correction}, where the second and third authors 
define a new invariant $\kappa_P(D)$, which in the cyclic $V$-manifold case has a geometric interpretation as $r_P(k)-1$,
where $r_P(k)$ is defined as the number of irreducible components of a generic Weil divisor on $(X,P)$ of class $k$, 
that is, a generic germ in~$\cO_{X,\zeta^k}$. In~\cite{jiJM-correction} it is shown that
$$
A_{X,P}(D)=\delta^{\ttop}(D)-\kappa_P(D).
$$
When applied this formula for generic germs one obtains a combinatorial way to calculate $A_{X,P}(D)$, that is,
$$
A_{X,P}(D)=\delta^{\ttop}(D)-r_P(D)+1.
$$

\subsection{Cyclic coverings of smooth algebraic surfaces d'apr\`es Esnault-Viehweg}
\mbox{}

Esnault-Viehweg's theory for cyclic covers in the smooth case can be 
presented as follows. Consider $X$ a projective smooth surface and 
let $D$ be a divisor which is linearly equivalent to $nH$ where
$H$ is another divisor. Then $\mathcal{O}_X(D)$ is isomorphic to 
$\mathcal{O}_X(H)^{\otimes n}$. Then, given a meromorphic section 
$t:X\dashrightarrow\mathcal{O}_X(D)$, such that $\ddiv (t)=D$,
we can consider
\[
\hat{X}:=\overline{\{(x,v)\in\mathcal{O}_X(H)\mid v^{\otimes n}=t(x)\}}
\]
and a suitable smooth model~$\pi:\tilde{X}\to \hat X$. Recall that the 
first Betti number $\dim_\CC H^1(\tilde{X},\CC)$ of $\tilde{X}$ equals 
$2\dim H^1(\tilde{X},\mathcal{O}_{\tilde{X}})$ 
and it is a birational invariant.

Let us assume now that $D$ is a simple normal crossing divisor. We can assume 
that $D$ is effective and $D=\sum_{j=1}^r n_i D_i$ is its decomposition
in irreducible components and $0\leq n_i<n$ (note that $n_i=0$ means that the 
covering is not ramified along $D_i$, but we allow this for technical reasons).
Using the eigen-decomposition of $\pi_*(\mathcal{O}_{\tilde{X}})$ induced
by that of $\mathcal{O}_{\tilde{X}}$ one can describe the irregularity of
$\tilde X$ in cohomological terms using line bundles on~$X$.

\begin{theorem}[\cite{Esnault-Viehweg82}]
Under the previous conditions, the irregularity of the covering $\tilde{X}$ equals 
$\sum_{k=0}^{n-1}\dim H^1(X,\mathcal{L}^{(k)})$, where
\begin{equation}
\label{eq:Lkliso} 
\mathcal{L}^{(k)}=
\mathcal{O}_X
\left(
-k H+
\sum_{i=0}^r
\left\lfloor\frac{k n_i}{n}\right\rfloor 
D_i
\right).
\end{equation}
\end{theorem}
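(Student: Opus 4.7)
The plan is to construct a smooth model of the cyclic cover, decompose the pushforward of its structure sheaf under the natural $\mu_n$-action, identify each eigensheaf with the claimed $\mathcal{L}^{(k)}$ through a local analysis at the ramification divisor, and extract the irregularity through a Leray argument. The singular total space $\hat X$ inside the total space of $\mathcal{O}_X(H)$ carries a tautological section $v$ with $v^n = t$ and a natural $\mu_n$-action $\zeta \cdot v = \zeta v$; passing to the normalization and then to an equivariant resolution yields a smooth $\mu_n$-equivariant model $\pi : \tilde X \to X$, and
\[
\pi_* \mathcal{O}_{\tilde X} \;=\; \bigoplus_{k=0}^{n-1} \mathcal{L}^{(k)}
\]
decomposes canonically into character eigensheaves. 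Each $\mathcal{L}^{(k)}$ is a rank-one locally free $\mathcal{O}_X$-module, since the complement of the ramification divisor is an étale $\mu_n$-cover and the summands of a reflexive sheaf on a smooth surface are reflexive.

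To identify each $\mathcal{L}^{(k)}$, note that away from $D$ the section $v^k$ trivializes $\mathcal{L}^{(k)}$, and its transition functions are those of $\mathcal{O}_X(-kH)$, because division by $v^k$ picks up the $k$-th power of the cocycle of $\mathcal{O}_X(H)$. At a smooth point of a component $D_i$ with local equation $\{x = 0\}$, the raw cover is $v^n = x^{n_i}$ up to units; writing $d = \gcd(n, n_i)$, $n = d n'$, $n_i = d n_i'$, each branch is normalized by the parametrization $(x, v) = (t^{n'}, t^{n_i'})$. A function $f \cdot v^k$ with $f$ meromorphic on $X$ then pulls back to $f(t^{n'}, y) \cdot t^{k n_i'}$, which extends holomorphically across $\{t = 0\}$ precisely when $\ord_{D_i}(f) \geq -\lfloor k n_i / n \rfloor$; equivalently $f$ may have a pole of order at most $\lfloor k n_i / n \rfloor$ along $D_i$, contributing the $+\lfloor k n_i / n\rfloor D_i$ correction. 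An analogous two-variable computation at a normal crossing of $D_i$ and $D_j$ supplies compatible contributions, and globalizing yields~\eqref{eq:Lkliso}.

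It remains to pass to cohomology. The fibers of $\pi$ are either zero-dimensional (off the ramification locus) or trees of rational curves resolving the cyclic quotient singularities produced by the normalization, so $R^q \pi_* \mathcal{O}_{\tilde X} = 0$ for $q > 0$. The Leray spectral sequence therefore collapses to
\[
H^1(\tilde X, \mathcal{O}_{\tilde X}) \;\cong\; H^1(X, \pi_* \mathcal{O}_{\tilde X}) \;=\; \bigoplus_{k=0}^{n-1} H^1(X, \mathcal{L}^{(k)}),
\]
and since the irregularity of $\tilde X$ equals $h^1(\tilde X, \mathcal{O}_{\tilde X})$ by definition, the stated formula follows.

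The hard step will be the local identification of the eigensheaves: one must follow sections through both the normalization and the resolution, verify that the floor correction $\lfloor k n_i / n \rfloor$ emerges with the correct sign, and check that the local pieces glue into a genuine line bundle on $X$. The simple normal crossing assumption on $D$ is essential here, as it reduces the local analysis to one or two explicit monomial equations at a time, making the cyclic quotient structure entirely transparent.
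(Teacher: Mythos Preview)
The paper does not supply its own proof of this statement: it is quoted directly from \cite{Esnault-Viehweg82} and used as the base case for the paper's generalization to $V$-surfaces (Theorem~\ref{thm:Esnault}). Your sketch is essentially the classical Esnault--Viehweg argument---decompose the pushforward of $\mathcal{O}_{\tilde X}$ into $\mu_n$-eigensheaves, identify each summand by a local computation at the ramification divisor, and collapse the Leray spectral sequence using rationality of the singularities of the normalization---and it is correct in outline.

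For comparison, the paper's proof of the \emph{generalized} statement (Theorem~\ref{thm:Esnault}) does not proceed this way. Rather than repeating the eigensheaf analysis in the singular setting, the paper takes the smooth case as a black box and shows that the cohomology groups $H^q(X,\mathcal{O}_X(L^{(k)}))$ are invariant under each weighted blow-up in a Hirzebruch--Jung resolution of the quotient singularities, via Lemma~\ref{lemma:acyclic} and Proposition~\ref{prop:comparison_cohomology}. So your direct approach and the paper's inductive comparison argument are genuinely different strategies: yours gives a self-contained derivation of the smooth case, while the paper's method is designed to bootstrap the smooth result to singular ambient surfaces without reopening the local eigensheaf computation.
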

One of the goals of this paper is to generalize this result to normal surfaces, see Theorem~\ref{thm:Esnault}.

\subsection{Cyclic coverings of \texorpdfstring{$\PP^2$}{P2} branched along a curve}
\mbox{}

In the particular case when $X=\PP^2$ the complex projective plane and $\mathcal{C}$ is a reduced projective plane 
curve of degree~$n$, the description of the $n$-th cyclic cover of $\PP^2$ ramified 
along~$\mathcal{C}$ can be given in very specific terms based on the dimension of the space of curves of 
a certain degree with a prescribed behavior at the singular points of~$\cC$.
Such a relation appears in several works and has its origin in Zariski 
(cf.~\cite{Zariski-irregularity,Libgober-alexander,es:82,Loeser-Vaquie-Alexander,Sabbah-Alexander,Artal94,Libgober-characteristic}). 

Before we can give a more precise
statement we need to introduce some notation. 
Let $\sigma:Y\to\PP^2$ be an embedded resolution of $\cC$
(in fact, we can take the minimal one and we may resolve only the 
set $\sing^*(\mathcal{C})$ of singular points of $\mathcal{C}$ other than ordinary double points 
whose branches belong to different irreducible components of $\mathcal{C}$).
As a consequence $\sigma^*(\mathcal{C})$ 
is a \emph{simple normal crossing divisor}. Let $\mathcal{C}_1,\dots,\mathcal{C}_r$ be the irreducible
components of $\mathcal{C}$. Then, one can write
\begin{equation}
\label{eq:ttC}
\sigma^*(\mathcal{C})=
\sum_{i=1}^r \hat{\mathcal{C}}_i+
\sum_{P\in\sing(\mathcal{C})}
\sum_{i=1}^{r_P} n_{P,i} E_{P,i}
\end{equation}
where $\hat{\mathcal{C}}_i$ is the strict transform of $\mathcal{C}_i$ under~$\sigma$ and $E_{P,i}$, $i=1,...,r_P$ 
accounts for the exceptional components in $\sigma^{-1}(P)$. 
Given $h\in \cO_{X,P}$ the germ of a holomorphic function at $P$, we use the following standard notation 
$\mult_{E_{P,i}} \sigma^*h$ to describe the order of vanishing of the total transform of $h$ along $E_{P,i}$. 
Analogously, given $\omega_P$ the germ of a holomorphic $2$-form not vanishing at~$P$, we use the 
notation $\nu_{P,i}:=1+\mult_{E_{P,i}} \sigma^*\omega_P$.

An $n$-th cyclic covering ramified along $\cC$ induces an $n$-th cyclic cover ramified along the divisor 
$\sigma^*\cC$ in $Y$~\eqref{eq:ttC}. One can define the line bundles $\mathcal{L}^{(k)}=\cO_Y(L^{(k)})$ 
as in~\eqref{eq:Lkliso}, where $H$ is the pull-back in $Y$ of a projective line.
The announced description of $H^1(Y,\mathcal{L}^{(k)})$ admits the following form.

\begin{theorem}[\cite{Artal94}]
\label{thm:conucleo_liso}
For $0\leq k<n$, let
\begin{equation}
\label{eq:defsigmak}
\sigma_k:
H^0(\PP^2,\mathcal{O}_{\PP^2}(k-3))\longrightarrow
\bigoplus_{P\in\sing^*(\mathcal{C})}
\frac{\mathcal{O}_{\PP^2,P}}{\mathcal{J}_{\mathcal{C},P,\frac{k}{n}}}
\end{equation}
be the natural map where 
\[
\array{rcl}
\mathcal{J}_{\mathcal{C},P,\lambda}&:=&
\{h\in\mathcal{O}_{\PP^2,P}\mid\mult_{E_{P,i}}\sigma^*(h)\geq
\left\lfloor \lambda n_{P,i}\right\rfloor
-\nu_{P,i}+1\}\\
&=&
\{h\in\mathcal{O}_{\PP^2,P}\mid\mult_{E_{P,i}}\sigma^*(h)>
\lambda n_{P,i}-\nu_{P,i}\}.
\endarray
\]
\[
\dim H^1(Y,\mathcal{L}^{(k)})=\dim\coker\sigma_k.
\]
\end{theorem}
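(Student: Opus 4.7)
The strategy is to transport the computation of $H^1(Y,\mathcal{L}^{(k)})$ from $Y$ down to $\PP^2$ via Serre duality followed by the Leray spectral sequence. First, Serre duality on the smooth projective surface $Y$ gives
\[
\dim H^1(Y,\mathcal{L}^{(k)})=\dim H^1\bigl(Y,\omega_Y\otimes (\mathcal{L}^{(k)})^{-1}\bigr).
\]
Expanding $K_Y=\sigma^*K_{\PP^2}+\sum_{P,i}(\nu_{P,i}-1)E_{P,i}$ and $K_{\PP^2}\sim -3H$, and using the assumption $0\leq k<n$ to kill contributions from the strict transforms (for which $\lfloor k/n\rfloor=0$), the line bundle on the right becomes $\mathcal{O}_Y(M_k)$ with
\[
M_k:=(k-3)H+\sum_{P,i}\bigl(\nu_{P,i}-1-\lfloor kn_{P,i}/n\rfloor\bigr)E_{P,i}.
\]

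Next I would push $\mathcal{O}_Y(M_k)$ forward by $\sigma$. By the projection formula and Proposition~\ref{prop:H0D}, the stalk of $\sigma_*\mathcal{O}_Y(M_k)$ at $P\in\sing^*(\mathcal{C})$ is $\mathcal{O}_{\PP^2,P}(k-3)\otimes \mathcal{J}_{\mathcal{C},P,k/n}$, since the exceptional coefficients translate into exactly the defining inequality $\mult_{E_{P,i}}\sigma^*h\geq \lfloor kn_{P,i}/n\rfloor-\nu_{P,i}+1$ for germs $h\in\mathcal{O}_{\PP^2,P}$ (a normality/Hartogs argument shows meromorphic sections with prescribed ``pole'' along an exceptional fiber above a point must be holomorphic). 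At points outside $\sing^*(\mathcal{C})$ the map $\sigma$ is an isomorphism, so the stalk is just $\mathcal{O}_{\PP^2,P}(k-3)$. In total, $\sigma_*\mathcal{O}_Y(M_k)\cong \mathcal{O}_{\PP^2}(k-3)\otimes\mathcal{J}_k$ for the ideal sheaf $\mathcal{J}_k\subset\mathcal{O}_{\PP^2}$ with the prescribed stalks. The core technical step is then to establish the vanishing $R^1\sigma_*\mathcal{O}_Y(M_k)=0$: writing $M_k=K_Y+\Lambda$ with $\Lambda:=kH-\sum_{P,i}\lfloor kn_{P,i}/n\rfloor E_{P,i}$, the identity $(k/n)\sigma^*\mathcal{C}\sim_{\QQ}kH$ yields $\Lambda\sim_{\QQ}\{(k/n)\sigma^*\mathcal{C}\}$, the fractional part of a $\QQ$-divisor supported on the SNC divisor $\sigma^*\mathcal{C}$ with all coefficients in $[0,1)$; since $(k/n)\sigma^*\mathcal{C}$ is a pullback (hence $\sigma$-numerically trivial), this puts us in the classical fractional-coefficient setup of a relative Grauert--Riemenschneider / Kawamata--Viehweg vanishing, delivering the desired $R^1\sigma_*=0$.

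With the pushforward identified and its $R^1$ vanishing, Leray gives $H^1(Y,\mathcal{O}_Y(M_k))\cong H^1(\PP^2,\mathcal{O}_{\PP^2}(k-3)\otimes\mathcal{J}_k)$. The standard short exact sequence
\[
0\to \mathcal{O}_{\PP^2}(k-3)\otimes\mathcal{J}_k\to \mathcal{O}_{\PP^2}(k-3)\to \bigoplus_{P\in\sing^*(\mathcal{C})}\mathcal{O}_{\PP^2,P}\big/\mathcal{J}_{\mathcal{C},P,k/n}\to 0
\]
together with the vanishing $H^1(\PP^2,\mathcal{O}_{\PP^2}(k-3))=0$ (valid for every line bundle on $\PP^2$) then identifies $H^1(\PP^2,\mathcal{O}_{\PP^2}(k-3)\otimes\mathcal{J}_k)$ with $\coker\sigma_k$, completing the proof after taking dimensions. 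The main obstacle is the relative vanishing $R^1\sigma_*=0$, as this is the only step requiring non-formal positivity input; the rest is a standard combination of projection formula, Proposition~\ref{prop:H0D}, and long-exact-sequence bookkeeping.
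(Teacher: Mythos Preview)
Your argument is correct and is essentially the standard ``multiplier ideal'' proof: the sheaf $\sigma_*\mathcal{O}_Y(M_k)$ is precisely $\mathcal{O}_{\PP^2}(k-3)$ tensored with the multiplier ideal $\mathcal{J}\bigl((k/n)\cdot\mathcal{C}\bigr)$, the local vanishing theorem for multiplier ideals gives $R^1\sigma_*=0$, and the rest is the usual cokernel identification via the short exact sequence of the ideal sheaf. Everything you wrote checks out; in particular your identity $\Lambda\sim_{\QQ}\{(k/n)\sigma^*\mathcal{C}\}$ is exactly what is needed to invoke the relative Kawamata--Viehweg / local vanishing theorem with the trivial $\sigma$-nef part.

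The paper, however, takes a deliberately different route. Instead of proving $R^1\sigma_*=0$, it computes $\chi(Y,\mathcal{L}^{(k)})$ by Riemann--Roch, identifies $H^0=0$ and $H^2\cong(\ker\sigma_k)^*$ via Serre duality, and is left with an equation
\[
\dim H^1(Y,\mathcal{L}^{(k)})=\dim\coker\sigma_k-\sum_{P}\beta_{\mathcal{C},P}^{(k)},
\]
where each $\beta_{\mathcal{C},P}^{(k)}$ is a purely local quantity depending only on the topological type of $(\mathcal{C},P)$ and on $k/n$. The vanishing $\beta_{\mathcal{C},P}^{(k)}=0$ is then obtained by a \emph{global-to-local} trick: one realizes the same local singularity on a curve $\mathcal{C}_N$ of very large degree~$N$, applies Nori's theorem to conclude that the covering has abelian fundamental group and hence $H^1=0$, and observes that for large~$N$ the map $\sigma_{kM}$ is surjective, forcing $\beta=0$.

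Your approach is shorter and more conceptual in the smooth case and avoids the appeal to Nori's theorem entirely. The paper's approach is chosen because it is the one that generalizes to the main result (Theorem~\ref{thm:conucleo_singular}) on surfaces with quotient singularities and non-reduced ramification: there the relevant $\mathcal{M}_{\mathcal{C},P}^{(k)}$ are modules rather than ideals, a direct local vanishing statement is less readily available, and the Riemann--Roch-with-correction-terms plus Nori argument goes through with only notational changes.
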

The second goal of this work is to generalize this theorem to the weighted projective plane.
In order to do so we present a different proof of Theorem~\ref{thm:conucleo_liso} that allows 
for an extension to the singular case. This new proof is somewhat more conceptual than the original 
one from~\cite{Artal94} and will be outlined here for exposition purposes and to help the reader 
have a basic idea of the strategy that will be used for the proof of the main 
Theorem~\ref{thm:conucleo_singular}. 

\begin{proof}[Proof of Theorem{\rm~\ref{thm:conucleo_liso}}]
The proof starts using the Riemann-Roch Theorem on the line bundles $\mathcal{L}^{(k)}=\cO_Y(L^{(k)})$:
\begin{equation} 
\label{eq:RRliso}
\array{c}
\chi(Y,\mathcal{L}^{(k)})=
\dim H^0(Y,\mathcal{L}^{(k)})
-\dim H^1(Y,\mathcal{L}^{(k)})+
\dim H^2(Y,\mathcal{L}^{(k)})\\
=\chi(Y,\mathcal{O}_Y)+
\dfrac{{L}^{(k)}\cdot({L}^{(k)}-K_Y)}{2}.
\endarray
\end{equation}

Note that $\chi(Y,\mathcal{O}_Y)=1$ since $Y$ is a rational surface.
It is not hard to check that $H^0(Y,\mathcal{L}^{(k)})$
is a subspace of $H^0(\PP^2,\mathcal{O}_{\PP^2}(-k))$ and hence 
$\dim H^0(Y,\mathcal{L}^{(k)})=0$.

Using Serre duality Theorem, we obtain that
\[
\dim H^2(Y,\mathcal{L}^{(k)})=
\dim H^0\left(Y,\omega_Y\otimes{\mathcal{L}^{(k)}}^{-1}\right).
\]
The inclusion of $H^0\left(Y,\omega_Y\otimes{\mathcal{L}^{(k)}}^{-1}\right)$
in $H^0(\PP^2,\mathcal{O}_{\PP^2}(k-3))$ can be understood as the kernel
of the map $\sigma_k$ defined in~\eqref{eq:defsigmak}.
Consider the right-hand term in~\eqref{eq:RRliso}:
\begin{gather*}
1+\frac{L^{(k)}\cdot(L^{(k)}-K_Y)}{2}=
1+\frac{k(k-3)}{2}+\\
\frac{1}{2}\sum_{P\in\sing^*(\mathcal{C})}
\left(\sum_{i=1}^{r_P}
\left\lfloor\frac{k n_{P,i}}{n}\right\rfloor
E_{P,i}
\right)
\cdot
\left(\sum_{i=1}^{r_P}
\left(\left\lfloor\frac{k n_{P,i}}{n}\right\rfloor-\nu_{P,i}+1\right)
E_{P,i}
\right)=\\
\dim H_0(\PP^2,\mathcal{O}_{\PP^2}(k-3))+
\sum_{P\in\sing^*(\mathcal{C})} \alpha_{\mathcal{C},P}^{(k)}.
\end{gather*}
Putting everything together,
\begin{equation}\label{eq:conucleo1}
\dim H^1(Y,\mathcal{L}^{(k)})=
\dim\coker\sigma_k-
\sum_{P\in\sing^*(\mathcal{C})} 
\left(\alpha_{\mathcal{C},P}^{(k)}
+\dim\frac{\mathcal{O}_{\PP^2,P}}{\mathcal{J}_{\mathcal{C},P,\frac{k}{n}}}
\right).
\end{equation}
Note that the term
\[
\beta_{\mathcal{C},P}^{(k)}:=\alpha_{\mathcal{C},P}^{(k)}
+\dim\frac{\mathcal{O}_{\PP^2,P}}{\mathcal{J}_{\mathcal{C},P,\frac{k}{n}}}
\]
only depends on the topological type of $(\mathcal{C},P)$ and the number $\frac{k}{n}$,
for $n=\deg \cC$. 
For $N$ big enough, we can find a projective plane curve $\mathcal{C}_N$ of degree~$N$
such that $\sing(\mathcal{C}_N)=\{P_N\}$ and whose local topological type (even the analytical one)
at $(\mathcal{C}_N,P_N)$ coincides with $(\mathcal{C},P)$.
A classical theorem by Nori extending Zariski's conjecture (see~\cite[Theorem II, p.~306]{Nori-zariski}) 
guarantees that, for $N$ big enough, the fundamental group of $\PP^2\setminus\mathcal{C}_N$ is abelian.
One can assume the extra condition that $N=Mn$ for some $M\gg 0$, hence~\eqref{eq:conucleo1} 
for $\mathcal{C}_N$ and $kM$ implies that
\[
\dim\coker\sigma_{kM}=\beta_{\mathcal{C}_N,P_{N}}^{(kM)}
=\beta_{\mathcal{C},P}^{(k)}.
\]
Again, for $M$ big enough the map $\sigma_{kM}$ is surjective and thus $\beta_{\mathcal{C},P}^{(k)}=0$.
\end{proof}

\section{Esnault--Viehweg's theorem revisited}\label{sc:Esnault}

Here we present a generalization of Esnault--Viehweg's results~\cite{Esnault-Viehweg82} about cyclic
branched covering assuming the ambient surfaces involved have abelian quotient singularities and
the divisors have simple $\Q$-normal crossings --\,this problem was also considered by 
Steenbrink~\cite[Lemma~3.14]{Steenbrink77} in the context of the semistable reduction associated with
an embedded resolution. Let us start with a couple of preliminary results.

\begin{lemma}\label{lemma:acyclic}
Let $\pi: Y \to X$ be a weighted blow-up at a cyclic quotient singular point
and let $D$ be a Weil divisor on $X$. Then $R^i \pi_{*} \cO_Y( \floor{\pi^{*}D} ) = 0$ for $i>0$.
\end{lemma}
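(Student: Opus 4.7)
The plan is to work locally on $X$ and reduce the statement to a cohomology vanishing on the exceptional divisor. Since the claim is local on the base, we may restrict to an analytic neighborhood of the cyclic quotient singular point $P$, so that $D=\ddiv(f)$ for a quasi-invariant germ $f$. The map $\pi$ is an isomorphism over $X\setminus\{P\}$, hence $R^i\pi_*(\cdot)$ is supported at $P$ for $i>0$ and vanishes for $i\geq 2$ by dimensional reasons; only the case $i=1$ needs attention.

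By the Theorem on Formal Functions,
\[
\bigl(R^1\pi_*\cO_Y(\floor{\pi^*D})\bigr)_P^{\wedge} \cong \varprojlim_n H^1\bigl(E_n,\cO_Y(\floor{\pi^*D})|_{E_n}\bigr),
\]
where $E_n$ denotes the $n$-th infinitesimal neighborhood of the exceptional divisor $E$ in $Y$. Setting $L:=\floor{\pi^*D}$ and using the standard short exact sequences
\[
0\longrightarrow \cO_Y(L-nE)|_E \longrightarrow \cO_Y(L)|_{E_{n+1}} \longrightarrow \cO_Y(L)|_{E_n} \longrightarrow 0,
\]
induction on $n$ reduces the task to verifying $H^1(E,\cO_Y(L-nE)|_E)=0$ for every $n\geq 0$.

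For this last step, Proposition~\ref{formula_self-intersection} gives $\pi^*D=\hat D+\frac{\nu_{D,P}}{e}E$, so $L=\hat D+\floor{r}E$ with $r:=\nu_{D,P}/e$, and a direct calculation yields
\[
(L-nE)\cdot E = \frac{e\,\nu_{D,P}}{abd} - (n-\floor{r})\frac{e^2}{abd} = \frac{e^2\bigl(\fpart{r}+n\bigr)}{abd}\geq 0
\]
for every $n\geq 0$. Since $E\cong \PP^1_{(a,b)}$ is a rational $V$-curve (isomorphic to $\PP^1$ by Proposition~\ref{propPw}), a $V$-line bundle of non-negative degree on $E$ has trivial first cohomology, delivering the desired vanishing.

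I expect the main technical hurdle to be the careful interpretation of $\cO_Y(L-nE)|_E$ as a $V$-line bundle on $E$: since $Y$ may carry quotient singularities along $E$ (see \eqref{eq:charts}) and the strict transform $\hat D$ may meet $E$ at those singular points, the restriction to $E$ must be handled in the $V$-manifold setting, and the short exact sequences above must be justified when $E$ is only $\QQ$-Cartier. Once this identification is made, the non-negativity of the intersection degree together with $H^1$-vanishing on rational $V$-curves closes the argument.
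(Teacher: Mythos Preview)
Your approach is sound in outline but differs substantially from the paper's. The paper takes a resolution $\rho: Z \to Y$ so that $\sigma := \pi \circ \rho: Z \to X$ is a resolution of $X$; Blache's result \cite[\S1.1]{Blache95} then gives $R^1\sigma_*\cO_Z(\floor{\sigma^*D}) = 0$ directly. Grothendieck's spectral sequence $E_2^{p,q} = R^p\pi_* R^q\rho_* \cF \Rightarrow R^{p+q}\sigma_*\cF$ (with $\cF = \cO_Z(\floor{\sigma^*D})$) forces $R^1\pi_*(\rho_*\cF) = 0$, and the projection formula (Theorem~\ref{thm:projection}) identifies $\rho_*\cF$ with $\cO_Y(\floor{\pi^*D})$. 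This bypasses entirely the $V$-manifold subtleties you flag: no restriction to $E$, no short exact sequences that require $E$ to be Cartier, no identification of reflexive sheaves on $Y$ with $V$-line bundles on the exceptional curve.

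Your route via formal functions and the degree on $E$ is more hands-on and essentially self-contained; the non-negativity $\frac{e^2(\{r\}+n)}{abd}\geq 0$ is the heart of it. (There is a small sign slip in your intermediate display: since $E^2=-\frac{e^2}{abd}$ one has $(\floor{r}-n)E^2 = +(n-\floor{r})\frac{e^2}{abd}$, which then yields your correct final expression.) The cost is precisely the hurdle you already identify: when $Y$ has quotient singularities on $E$ and $L$ is not Cartier there, the sheaves $\cO_Y(L-nE)$ are only reflexive, so the exact sequences on the thickenings $E_n$ and the interpretation of the restriction as a line bundle of the given degree on $E\cong\PP^1$ need to be set up carefully in the $V$-manifold framework. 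This can be done, but the paper's spectral-sequence argument is cleaner exactly because it outsources the vanishing to the smooth resolution and avoids these issues altogether.
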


\begin{proof}
For dimension reasons it is enough to prove the result for $i=1$.
Consider $\rho: Z \to Y$ a resolution of singularities of $Y$ and denote by $\sigma$ the composition
$\pi \circ \rho$ which is in turn a resolution of $X$. Let us denote $\cF := \cO_Z(\floor{\sigma^{*}D})$.
By~\cite[\S1.1]{Blache95}, one has $R^1 \sigma_{*} \cF = 0$.

On the other hand, Grothendieck's spectral sequence $E_2^{p,q} = (R^p \pi_* \circ R^q \rho_*) \cF
\Longrightarrow R^{p+q} \sigma_* \cF$ gives rise to the following
$$
R^1 \sigma_* \cF \simeq (R^0 \pi_* \circ R^1 \rho_*) \cF \oplus (R^1 \pi_* \circ R^0 \rho_*) \cF
$$
and, in particular, $(R^1 \pi_* \circ R^0 \rho_*) \cF = 0$.

Finally,
$$
R^0 \rho_* \cF = \rho_* \cF = \rho_* \cO_Z(\floor{\sigma^*D})
= \rho_* \cO_Z ( \floor{\rho^* \pi^* D} ) = \cO_Y ( \floor{\pi^*D} ),
$$
where the latest equality holds by the projection formula in Theorem~\ref{thm:projection}.
\end{proof}

\begin{prop}\label{prop:comparison_cohomology}
Under the assumptions of Lemma{\rm~\ref{lemma:acyclic}}, one has the isomorphisms
\[
H^p (X, \cO_X(D)) \simeq H^p (Y, \cO_Y( \floor{\pi^{*} D} ))
\text{ and }H^p (X, \cO_X(D)) \simeq H^p (Y, \cO_Y( \ceil{\pi^{*} D + K_\pi} )).
\]
\end{prop}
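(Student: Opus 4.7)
The first isomorphism is a direct consequence of the Leray spectral sequence for $\pi$ applied to $\cF := \cO_Y(\floor{\pi^* D})$. By Lemma~\ref{lemma:acyclic}, $R^q \pi_* \cF = 0$ for all $q > 0$, so the spectral sequence $E_2^{p,q} = H^p(X, R^q \pi_* \cF) \Rightarrow H^{p+q}(Y, \cF)$ degenerates and yields $H^p(Y, \cF) \cong H^p(X, \pi_* \cF)$. The projection formula in Theorem~\ref{thm:projection}, together with the convention $\cO_Y(\pi^* D) = \cO_Y(\floor{\pi^* D})$ from~\eqref{eq:OD}, identifies $\pi_* \cF = \cO_X(D)$, completing this part.

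For the second isomorphism, my plan is to reduce to the first one via a Serre duality argument. Applying the first isomorphism, now already proven, to the Weil divisor $K_X - D$ in place of $D$ gives
\[
H^p(X, \cO_X(K_X - D)) \cong H^p(Y, \cO_Y(\floor{\pi^*(K_X - D)})).
\]
Both $X$ and $Y$ are projective $V$-surfaces with cyclic quotient singularities, hence Cohen--Macaulay of dimension $2$ whose dualizing sheaves are $\cO_X(K_X)$ and $\cO_Y(K_Y)$ respectively, so Serre duality applies on each side. Dualizing the displayed isomorphism and reindexing $p \mapsto 2 - p$ reduces the problem to the divisor identity
\[
K_Y - \floor{\pi^*(K_X - D)} = \ceil{\pi^* D + K_\pi}.
\]
This follows from $K_\pi = K_Y - \pi^* K_X$ and the rule $-\floor{A} = \ceil{-A}$ for a $\Q$-divisor $A$: one computes $K_Y - \floor{\pi^*(K_X - D)} = K_Y + \ceil{\pi^*(D - K_X)} = \ceil{K_Y + \pi^* D - \pi^* K_X} = \ceil{\pi^* D + K_\pi}$, where the middle step uses that $K_Y$ is integer-valued and so commutes with $\ceil{\cdot}$.

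The main obstacle I anticipate is the careful invocation of Serre duality in the singular setting. While cyclic quotient surface singularities are rational and Cohen--Macaulay, and the dualizing sheaf identifies with $\cO(K)$ for the canonical Weil divisor, this identification should be justified using reflexivity of $\cO_X(D)$ and the vanishing of $\mathcal{E}xt^j(\cO_X(D), \omega_X)$ for $j > 0$ on such singularities. One could alternatively replace Serre duality with Grothendieck duality for $\pi$, which would bypass any implicit projectivity assumption on $X$. A secondary technical point is the componentwise verification of the floor/ceiling identity along exceptional divisors, where both $\pi^* K_X$ and $\pi^* D$ may carry nontrivial rational parts determined by the weighted blow-up data of~\eqref{eq:charts} and Proposition~\ref{formula_self-intersection}.
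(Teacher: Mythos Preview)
Your proof is correct and follows essentially the same route as the paper: Leray plus the projection formula for the first isomorphism, and Serre duality on both $X$ and $Y$ combined with the floor/ceiling identity for the second. The paper addresses your stated concern about Serre duality on these singular surfaces simply by citing \cite[\S3.1]{Blache95}, and otherwise carries out the same divisor computation you do.
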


\begin{proof}
Since $\cO_Y(\floor{\pi^{*}D})$ is acyclic for the functor $\pi_{*}$ by Lemma~\ref{lemma:acyclic},
Leray's spectral sequence $E_2^{p,q} = H^p(X,R^q \pi_{*} \cO_Y(\floor{ \pi^{*} D }) ) \Longrightarrow
H^{p+q}(Y,\cO_Y(\floor{\pi^*D}))$ provides the isomorphism $H^p (X, R^0 \pi_{*} \cO_Y(\floor{\pi^{*}D})) \simeq
H^p(Y,\cO_Y(\floor{\pi^{*}D}))$. By the projection formula on normal surfaces~Theorem~\ref{thm:projection},
one obtains $\cO_X(D) = \pi_{*} \cO_Y(\floor{\pi^{*} D})$ and hence the first isomorphism holds.

The second isomorphism is a consequence of combining the first one with a generalization of Serre's duality
in this context~\cite[\S3.1]{Blache95}, as follows
\begin{align*}
& H^p(X,\cO_X(D)) = H^{2-p}(X,\cO_X(-D+K_X)) = H^{2-p}(Y,\cO_Y(\floor{\pi^{*}(-D+K_X)})) \\
&= H^p (Y, \cO_Y( - \floor{\pi^{*}(-D+K_X)} + K_Y)) = H^p (Y, \cO_Y( - \floor{ -\pi^{*} D + \pi^{*} K_X - K_Y } )) \\
&= H^p (Y, \cO_Y( - \floor{-\pi^*D - K_\pi} )) = H^p (Y, \cO_Y( \ceil{\pi^*D+K_\pi} )).
\end{align*}
Recall that $K_Y = \pi^{*} K_X + K_\pi$ and both canonical divisors are Weil divisors with integral coefficients.
\end{proof}

Let $\rho: \tilde{X} \to X$ be a cyclic branched covering of $n$ sheets between two projective surfaces having at most
abelian quotient singularities. In particular, $H^1(\tilde{X},\CC)$ is endowed with a pure Hodge structure compatible
with the monodromy of the covering. Due to the construction of this Hodge structure $H^1(\tilde{X},\CC)$ is
naturally isomorphic to $H^1(\tilde{X},\cO_{\tilde{X}}) \oplus H^0(\tilde{X},\Omega_{\tilde{X}}^1)$ where
$H^0(\tilde{X},\Omega_{\tilde{X}}^1)$ is the complex conjugation of $H^1(\tilde{X},\cO_{\tilde{X}})$.
The following result computes the Hodge structure of $H^q(\tilde{X},\cO_{\tilde{X}})$ providing
the decomposition in terms of its invariant subspaces.

\begin{theorem}\label{thm:Esnault}
Under the previous notation, assume the ramification set is given by a simple $\Q$-normal crossing integral
Weil divisor $D = \sum_{i=1}^r n_i D_i$ which is linearly equivalent to $nH$ for some integral
Weil divisor $H$. Then
$$
  H^q(\tilde{X},\cO_{\tilde{X}}) = \bigoplus_{k=0}^{n-1} H^q (X, \cO_X(L^{(k)})), \quad
  L^{(k)} = -k H + \sum_{i=1}^r \floor{\frac{kn_i}{n}} D_i,
$$
where the monodromy of the cyclic covering acts on $H^q(X,\cO_X(L^{(k)}))$ by multiplication by~$e^{\frac{2\pi ik}{n}}$.
\end{theorem}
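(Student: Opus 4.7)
The plan is to reduce the theorem to a local computation of the pushforward sheaf $\rho_*\cO_{\tilde X}$. Since $\rho$ is finite and hence affine, $R^q\rho_*\cO_{\tilde X}=0$ for $q>0$ and $H^q(\tilde X,\cO_{\tilde X})=H^q(X,\rho_*\cO_{\tilde X})$. The deck group $\mu_n$ acts on $\rho_*\cO_{\tilde X}$, and since $n$ is invertible the sheaf splits into character eigensheaves $\rho_*\cO_{\tilde X}=\bigoplus_{k=0}^{n-1}\cF^{(k)}$, where $\cF^{(k)}$ is the $\zeta\mapsto\zeta^k$ isotypic piece. Since this splitting is respected by cohomology, the whole statement reduces to proving the identification $\cF^{(k)}\cong\cO_X(L^{(k)})$; the monodromy assertion then follows tautologically from the definition of $\cF^{(k)}$.

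The identification is local on $X$. At a smooth point, $\tilde X$ is realized as the normalization of the variety $\{v^n=t\}$ inside the total space of $\cO_X(H)$, where $t\in H^0(X,\cO_X(D))$ is the distinguished section with $\ddiv(t)=D$. Writing $t=\prod_i f_i^{n_i}$ locally with $f_i$ a local equation for $D_i$, the element $v^k\prod_i f_i^{-\lfloor kn_i/n\rfloor}$ is integral over $\cO_X$ (its $n$-th power equals $\prod_i f_i^{kn_i-n\lfloor kn_i/n\rfloor}\in\cO_X$) and is the local generator of the $k$-th eigensheaf after normalization. As a subsheaf of the constant sheaf of rational sections of $\cO_X(-kH)$ it is exactly $\cO_X(L^{(k)})$; this is the classical Esnault--Viehweg computation. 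At a quotient singularity $P$ with analytic model $\frac{1}{d}(p,q)$, I would pull back through the smooth uniformizer $\phi\colon(\CC^2,0)\to(X,P)$: since $D$ is simple $\Q$-normal crossing, $\phi^*D$ is a simple normal crossing divisor on $\CC^2$ with integer coefficients, and the normalization of the base change $\tilde X\times_X\CC^2$ is a smooth-case cover of $\CC^2$ branched along $\phi^*D$, to which the previous sentence applies. Taking $\mu_d$-invariants and using the identity $\cO_X(E)=(\phi_*\cO_{\CC^2}(\phi^*E))^{\mu_d}$ for an integer Weil divisor $E$ then descends the identification back to $X$.

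The main obstacle is this final descent. Two points need care: first, that the base change $\tilde X\times_X\CC^2$, followed by normalization, coincides with the smooth cyclic cover of $\CC^2$ branched along $\phi^*D$, which requires $\phi$ to be étale in codimension one and the isolated nature of the quotient singularity; and second, that the $\mu_n$- and $\mu_d$-actions commute on this cover, so that one can first decompose by $\mu_n$-character and then take $\mu_d$-invariants to obtain $\cF^{(k)}$ on $X$. Once these are in place, the integrality of the coefficients of $\phi^*D_i$ ensures that the floor expression $\sum_i\lfloor kn_i/n\rfloor D_i$ pulls back without correction, and Galois descent delivers $\cF^{(k)}\cong\cO_X(L^{(k)})$, completing the proof of both isomorphisms and the compatibility with the monodromy.
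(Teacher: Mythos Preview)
Your approach is correct and takes a genuinely different route from the paper's. The paper does not compute $\rho_*\cO_{\tilde X}$ directly; instead it argues by induction along the Hirzebruch--Jung resolution. Starting from the known smooth case, it shows that for a single weighted $(1,p)$-blow-up $\pi:Y\to X$ at a cyclic point of type $\frac{1}{d}(1,p)$, the divisor $L'^{(k)}$ on $Y$ attached to the pulled-back cover satisfies either $L'^{(k)}=\floor{\pi^*L^{(k)}}$ or $L'^{(k)}=\ceil{\pi^*L^{(k)}+K_\pi}$ (according to the sign of an explicitly bounded coefficient~$\gamma$), and then invokes Proposition~\ref{prop:comparison_cohomology} to conclude $H^q(Y,\cO_Y(L'^{(k)}))\cong H^q(X,\cO_X(L^{(k)}))$. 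Iterating reduces to a smooth base. Your route is more direct and avoids this blow-up bookkeeping; moreover the descent step you flag as the main obstacle can be bypassed entirely. Once you know $\cF^{(k)}|_{\reg(X)}\cong\cO_{\reg(X)}(L^{(k)})$ from the classical computation, reflexivity finishes the job: $\cO_X(L^{(k)})$ is reflexive by construction, and $\cF^{(k)}$ is reflexive because it is a direct summand of $\rho_*\cO_{\tilde X}$, which is $S_2$ since $\tilde X$ is normal and $\rho$ is finite; two reflexive rank-one sheaves on a normal surface that agree outside a finite set agree everywhere. What the paper's approach buys, on the other hand, is the pair Lemma~\ref{lemma:acyclic} and Proposition~\ref{prop:comparison_cohomology}, which encode precisely how the sheaves $\cO_X(L^{(k)})$ behave under weighted blow-ups --- a comparison that is natural in a paper whose central theme is computing with $\QQ$-resolutions rather than full resolutions.
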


\begin{proof}
Since the result is true for the smooth case~\cite{Steenbrink77, Esnault-Viehweg82}, it is enough to show
that the cohomology $H^q (X, \cO_X(L^{(k)}))$ remains the same after performing the first weighted blow-up in
Hirzebruch's resolution.

Let $\pi: Y \to X$ be the $(1,p)$-blow-up at a point of type 
$\frac{1}{d}(1,p)$ (see~\S\ref{sec:Qres} for the notation) and denote by $E$ its exceptional
divisor. Consider $D' = \pi^{*} D - n \pi^{*} H + n \ceil{\pi^{*} H}$ -- it is a Weil divisor
linearly equivalent to $n \ceil{\pi^{*} H}$ and, in addition, it is effective if $D$ is.
Denote by $L'^{(k)}$ its associated Weil divisor. There exist
$\rho_Y: \tilde{Y} \to Y$ another cyclic branched covering equivalent to $\rho_X:=\rho$ with ramification set given
by $D'$ and $\tilde{\pi}: \tilde{Y} \to \tilde{X}$ a proper map completing the following diagram:
\[
\begin{tikzcd}
&\tilde{X}\ar[d,"\rho_X" left] &\ar[l,"\tilde{\pi}" above]\tilde{Y}\ar[d,"\rho_Y"] &\\
L^{(k)},D\ar[r,hook] & X &\ar[l,"\pi"] Y \ar[r,hookleftarrow]& D',L'^{(k)}
\end{tikzcd}
\]
Locally, one can assume $D_1 = \{ x=0 \}$ and $D_2 = \{ y=0 \}$ with $m_1$ and $m_2$ possibly zero.
Denote by $c \in \frac{1}{d}\ZZ $ the rational number such that $\ceil{\pi^{*}H} - \pi^{*}H = c E$.
Note that $\pi^{*} D_1 = \hat D_1 + \frac{1}{d} E$ and 
$\pi^{*} D_2 = \hat D_2 + \frac{p}{d} E$. Then,
\begin{align*}
\pi^{*} L^{(k)} &= -k \pi^{*} H + \sum_{i=1}^r \floor{\frac{kn_i}{n}} \hat D_i
+ \left( \frac{1}{d} \floor{\frac{kn_1}{n}} + \frac{p}{d} \floor{\frac{kn_2}{n}} \right) E, \\
D' &= \sum_{i=1}^r n_i \hat D_i + \left( \frac{1}{d} n_1 + \frac{p}{d} n_2 + cn \right) E, \\
L'^{(k)} &= - k \ceil{\pi^{*}H} + \sum_{i=1}^r \floor{\frac{kn_i}{n}} \hat D_i
+ \floor{\frac{k \left(\frac{1}{d}n_1+\frac{p}{d}n_2+cn \right)}{n}} E.
\end{align*}

The $\QQ$-Weil divisor $L'^{(k)} - \pi^{*} L^{(k)}$ has support on $E$ and its coefficient is given by
$$
\gamma := -kc + \floor{ \frac{1}{d} \frac{kn_1}{n} + \frac{p}{d} \frac{kn_2}{n} + kc}
- \frac{1}{d} \floor{\frac{kn_1}{n}} - \frac{p}{d} \floor{\frac{kn_2}{n}} \in \frac{1}{d}\ZZ.
$$
Using the fact that $a-1 < \floor{a} \leq a, \forall a \in \QQ$, one shows that $\gamma$ belongs
to the open real interval 
$\left( -1, \frac{1+p}{d} \right)$.

Two cases arise. If $\gamma \leq 0$, then $\floor{\pi^{*}L^{(k)}} = L'^{(k)} + \ceil{\gamma} E = L'^{(k)}$. Otherwise,
since $K_\pi = (-1 + \frac{1+p}{d})E$, 
one has $\ceil{\pi^{*} L^{(k)} + K_\pi} = L'^{(k)} + \ceil{-\gamma-1+\frac{1+p}{d}}E = L'^{(k)}$.
In both cases, by Proposition~\ref{prop:comparison_cohomology}, $H^q(Y,\cO_Y(L'^{(k)})) \simeq H^q(X,\cO_X(L^{(k)}))$ and the claim
follows.
\end{proof}

\section{Local study: quasi-adjunction modules}
\label{sec:local}

The adjuntion ideals $\mathcal J_{\cC,P\frac{k}{n}}$ in Theorem~\ref{thm:conucleo_liso} for cyclic covers 
of $\PP^2$ ramified along reduced curves were generalized by Libgober~\cite{Libgober-characteristic} for 
abelian covers and reduced curves in $\PP^2$ introducing ideals of quasi-adjunction~\ref{rem:qaideals}.
Here we will introduce the analogous objects for the cyclic covers of $\PP^2_w$ ramified along non-reduced
curves. These objects are not ideals in general, but modules over the local rings $\cO_{\PP^2_w,P}$ which 
will be called quasi-adjunction modules.

Consider the local situation at $0\in X=\frac{1}{w_2}(w_0,w_1)\cong \CC^2/G$ a cyclic singularity, 
as defined in \S\ref{subsec:CyclicQuotient}, fixing $\zeta\in \CC$ a $w_2$-th primitive root of unity.
In \S\ref{subsec:CyclicQuotient} we defined the class $[g]:k\in\ZZ_{w_2}$ of a quasi-germ $g\in\cO_{X,\zeta^{k}}$ 
(which depends on the choice of~$\zeta$).
Also note that any $g\in \cO_{X,\zeta^{k}}$ defines a Weil divisor in $X$ denoted by $\ddiv_X(g)$.

Let $C=\sum_{i=1}^r n_iC_i$ be a local Weil divisor at $0$ given by the set of zeros of quasi-germs $C_i=\{f_i=0\}$.
Denote by $\pi: Y \to X$ a good $\Q$-resolution of $C\subset (X,0)$ and $\Gamma$ the dual graph associated with~$\pi$.
Then the total transform of $C$ and the relative canonical divisor can be written as
\begin{equation}\label{eq:notationEi-local}
\begin{aligned}
\pi^{*} C &= \hat{C} + \sum_{\v \in \Gamma} m_\v E_\v,
& \quad \pi^{*} C_i &= \hat{C}_i + \sum_{\v \in \Gamma} m_{\v i} E_\v, 
& \quad K_\pi &= \sum_{\v \in \Gamma} (\nu_\v-1) E_\v,
\end{aligned}
\end{equation}
where $m_\v=\sum_{i=1}^r n_im_{\v i}$.

\begin{defi}
\label{def:M}
For any integer $k\in \ZZ$ and rational $\lambda\in \QQ$ we introduce the following 
\emph{quasi-adjunction $\cO_X$-modules}:
\begin{equation}
\label{eq:Mlocal}
\cM_\pi(C^\lambda,k)=
\left\{ g \in \cO_{X,\zeta^{\s_{\lambda,k}}}
\vphantom{\sum_{i=1}^r}\right. 
\left|\ 
\mult_{E_\v} (\pi^{*}\ddiv_X(g)) > \sum_{i=1}^r \fpart{\lambda n_i} m_{\v i} - \nu_\v, \
\forall \v \in \Gamma
\right\}
\end{equation}
where $\s_{\lambda,k}:=k - |w| - \sum_{i=1}^r \floor{\lambda n_i} [f_i]$ and $|w|:=w_0+w_1+w_2$.
\end{defi}

In section~\ref{sec:examples} (Proposition~\ref{prop:indres}) it will be shown that $\cM_\pi(C^\lambda,k)$ 
does not depend on the given $\Q$-resolution. Also, the following upper semi-continuity property will be useful.

\begin{lemma}
\label{lemma:propsM}
Under the above conditions: 
\begin{enumerate}[label=\rm(\arabic*)]
 \item\label{lemma:propsM:epsilon}
 $\cM_\pi(C^\lambda,k)=\cM_\pi(C^{\lambda+\varepsilon},k)$ for a sufficiently small $\varepsilon>0$.
 \item\label{lemma:propsM:plus1} 
 $\cM_\pi(C^{\lambda+1},k)=\cM_\pi(C^{\lambda},k-[C])$, where $[C]$ is the class of $C$. 
 
 More generally, $\cM_\pi\left(C^{\lambda+\frac{1}{\gcd(n_i)}},k\right)=\cM_\pi\left(C^{\lambda},k-\left[\frac{1}{\gcd(n_i)}C\right]\right)$.

\end{enumerate}
\end{lemma}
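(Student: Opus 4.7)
The plan is to verify both claims by directly tracking the behaviour of the floor and fractional parts in Definition~\ref{def:M} under a small right perturbation of $\lambda$ and, respectively, under translation of $\lambda$ by a ``period'' of the family of fractional parts $\lambda n_i$.

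For part~\ref{lemma:propsM:epsilon} I would proceed in two steps. First I handle the eigenspace: for every $i$ and every $\varepsilon \in [0,\varepsilon_0)$ with $\varepsilon_0>0$ small enough, one has $\floor{(\lambda+\varepsilon)n_i} = \floor{\lambda n_i}$ (true regardless of whether $\lambda n_i \in \ZZ$, since adding a small positive quantity to a real number cannot shift its floor until it reaches the next integer), so $\s_{\lambda+\varepsilon,k} = \s_{\lambda,k}$ and the two candidate modules live in the common eigenspace $\cO_{X,\zeta^{\s_{\lambda,k}}}$. Second, for each $\v \in \Gamma$, the function $\varphi_\v(\lambda) := \sum_{i=1}^r \fpart{\lambda n_i} m_{\v i} - \nu_\v$ is right-continuous and non-decreasing on $[\lambda,\lambda+\varepsilon_0)$: it is linear with slope $\sum_i n_i m_{\v i}$ between integer crossings, and at $\lambda n_i \in \ZZ$ it jumps upward as one passes to $\lambda+\varepsilon$ from a fractional part $0$ to $\varepsilon n_i$. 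Since $\mult_{E_\v}(\pi^*\ddiv_X(g))$ takes values in a lattice $\tfrac{1}{M_\v}\ZZ$ determined by the intersection data of $\pi$, the strict inequality $\mult_{E_\v}(\pi^*\ddiv_X(g)) > \varphi_\v(\lambda)$ is equivalent to $\mult_{E_\v}(\pi^*\ddiv_X(g))$ exceeding a discrete threshold that remains constant as long as no lattice point of $\tfrac{1}{M_\v}\ZZ$ lies in $(\varphi_\v(\lambda),\varphi_\v(\lambda+\varepsilon)]$. Choosing $\varepsilon > 0$ small enough to ensure this for each of the finitely many $\v \in \Gamma$ yields $\cM_\pi(C^\lambda,k) = \cM_\pi(C^{\lambda+\varepsilon},k)$.

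For part~\ref{lemma:propsM:plus1} the proof is purely algebraic. Using that every $n_i$ is an integer, $\floor{(\lambda+1)n_i} = \floor{\lambda n_i} + n_i$ and $\fpart{(\lambda+1)n_i} = \fpart{\lambda n_i}$; the first identity gives
\[
\s_{\lambda+1,k} = k - |w| - \sum_{i=1}^r \bigl(\floor{\lambda n_i} + n_i\bigr) [f_i] = \s_{\lambda,k} - [C] = \s_{\lambda, k-[C]},
\]
by the linearity of $\s_{\cdot,\cdot}$ in $k$, while the second shows that the multiplicity inequalities defining $\cM_\pi(C^{\lambda+1},k)$ and $\cM_\pi(C^\lambda,k-[C])$ are pointwise identical, so the modules coincide. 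The generalisation is the same argument: setting $g := \gcd(n_i)$, each $\tfrac{n_i}{g}$ is an integer, hence $\floor{(\lambda+\tfrac{1}{g})n_i} = \floor{\lambda n_i} + \tfrac{n_i}{g}$ and $\fpart{(\lambda+\tfrac{1}{g})n_i} = \fpart{\lambda n_i}$, producing the class shift $\s_{\lambda+1/g,k} = \s_{\lambda,k-[\tfrac{1}{g}C]}$ while leaving the multiplicity conditions untouched.

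The only delicate point is the uniformity of $\varepsilon$ in part~\ref{lemma:propsM:epsilon} at indices $i$ with $\lambda n_i \in \ZZ$, where the fractional part has an upward jump; this is what forces the statement to be one-sided (the analogous equality with $\varepsilon < 0$ generally fails precisely at such $\lambda$) and it is settled by the combination of the lattice property of $\mult_{E_\v}(\pi^*\ddiv_X(g))$ with the finiteness of $\Gamma$. Part~\ref{lemma:propsM:plus1} is a formality once the integrality of the $n_i$ (respectively of $n_i/g$) is used.
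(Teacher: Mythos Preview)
Your argument for part~\ref{lemma:propsM:plus1} is essentially identical to the paper's: both simply observe that $\floor{(\lambda+1)n_i} = \floor{\lambda n_i}+n_i$ and $\fpart{(\lambda+1)n_i}=\fpart{\lambda n_i}$ (and likewise with $1$ replaced by $1/\gcd(n_i)$), which handles the eigenspace index and the multiplicity inequalities simultaneously.

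For part~\ref{lemma:propsM:epsilon} your approach is correct but genuinely different from the paper's. The paper argues via the Noetherian property: $\cM_\pi(C^\lambda,k)$ has finitely many generators, each of which satisfies the \emph{strict} inequalities, hence satisfies them with some uniform slack $\varepsilon'>0$; shrinking $\varepsilon$ so that the right-hand side grows by less than $\varepsilon'$ gives the non-trivial inclusion $\cM_\pi(C^\lambda,k)\subseteq\cM_\pi(C^{\lambda+\varepsilon},k)$. You instead exploit that the multiplicities $\mult_{E_\v}(\pi^*\ddiv_X(g))$ lie in a fixed lattice $\tfrac{1}{M_\v}\ZZ$, so the strict inequality against $\varphi_\v(\lambda)$ is unchanged as long as no lattice point falls into the interval $(\varphi_\v(\lambda),\varphi_\v(\lambda+\varepsilon)]$; right-continuity of $\varphi_\v$ and finiteness of $\Gamma$ then finish. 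Both routes are valid; the paper's is slightly more conceptual (it never needs to identify the lattice), while yours makes the mechanism more explicit and in fact proves both inclusions at once rather than treating one as trivial. One minor wording issue: at indices with $\lambda n_i\in\ZZ$ the fractional part goes continuously from $0$ to $\varepsilon n_i$, so there is no ``upward jump'' there (the downward jumps of $\{\cdot\}$ occur from the \emph{left}); this does not affect your argument, since what you actually use is right-continuity and local monotonicity, both of which hold.
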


\begin{proof}
First note that both are submodules of the same invariant module $\cO_{X,\zeta^{\s_{\lambda,k}}}$ since 
the class $\s_{\lambda,k}$ does not change for a small enough $\varepsilon>0$.
The ring $\cO_X$ is Noetherian, thus $\cM_\pi(C^\lambda,k)$ admits a finite system of generators. Since the conditions
in~\eqref{eq:Mlocal} 
$$
\mult_{E_\v} (\pi^{*}\ddiv_X(g)) > \sum_{i=1}^r \fpart{\lambda n_i} m_{\v i} - \nu_\v
$$
are given by a strict inequality there is a small positive $\varepsilon'>0$ for which the inequalities
$$
\mult_{E_\v} (\pi^{*}\ddiv_X(g)) > \sum_{i=1}^r \fpart{\lambda n_i} m_{\v i} - \nu_\v + \varepsilon'
$$
still hold. Taking $\varepsilon<\frac{\varepsilon'}{nmr}$ for $n:=\max\{n_j\}, m:=\max\{m_{\v j}\}$ such that
$$\fpart{(\lambda+\varepsilon) n_i} m_{\v i}< \fpart{\lambda n_i} m_{\v i} + \frac{\varepsilon'}{r}$$ 
the result follows.

As for part~\ref{lemma:propsM:plus1} note that $\s_{\lambda+1,k}=\s_{\lambda,k-[C]}$ and the set of conditions
in~\eqref{eq:Mlocal} does not change since $\left\{(\lambda+1)n_i\right\}=\left\{\lambda n_i\right\}$.
In general, the result follows from 
\begin{equation*}
\s_{\lambda+\frac{1}{\gcd(n_i)},k}=\s_{\lambda,k-\left[\frac{1}{\gcd(n_i)}C\right]}\text{ and } 
\left\{(\lambda+\frac{1}{\gcd(n_i)})n_i\right\}=\left\{\lambda n_i\right\}.
\qedhere
\end{equation*}
\end{proof}

\begin{remark}
Note that in the smooth case $X=(\CC^2,0)$ one can drop $k$ in the notation $\cM(C^\lambda,k)$ and simply write 
$\cM(C^\lambda)$, since both $\cO_{X,\zeta^{\s_{\lambda,k}}}=\cO_{X}$ and the conditions on $g \in \cO_{X}$
in~\eqref{eq:Mlocal} are independent of~$k$. Moreover, if $0\leq \lambda<\frac{1}{\max\{n_i\}}$, then $\cM(C^\lambda)$ 
coincides with the multiplier ideal associated with the principal ideal generated by an equation of $C$ (for a definition 
of multiplier ideals see for instance~\cite{Blickle-Lazarsfeld-informal}). This follows from the fact that 
$\left\{\lambda n_i\right\}=\lambda n_i$ for all $i=1,...,r$.
\end{remark}

\begin{remark}
\label{rem:qaideals}
Ideals of quasi-adjunction $A(j_1,\dots,j_r|m_1,\dots,m_r)$ ($0\leq j_i<m_i$) for $X=\CC^2$ were originally defined 
by A.Libgober in~\cite[section 2.3]{Libgober-characteristic}. Given a local divisor $C=\sum_{i=1}^r n_iC_i$ in 
$(\CC^2,0)$ and $\lambda=\frac{k}{d}$ such that $0\leq \lambda<\frac{1}{\max\{n_i\}}$, one can write $\cM(C^\lambda)$ as
a quasi-adjunction ideal choosing $m_i=d$, and $j_i+1=d(1-\lambda n_i)=d-kn_i$, that is,
$$
\cM(C^\lambda)=A(j_1,\dots,j_r|d,\dots,d).
$$
\end{remark}

\begin{remark}
\label{rem:independenciak}
In general, note that $\s_{\lambda,k}$ is independent of $\lambda$ as long as $0\leq \lambda<\frac{1}{\max\{n_i\}}$.
Moreover, in this case the expected monotonicity property holds
$$
\cO_{X,\zeta^{\s_{\lambda,k}}}=
\cO_{X,\zeta^{k-|w|}}\supseteq \cM(C^{\lambda_1},k)\supseteq \cM(C^{\lambda_2},k), \quad \text{ for } 
0\leq \lambda_1\leq \lambda_2<\frac{1}{\max\{n_i\}}.
$$
Example~\ref{ex:non-reduced} exhibits that this poperty does not need to hold for $\lambda\geq \frac{1}{\max\{n_i\}}$.
\end{remark}

\begin{ex}
Consider $X=\frac{1}{5}(2,3)$ and the irreducible Weil divisor $C$ defined as the zero set of $x^3-y^2$.
By Remark~\ref{rem:independenciak}, if $\lambda\in [0,1)$ one has a stratification for $\cM(C^\lambda,k)$
for the different $k=0,...,4$ as shown in Table~\ref{tab:MX}.

\begin{table}[ht]
\begin{center}
\begin{tabular}{|c|cccccccccccc|}
\hline
& $[0,\frac{1}{6})$ && $[\frac{1}{6},\frac{1}{3})$ && $[\frac{1}{3},\frac{1}{2})$ 
&& $[\frac{1}{2},\frac{2}{3})$ && $[\frac{2}{3},\frac{5}{6})$ && $[\frac{5}{6},1)$ &\\ \hline
0 & $\cO_X$ &$\supsetneq$ & $\langle xy,x^5,y^5\rangle$ & && && &&  & &\\ \hline
1 &  $\cO_{X,\zeta^1}$ && & && && && &&\\ \hline
2 &  $\cO_{X,\zeta^2}$ && && &$\supsetneq$ & $\langle y^4\rangle$ && && &\\ \hline
3 &  $\cO_{X,\zeta^3}$ && && && &$\supsetneq$ & $\langle x^4\rangle$ && &\\ \hline
4 &  $\cO_{X,\zeta^4}$ && && && && &$\supsetneq$ & $\langle y^3\rangle$ &\\ \hline
\end{tabular}
\end{center}
\caption{Stratification of $\cM(C^\lambda,k)$ for $\lambda\in [0,1)$}
\label{tab:MX}
\end{table}
\end{ex}

\begin{ex}
\label{ex:non-reduced}
As a second example we will consider a non-reduced curve in a smooth surface.
Take the divisor $\cC=\cC_1+2\cC_2$ in $(\CC^2,0)$, where $\cC_1$ is an ordinary cusp $\{y^2+x^3=0\}$, and $\cC_2$ is 
a smooth germ tangent to $\cC_1$, say $\{y=0\}$. Following the notation in~\eqref{eq:notationEi-local}, $\Gamma=\{\v \}$ 
since one weighted blow-up with $w=(2,3)$ resolves the curve. Note that $m_{\v 1}=6$, $m_{\v 2}=3$,
$n_{1}=1$, $n_2=2$, and $\nu_{\v}=5$, then according to Definition~\ref{def:M} one obtains the following condition
$$
\mult_{E_\v}\pi^*x^iy^j=2i+3j>6\{\lambda\}+3\{2\lambda\}-5.
$$
\begin{figure}[ht]
\begin{center}
\begin{tikzpicture}
\draw[line width=1.2] (-1,.5) node[above left] {$\mathcal{C}_1$} to[out=-60,in=180] (0,0) to[out=180,in=60] (-1,-.5);
\draw[line width=1.2] (-1,0)--(.5,0) node[right] {$\mathcal{C}_2$};

\draw (2.5,0)--(5.5,0) node[right] {$E_\nu$};
\fill (3,0) circle [radius=.1] node[below left] {$\frac{1}{2}(1,1)$};
\fill (5,0) circle [radius=.1] node[below] {$\frac{1}{3}(1,1)$};
\draw[line width=1.2] (3,-1)--(3,1) node[right] {$\mathcal{C}_2$};
\draw[line width=1.2] (4,-1)--(4,1) node[right] {$\mathcal{C}_1$};
\end{tikzpicture}
\caption{$\QQ$-Resolution of $\mathcal{C}$}
\end{center}
\end{figure}	
The jumping values for $\lambda\in [0,1)$ are provided below
$$
\cM(\cC^\lambda)=
\begin{cases}
\CC\{x,y\} & \text{ for } \lambda\in [0,\frac{5}{12}),\\
\mathfrak{m}=(x,y) & \text{ for } \lambda\in [\frac{5}{12},\frac{1}{2}),\\
\CC\{x,y\} & \text{ for } \lambda\in [\frac{1}{2},\frac{2}{3}),\\
\mathfrak{m} & \text{ for } \lambda\in [\frac{2}{3},\frac{5}{6}),\\
(x^2,y) & \text{ for } \lambda\in [\frac{5}{6},\frac{11}{12}),\\
\mathfrak{m}^2 & \text{ for } \lambda\in [\frac{11}{12},1).\\
\end{cases}
$$
As mentioned in Remark~\ref{rem:independenciak} the sequence $\cM(C^\lambda)$ is decreasing for
$\lambda\in [0,\frac{1}{2})$. However this is not true for $\lambda\in [0,1)$.
\end{ex}

\section{Global study: irregularity of cyclic coverings}
\label{sec:global}
Let $w_0,w_1,w_2$ be three pairwise coprime positive integers and let $X:=\PP_w^2$ be the weighted projective plane
with $w=(w_0,w_1,w_2)$. The section is split into three parts. First, an interpretation of $H^0(Y,\cO_Y(D'))$ for a 
blow-up $Y$ of $X$ and $\QQ$-divisors in terms of quasi-polynomials. Then the second cohomology groups of the 
cover are studied using Serre's duality. Finally a formula for the irregularity is described with the help of
the Euler characteristic and the Riemann-Roch formula on singular normal surfaces.

\subsection{Global sections and weighted blow-ups}
\mbox{}

Consider $\pi: Y \to X$ a composition of $s$ weighted blowing-ups with exceptional divisors $E_1,\ldots,E_s$.

\begin{lemma}\label{lemma:h0-weighted}
Given $D \in \Weil_{\QQ}(X)$, the space $H^0 (X,\!\cO_{X}(D))$ is isomorphic to $\CC[x,y,z]_{w,d}$,
the $w$-homogeneous polynomials of degree $d := \deg_w(\floor{D})$.
\end{lemma}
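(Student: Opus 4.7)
The plan is to reduce the statement to the tautological description of global sections of divisorial sheaves on the weighted projective plane.

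First, by the convention~\eqref{eq:OD}, $\cO_X(D) = \cO_X(\floor{D})$, so I would replace $D$ by its integer part and assume $D$ is an integral Weil divisor of $w$-degree $d$. Next, I would invoke the structure of the Weil class group of $\PP^2_w$: since the weights are pairwise coprime, the coordinate divisors $D_i = \{x_i = 0\}$ satisfy $w_j D_i \sim w_i D_j$ via the meromorphic functions $x_i^{w_j}/x_j^{w_i}$, and a direct check shows the resulting class group is isomorphic to $\ZZ$ with isomorphism given by $\deg_w$. Combined with the fact that $\cO_X(\cdot)$ is invariant under linear equivalence of integral Weil divisors (cf.~Example~\ref{ex:QNC:LEquiv}), this reduces the computation to a single sheaf depending only on $d$.

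Second, I would make the identification with polynomials explicit. Provided $\CC[x,y,z]_{w,d}$ is nonzero, pick any nonzero $F \in \CC[x,y,z]_{w,d}$ and set $D_F := \ddiv(F)$, an effective Weil divisor of $w$-degree $d$. Then $D \sim D_F$, and Proposition~\ref{prop:H0D} identifies
\[
H^0(X, \cO_X(D)) \cong H^0(X, \cO_X(D_F)) = \{ h \in K(X) \mid (h) + D_F \geq 0 \}.
\]
The multiplication map $h \mapsto h \cdot F$ sends this bijectively onto the space of $w$-homogeneous meromorphic functions $G$ on $\PP^2_w$ of $w$-degree $d$ with $\ddiv(G) \geq 0$.

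Third, I would conclude via Hartogs' extension. Such a $G$ pulls back through the quotient $p: \CC^3 \setminus \{0\} \to \PP^2_w$ of~\eqref{covering} to a holomorphic function of weight $d$ on $\CC^3 \setminus \{0\}$; since the complement has codimension $3$ in $\CC^3$, Hartogs' theorem extends it to a holomorphic function on all of $\CC^3$, necessarily an element of $\CC[x,y,z]_{w,d}$. The edge case $\CC[x,y,z]_{w,d} = 0$ (when $d$ cannot be expressed as a non-negative integral combination of the $w_i$) follows by running the argument in reverse: a nonzero global section would yield an effective integral Weil divisor of degree $d$, and the same Hartogs argument would produce a nonzero polynomial of degree $d$, a contradiction. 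The main technical subtlety throughout is the Weil-versus-Cartier distinction at the three cyclic quotient singular points of $\PP^2_w$; this is controlled by the codimension-$2$ nature of the singular locus and the definition $\cO_X(D) = i_{*} \cO_{\reg(X)}(D|_{\reg(X)})$, which makes the class group and Hartogs arguments interact cleanly.
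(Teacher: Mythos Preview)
Your proof is correct and follows the same skeleton as the paper's: reduce to the integral case via $\cO_X(D)=\cO_X(\floor{D})$, then identify global sections with $w$-homogeneous polynomials of the appropriate degree. The paper's own proof is a two-line appeal to the fact that the integral case is well-known, together with the explicit description of the isomorphism as $(h)\mapsto$ the polynomial cutting out $(h)+\floor{D}$; you have unpacked that ``well-known'' step via the class group computation $\Weil(\PP^2_w)/{\sim}\cong\ZZ$, Proposition~\ref{prop:H0D}, and a Hartogs extension across the origin in $\CC^3$. This is not a different route so much as a fleshed-out version of the same one, and your treatment of the edge case $\CC[x,y,z]_{w,d}=0$ is a nice touch the paper omits.
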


\begin{proof}
It is a well-known result for integral Weil divisors. The general rational case follows from the fact that
by definition $\cO_X(D) = \cO_X(\floor{D})$. The isomorphism converts $(h)$ into the $w$-homogeneous polynomial defined by
$(h) + \floor{D} \geq 0$.
\end{proof}

\begin{prop}\label{prop:H0YD}
For any $D \in \Weil_{\QQ}(X)$, $m_i \in \QQ$, and $D' \in \Weil_{\QQ}(Y)$, one has:
\begin{enumerate}[label=\rm(\arabic*)]
\item \label{item1-lemma-h0} Under the previous isomorphism $H^0(Y,\cO_Y(\pi^{*}D + \sum_{i=1}^{s} m_i E_i))$ can be identified
with the subspace $\{ F \in \CC[x,y,z]_{w,d} \mid \mult_{E_i} \pi^{*} F + m'_i \geq 0, \forall i \}$,
where $m'_i := m_i + \mult_{E_i} \pi^{*} \fpart{D}$, $i=1,\ldots,s$.
\item \label{item2-lemma-h0} The cohomology $H^0(Y,\cO_Y(D'))$ consists of all $w$-homogeneous polynomials~$F$ of degree
$\deg_w \floor{\pi_{*} D'}$ such that $\mult_{E_i} \pi^{*} F \geq \mult_{E_i} ( \pi^{*} \floor{\pi_{*} D'} - D' )$
for all $i=1,\ldots,s$.
\end{enumerate}
\end{prop}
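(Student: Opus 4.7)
The plan is to derive both parts from Proposition~\ref{prop:H0D} and the bijection of Lemma~\ref{lemma:h0-weighted}, together with a careful tracking of multiplicities under the weighted blow-up $\pi$. For part~\ref{item1-lemma-h0}, applying Proposition~\ref{prop:H0D} to $(Y,D')$ with $D' := \pi^{*}D + \sum_i m_i E_i$ gives
\[
H^0(Y,\cO_Y(D')) = \{h\in K(Y)\mid (h)_Y + D' \geq 0\}.
\]
Since $\pi$ is birational, $K(Y) = K(X)$, and Lemma~\ref{lemma:h0-weighted} provides a bijection $F \leftrightarrow h$ between $\CC[x,y,z]_{w,d}$ and $H^0(X,\cO_X(D))$, determined by the divisor identity $(F)_X = (h)_X + \floor{D}$ on $X$. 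Pulling back this identity yields $\pi^{*}(F)_X = (h)_Y + \pi^{*}\floor{D}$ in $\Weil_{\QQ}(Y)$, and in particular
\[
\mult_{E_i}((h)_Y) = \mult_{E_i}(\pi^{*}F) - \mult_{E_i}(\pi^{*}\floor{D}).
\]

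Next, I would translate the condition $(h)_Y + D' \geq 0$ component by component. On the strict transform of any prime divisor $V'\subset X$, the inequality reduces to $(h)_X + D \geq 0$ at $V'$, which is automatic for every $F\in \CC[x,y,z]_{w,d}$ since $F$ corresponds under the bijection to an element of $H^0(X,\cO_X(D))$. At an exceptional divisor $E_i$, using the decomposition $\mult_{E_i}(\pi^{*}D) = \mult_{E_i}(\pi^{*}\floor{D}) + \mult_{E_i}(\pi^{*}\fpart{D})$, the inequality becomes
\[
\mult_{E_i}(\pi^{*}F) - \mult_{E_i}(\pi^{*}\floor{D}) + \mult_{E_i}(\pi^{*}\floor{D}) + \mult_{E_i}(\pi^{*}\fpart{D}) + m_i \geq 0,
\]
where the two $\mult_{E_i}(\pi^{*}\floor{D})$ terms cancel, leaving $\mult_{E_i}(\pi^{*}F) + m'_i \geq 0$ and establishing part~\ref{item1-lemma-h0}. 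Part~\ref{item2-lemma-h0} then follows by reduction: setting $D := \pi_{*}D'$ and observing that $D' - \pi^{*}D$ is supported on the exceptional locus yields $D' = \pi^{*}D + \sum_i m_i E_i$ with $m_i := \mult_{E_i}(D') - \mult_{E_i}(\pi^{*}D)$; part~\ref{item1-lemma-h0} identifies the space with polynomials of $w$-degree $d = \deg_w\floor{\pi_{*}D'}$ satisfying $\mult_{E_i}(\pi^{*}F) + m'_i \geq 0$, and expanding $-m'_i = \mult_{E_i}(\pi^{*}\floor{\pi_{*}D'} - D')$ recovers the stated form.

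The main subtlety is the interplay between integer and $\QQ$-valued multiplicities along the exceptional divisors: although $(h)_Y$ is necessarily integral, the $\QQ$-pullbacks $\pi^{*}(F)_X$ and $\pi^{*}\floor{D}$ may individually carry fractional coefficients along $E_i$, with only their difference landing in $\Weil(Y)$. Once this bookkeeping is in place, the remaining manipulations reduce to routine arithmetic with floors and fractional parts.
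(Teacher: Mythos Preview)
Your proof is correct and follows essentially the same route as the paper's own argument: both invoke Proposition~\ref{prop:H0D} on $Y$, use the birational identification $K(Y)=K(X)$ to write $h=\pi^{*}h'$, translate the positivity condition $(h)+D'\geq 0$ into the non-exceptional part $(h')+D\geq 0$ (handled via Lemma~\ref{lemma:h0-weighted}) plus the exceptional inequalities, and then deduce~\ref{item2-lemma-h0} from~\ref{item1-lemma-h0} by setting $D=\pi_{*}D'$ and $m_i=\mult_{E_i}(D'-\pi^{*}D)$. The only cosmetic difference is the order of presentation (the paper states the reduction of~\ref{item2-lemma-h0} to~\ref{item1-lemma-h0} first), and your explicit remark on the cancellation of the possibly fractional terms $\mult_{E_i}(\pi^{*}\floor{D})$ is a welcome clarification of a point the paper leaves implicit.
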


\begin{proof}
The statement~\ref{item2-lemma-h0} is a direct consequence of~\ref{item1-lemma-h0} after
writing $D'$ in the form $\pi^{*}D + \sum_{i=1}^{s} m_i E_i$ for $D=\pi_{*} D'$ and $m_i = \mult_{E_i}(D'-\pi^{*} D)\in\QQ$.

In order to prove~\ref{item1-lemma-h0}, let us consider $h \in K(Y)$. According to Proposition~\ref{prop:H0D},
its associated divisor $(h)$ belongs to $H^0(Y,\cO_Y(\pi^{*}D + \sum_{i=1}^{s} m_i E_i))$ if and only if
\begin{equation}\label{eq:condition-h}
(h) + \pi^{*}D + \sum_{i} m_i E_i \geq 0.
\end{equation}
Since $K(Y)$ and $K(X)$ are isomorphic under $\pi$, the global section $h$ can be written as
$h = \pi^{*} h'$ for some $h' \in K(X)$. The condition from~\eqref{eq:condition-h}
in terms of $h'$ is $(h') + D \geq 0$, or equivalently $h \in H^0(X,\cO_X(D))$, and
\begin{equation}\label{eq:condition-h'}
\pi^{*} ((h')+D) + \sum_i m_i E_i = \pi^{*} ((h') + \floor{D}) + \pi^{*} \fpart{D} + \sum_{i} m_i E_i \geq 0.
\end{equation}
Let $F$ be the $w$-homogeneous polynomial defined by the effective $(h')+\floor{D}$.
Using the isomorphism $H^0(X,\cO_X(D)) \cong \CC[x,y,z]_{w,d}$ described in Lemma~\ref{lemma:h0-weighted},
the inequality~\eqref{eq:condition-h'} simply becomes $\mult_{E_i} \pi^{*} F + m'_i \geq 0$,
$\forall i=1,\ldots,s$, as claimed.
\end{proof}

\subsection{The second cohomology group}\label{sec:h2}
\mbox{}

Let $\rho_X: \tilde{X} \to \mathbb{P}^2_w =:X$ be a cyclic branched covering ramifying on a (non-necessarily) reduced curve
$\mathcal{C} = \sum_{j=1}^r n_j \mathcal{C}_j$ with $d:= \deg_w \mathcal{C}$ sheets, denote by $d_j = \deg_w \mathcal{C}_j$
so that $\sum_{j=1}^r n_j d_j = d$. Assume $\mathcal{C} \sim dH$ where $H$ is a divisor of $w$-degree one --it is neither
effective nor reduced in general.
In order to use the power of section~\ref{sc:Esnault}, one needs to deal with a $\QQ$-normal crossing divisor. Hence, let 
$\pi: Y \to X$ be an embedded $\Q$-resolution of $\mathcal{C}$ and consider the maps $\tilde{\pi}: \tilde{Y} \to \tilde{X}$ 
and $\rho_Y: \tilde{Y} \to Y$ completing the following commutative diagram.
\[
\begin{tikzcd}
\MySymb{dr}\tilde{X}\ar[d,"\rho_X" left] &\ar[l,"\tilde{\pi}" above]\tilde{Y}\ar[d,"\rho_Y"]\\
X&\ar[l,"\pi"] Y
\end{tikzcd}
\]
Denote by $S$ the points of $\mathbb{P}^2_w$ that 
have been blown up in the resolution $\pi: Y \to X$
and $\Gamma_P$ the dual graph associated with the resolution of $P \in \Si$.
Then the total transform of $\mathcal{C}$ and $H$ and the relative canonical divisor can be written as
\begin{equation}\label{eq:notationEi}
\begin{aligned}
\pi^{*} \mathcal{C} &= \hat{\mathcal{C}} + \sum_{P \in \Si} \sum_{\v \in \Gamma_P} m_\v E_\v,
& \quad \pi^{*} \mathcal{C}_j &= \hat{\mathcal{C}}_j + \sum_{P \in \Si} \sum_{\v \in \Gamma_P}
m_{\v j} E_\v, \\
\pi^{*} H &= \hat{H} + \sum_{P \in \Si} \sum_{\v\in\Gamma_P} \b_\v E_\v,
& K_\pi &= \sum_{P \in S} \sum_{\v \in \Gamma_P} (\nu_\v-1) E_\v.
\end{aligned}
\end{equation}
Clearly one has the relations $\hat{\mathcal{C}} = \sum_{j=1}^r n_j \hat{\mathcal{C}}_j$
and $m_\v = \sum_{j=1}^r n_j m_{\v j}$.
Finally consider the divisor $L^{(k)}$ corresponding to the covering $\rho_Y$, see Theorem~\ref{thm:Esnault}.

\begin{theorem}\label{thm:h2Lk}
The dual space $H^2(Y,\cO_{Y}(L^{(k)}))^{*}$ is isomorphic to the $\CC$-vector space
\[
\left\{ F \in \CC[x,y,z]_{w,s_k-|w|}
\vphantom{\sum_{j=1}^r}\right. 
\left|\ 
\mult_{E_\v} \pi^{*} F > \sum_{j=1}^r \fpart{\frac{kn_j}{d}} m_{\v j} - \nu_\v, \
\forall \v \in \Gamma_P, \, \forall P \in \Si
\right\} 
\]
where
$\CC[x,y,z]_{w,l}$ is the vector space of $w$-homogeneous polynomials of degree $l$, 
\[
s_k := \sum_{j=1}^r \fpart{\frac{kn_j}{d}} d_j \in \mathbb{Z},\text{ and }|w|=w_0+w_1+w_2.
\]
If $\mathcal{C}$ is reduced, then $s_k = k$ and $\displaystyle\sum_{j=1}^r \fpart{\frac{kn_j}{d}} m_{\v j} = \frac{km_\v}{d}$.
\end{theorem}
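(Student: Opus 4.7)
The plan is to compute the dual space $H^2(Y,\cO_Y(L^{(k)}))^{*}$ via Serre duality on the $V$-manifold $Y$, then use Proposition~\ref{prop:H0YD}(2) to realize the resulting $H^0$ as a space of $w$-homogeneous polynomials with explicit conditions at the exceptional divisors. First I would apply Serre duality for Weil divisors on the cyclic $V$-manifold $Y$ (as in the proof of Proposition~\ref{prop:comparison_cohomology}) to get
$$H^2(Y,\cO_Y(L^{(k)}))^{*} \cong H^0(Y,\cO_Y(K_Y-L^{(k)})).$$
Using $K_Y=\pi^{*}K_X+K_\pi$ together with $\pi_{*}\hat{\mathcal{C}}_j = \mathcal{C}_j$, $\pi_{*}\hat H = H$ and $\pi_{*}E_\v = 0$, one computes $\pi_{*}(K_Y-L^{(k)}) = K_X+kH-\mathcal{C}^{(k)} = (k-|w|)H-\mathcal{C}^{(k)}$, whose $w$-degree is $s_k-|w|$ after using $\sum_j n_j d_j = d$ to rewrite $\sum_j \floor{\frac{kn_j}{d}}d_j = k-s_k$.

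Next I would apply Proposition~\ref{prop:H0YD}(2) to $D'=K_Y-L^{(k)}$, which identifies $H^0(Y,\cO_Y(D'))$ with the set of $w$-homogeneous polynomials $F$ of degree $s_k-|w|$ satisfying $\mult_{E_\v}\pi^{*}F \geq \mult_{E_\v}(\pi^{*}\floor{\pi_{*}D'}-D')$ for every exceptional $E_\v$. Expanding the right-hand side using the expression of $L^{(k)}$ on $E_\v$ (namely $\floor{\frac{km_\v}{d}-k\b_\v}$), the identity $\sum_j \frac{kn_j}{d}m_{\v j} = \frac{km_\v}{d}$, and $\floor{a} = a-\fpart{a}$, a direct calculation yields
$$\mult_{E_\v}(\pi^{*}\floor{\pi_{*}D'}-D') = \sum_{j=1}^{r}\fpart{\frac{kn_j}{d}}m_{\v j} - \nu_\v + 1 - \fpart{\frac{km_\v}{d}-k\b_\v}.$$

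Finally, one must convert this non-strict inequality into the strict form $\mult_{E_\v}\pi^{*}F > \sum_j \fpart{\frac{kn_j}{d}}m_{\v j}-\nu_\v$ of the theorem. The two bounds differ by $1-\fpart{\frac{km_\v}{d}-k\b_\v}$, and they select the same polynomials because the admissible values of $\mult_{E_\v}\pi^{*}F$ lie in a single coset of $\ZZ$, determined by the $w$-degree of $F$ and the discrepancies of $\pi$, that contains no value strictly between them. The reduced case follows at once: when $n_j=1$ and $0\le k<d$, one has $\fpart{\frac{k}{d}} = \frac{k}{d}$, giving $s_k=k$ and $\sum_j \fpart{\frac{kn_j}{d}}m_{\v j} = \frac{km_\v}{d}$. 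The main obstacle I anticipate is this last passage from $\ge$ to $>$: in the smooth setting of Theorem~\ref{thm:conucleo_liso} the equivalence is elementary because $\b_\v=0$ and the relevant multiplicities are integer, but in the $V$-manifold setting the rational denominators coming from the weighted blow-ups at quotient singularities must be tracked carefully to ensure the narrow gap between the two bounds is indeed empty.
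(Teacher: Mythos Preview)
Your proposal is correct and follows essentially the same route as the paper: Serre duality, then Proposition~\ref{prop:H0YD}\ref{item2-lemma-h0} applied to $D'=K_Y-L^{(k)}$, the computation of $\deg_w\lfloor\pi_*D'\rfloor=s_k-|w|$, and finally the passage from $\geq$ to $>$ via integrality. The paper makes that last step precise by observing that the divisor $(F)-kH-K_X+\sum_j\lfloor kn_j/d\rfloor\mathcal C_j$ has $w$-degree~$0$, hence is principal on $\mathbb P^2_w$, so its pull-back has integer multiplicity at every $E_\v$; this gives exactly your ``single coset of $\mathbb Z$'' containing the non-strict bound, and since the strict and non-strict bounds differ by $1-\{km_\v/d-k\b_\v\}\in(0,1]$ the two conditions select the same polynomials.
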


\begin{proof}
By Serre's duality $H^2(Y,\cO_{Y}(L^{(k)}))^{*} \cong H^0(Y,\cO_{Y}(K_Y-L^{(k)}))$.
We plan to apply Proposition~\ref{prop:H0YD} to the divisor $D' := K_Y - L^{(k)}$.
Recall that $K_Y = \pi^{*} K_X + K_{\pi}$.

According to~\eqref{eq:notationEi} the divisor $\pi^{*} \mathcal{C} - d \pi^{*} H$
is decomposed as
$$
- d \hat{H} + \sum_{j=1}^r n_j \hat{\mathcal{C}}_j + \sum_{P \in \Si}
\sum_{\v \in \Gamma_P} (m_\v - d \b_\v) E_\v
$$
and hence by definition $L^{(k)}$ is the divisor
\begin{equation*}
L^{(k)} = -k \hat{H} + \sum_{j=1}^r \floor{\frac{kn_j}{d}} \hat{\mathcal{C}}_j +
\sum_{P \in S} \sum_{\v \in \Gamma_P} \floor{\frac{k(m_\v - d\b_\v)}{d}} E_\v,
\end{equation*}
that can be easily written as
\begin{equation}\label{eq:Lk}
L^{(k)} = \pi^{*} \bigg( -kH + \sum_{j=1}^r \floor{\frac{kn_j}{d}} \mathcal{C}_j \bigg) +
\sum_{P \in S} \sum_{\v \in \Gamma_P} \bigg( \floor{\frac{k(m_\v-d\b_\v)}{d}} + k \b_\v - e_{\v k} \bigg) E_\v,
\end{equation}
where $e_{\v k} = \sum_{j=1}^r \floor{\frac{kn_j}{d}} m_{\v j}$.
Then 
\begin{equation}\label{eq:divisor-Ck}
\floor{\pi_{*} D'} = \floor{\pi_{*}(K_Y-L^{(k)})} = K_X + k H
- \sum_{j=1}^r \floor{\frac{k n_j}{d}} \mathcal{C}_j,
\end{equation}
which has $w$-degree
\begin{equation}\label{eq:degree-sk}
k - |w| - \sum_{j=1}^r \floor{\frac{kn_j}{d}} d_j = \sum_{j=1}^r \fpart{\frac{kn_j}{d}} d_j - |w|
= s_k - |w|.
\end{equation}
Note that if the curve $\mathcal{C}$ is reduced and thus all $n_i=1$, then $s_k = k$ and $e_{\v k} = 0$.
Moreover,
$$
\pi^{*} \floor{\pi_{*}D'} - D' = - K_\pi +
\sum_{P \in S} \sum_{\v \in \Gamma_P} \bigg( \floor{\frac{k(m_\v-d\b_\v)}{d}} + k \b_\v - e_{\v k} \bigg) E_\v.
$$

From Proposition~\ref{prop:H0YD}\ref{item2-lemma-h0}, $F \in H^0(Y,\cO_{Y}(K_Y-L^{(k)}))$ if and only if
for all $\v \in \Gamma_P$ and $P \in \Si$ one has
\begin{equation}\label{eq:condition-F}
\mult_{E_\v} \pi^{*} F \geq
\floor{\frac{k m_\v}{d}-k\b_\v} + k\b_\v - e_{\v k}
- (\nu_\v-1).
\end{equation}
It remains to show that this condition is equivalent to the one given in the statement.
If $F \in \CC[x,y,z]$ is $w$-homogeneous of degree $s_k-|w| = k - |w| - \sum_j \floor{\frac{kn_j}{d}} d_j$,
then the divisor $(F)-kH+K_X+\sum_{j=1}^r \floor{\frac{kn_j}{d}} \mathcal{C}_j$
has degree~$0$ and thus $\mult_{\v} \pi^{*} F - k\b_\v + \nu_\v-1 + e_{\v k}$ is an integer.
Hence the condition~\eqref{eq:condition-F} can be rewritten as
$$
\ceil{\mult_{E_\v} \pi^{*} F - \frac{km_\v}{d} + e_{\v k} + \nu_\v-1} \geq 0
$$
or equivalently $\mult_{E_\v} \pi^{*} F > \frac{km_\v}{d} - e_{\v k} - \nu_\v$.
Since $m_\v = \sum_{j=1}^r n_j m_{\v j}$ the latter term equals
$\sum_{j=1}^r \fpart{\frac{kn_j}{d}} m_{\v j}$ and the proof is complete.
\end{proof}

\subsection{Irregularity of a cyclic cover of a weighted projective plane}
\mbox{}

Let $\rho_X:\tilde X\to X$ be a cyclic covering and $\pi:Y\to X$ a $\Q$-resolution as defined in section~\ref{sec:h2}.
Recall that
$$
H^1(\tilde X,\CC)=H^1(\tilde Y,\CC)=H^1(Y,\cO_Y)\oplus H^0(Y,\Omega^1_Y)=H^1(Y,\cO_Y)\oplus \overline{H^1(Y,\cO_Y)}.
$$
In this section the dimension of the $e^{\frac{2\pi ik}{d}}$-eigenspace of $H^1(Y,\mathcal{O}_Y)$ will be computed
using the tools developed in section~\ref{sc:Esnault}. Hence the classical formula (see~Theorem~\ref{thm:conucleo_liso}) 
will be generalized in three directions: first, it applies to a surface with quotient singularities 
(the weighted projective plane); second, the ramification locus need not be completely 
resolved (a partial resolution is enough); and finally, the result also applies to non-reduced ramification divisors. For 
this latter purpose we define the following divisor $\mathcal{C}^{(k)}$ which is trivial when the ramification divisor is 
reduced:
$$
\mathcal{C}^{(k)} := \sum_{j=1}^r \floor{\frac{kn_j}{d}} \mathcal{C}_j.
$$
The divisor $\cC^{(k)}$ appeared implicitly in~\eqref{eq:divisor-Ck}.
Recall there is a natural isomorphism between 
$H^0\left(\PP^2_w,\mathcal{O}_{\PP^2_w}\left( kH+K_{\PP^2_w} - \mathcal{C}^{(k)}\right) \right)$
and the vector space of $w$-homogeneous polynomials of total degree $s_k - |w|$ (see~\eqref{eq:degree-sk} and statement of
Theorem~\ref{thm:h2Lk}).

\begin{theorem}\label{thm:conucleo_singular}
For $0\leq k<d$, let
\[
\pi^{(k)}: H^0\left(\PP^2_w,\mathcal{O}_{\PP^2_w}\left( kH+K_{\PP^2_w} - \mathcal{C}^{(k)}\right) \right) 
\longrightarrow \bigoplus_{P \in S}
\frac{\mathcal{O}_{\PP^2_w,P}\left( kH+K_{\PP^2_w} - \mathcal{C}^{(k)}\right)}{\mathcal{M}_{\mathcal{C},P}^{(k)}}
\]
be the evaluation map where $\mathcal{M}_{\mathcal{C},P}^{(k)}:=\mathcal{M}_P(\cC^{1/d},k)$ 
is defined as the following quasi-adjunction $\mathcal{O}_{\PP^2_w,P}$-module
\[
\mathcal{M}_{\mathcal{C},P}^{(k)}\!=\!
\left\{ g \in\mathcal{O}_{\PP^2_w,P}\left( kH+K_{\PP^2_w} - \mathcal{C}^{(k)}\right)
\vphantom{\sum_{j=1}^r}\!\right.\!
\left|\ \mult_{E_\v} \pi^* g > 
\sum_{j=1}^r \fpart{\frac{kn_j}{d}} m_{\v j} -\! \nu_\v, \ \forall \v \in \Gamma_P\! \right\}\!.
\]
Then,
\[
\dim H^1(Y,\mathcal{O}_Y(L^{(k)}))=\dim\coker\pi^{(k)}.
\]
In particular, if $\cC$ is reduced, then
\[
\pi^{(k)}: H^0\left(\PP^2_w,\mathcal{O}_{\PP^2_w}\left( kH+K_{\PP^2_w}\right) \right) 
\longrightarrow \bigoplus_{P \in S}
\frac{\mathcal{O}_{\PP^2_w,P}\left( kH+K_{\PP^2_w}\right)}{\mathcal{M}_{\mathcal{C},P}^{(k)}}
\]
where
$$
\mathcal{M}_{\mathcal{C},P}^{(k)}:=
\left\{ g \in\mathcal{O}_{\PP^2_w,P}\left( kH+K_{\PP^2_w}\right)
\vphantom{\frac{k m_{\v}}{d}}\right.
\left|\ \mult_{E_\v} \pi^* g > 
\frac{k m_{\v}}{d} - \nu_\v, \ \forall \v \in \Gamma_P \right\}.
$$
\end{theorem}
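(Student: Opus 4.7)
My plan is to convert the question on $Y$ into a cohomological calculation on $\PP^2_w$ by combining Serre duality, a pushforward identification, and the long exact sequence attached to a quasi-adjunction subsheaf. Since $Y$ is a $V$-manifold, Serre duality gives $\dim H^1(Y,\cO_Y(L^{(k)}))=\dim H^1(Y,\cO_Y(K_Y-L^{(k)}))$, so it suffices to match the right-hand side with $\dim\coker\pi^{(k)}$.

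The first step is to identify the pushforward $\pi_{*}\cO_Y(K_Y-L^{(k)})$ with the subsheaf $\mathcal{J}^{(k)}\subseteq\cO_{\PP^2_w}(kH+K_{\PP^2_w}-\cC^{(k)})$ whose stalk at $P\in S$ is $\cM^{(k)}_{\cC,P}$. This follows by applying Proposition~\ref{prop:H0YD}\ref{item2-lemma-h0} stalkwise to $D'=K_Y-L^{(k)}$: after plugging in the explicit formula~\eqref{eq:Lk} for $L^{(k)}$, the condition $\mult_{E_\v}\pi^{*}g\geq\mult_{E_\v}(\pi^{*}\lfloor\pi_{*}D'\rfloor-D')$ reduces, by the same integrality manipulation used in the proof of Theorem~\ref{thm:h2Lk}, to $\mult_{E_\v}\pi^{*}g>\sum_{j}\fpart{kn_j/d}m_{\v j}-\nu_\v$, which is exactly the condition defining $\cM^{(k)}_{\cC,P}$.

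The next step is a Grauert--Riemenschneider-type vanishing $R^1\pi_{*}\cO_Y(K_Y-L^{(k)})=0$. I would prove this along the lines of Lemma~\ref{lemma:acyclic}: let $\rho:Z\to Y$ be a smooth resolution of $Y$ and set $\sigma=\pi\circ\rho$; use the classical vanishing of $R^1\sigma_{*}$ on the corresponding floor-pullback sheaf from~\cite[\S1.1]{Blache95}; and extract the desired conclusion from the Grothendieck spectral sequence via the projection formula (Theorem~\ref{thm:projection}). Together with Leray's spectral sequence this yields $H^1(Y,\cO_Y(K_Y-L^{(k)}))\cong H^1(\PP^2_w,\mathcal{J}^{(k)})$. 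The short exact sequence
\[
0\longrightarrow\mathcal{J}^{(k)}\longrightarrow\cO_{\PP^2_w}(kH+K_{\PP^2_w}-\cC^{(k)})\longrightarrow\bigoplus_{P\in S}\cO_{\PP^2_w,P}/\cM^{(k)}_{\cC,P}\longrightarrow 0
\]
then delivers $\dim\coker\pi^{(k)}=\dim H^1(\PP^2_w,\mathcal{J}^{(k)})$ provided that $H^1(\PP^2_w,\cO_{\PP^2_w}(kH+K_{\PP^2_w}-\cC^{(k)}))=0$.

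The main obstacle I anticipate is verifying this final vanishing on the weighted projective plane. For integral Weil divisors the vanishing $H^1(\PP^2_w,\cO_{\PP^2_w}(n))=0$ is classical, obtainable either via a direct $w$-graded computation or by pulling back along $\phi:\PP^2\to\PP^2_w$ in~\eqref{covering} and decomposing $\phi_{*}\cO_{\PP^2}$ into its isotypic components under $\qa_w$; the subtlety is that $kH+K_{\PP^2_w}-\cC^{(k)}$ is only a $\QQ$-Weil divisor, so by the convention~\eqref{eq:OD} the vanishing is in fact applied to the integral divisor $\lfloor kH+K_{\PP^2_w}-\cC^{(k)}\rfloor$ of $w$-degree $s_k-|w|$, and one must check the argument accommodates this shift. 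The reduced-curve formula is then recovered as the specialization $\cC^{(k)}=0$, $s_k=k$, $\sum_j\fpart{kn_j/d}m_{\v j}=km_\v/d$.
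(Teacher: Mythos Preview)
Your strategy is sound and genuinely different from the paper's. The paper never pushes $\cO_Y(K_Y-L^{(k)})$ forward or invokes a short exact sequence; instead it combines Riemann--Roch on $Y$ and on $\PP^2_w$ (Theorem~\ref{thm:RR}) with the identification $\ker\pi^{(k)}\cong H^2(Y,\cO_Y(L^{(k)}))^{*}$ of Theorem~\ref{thm:h2Lk} to obtain
\[
\dim H^1(Y,\cO_Y(L^{(k)}))=\dim\coker\pi^{(k)}-\sum_{P\in S}\beta^{(k)}_{\cC,P},
\]
where each $\beta^{(k)}_{\cC,P}$ is assembled from intersection numbers on the exceptional configuration and the correction terms $R_{X,P}$, $R_{Y,Q}$, and depends only on the germ $(\cC,P)$, on $k/d$, and on a residue class of $k$. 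The vanishing $\beta^{(k)}_{\cC,P}=0$ is then forced by a global realization trick (Lemma~\ref{lemma:global-realization}): embed the germ as the unique non-nodal singularity of a curve $\cD\subset\PP^2_w$ of very large degree; Nori's theorem makes the associated cover have abelian fundamental group, so $H^1=0$, while $\pi^{(k')}$ is surjective by degree, whence $\beta=0$. Your route avoids Nori and the $R_{X,P}$ bookkeeping and sits squarely in the multiplier-ideal paradigm; the paper's route, conversely, avoids any relative vanishing theorem and yields the purely local identity $\beta^{(k)}_{\cC,P}=0$ as a dividend, which is reused in \S\ref{sec:examples} to prove resolution-independence of~$\cM_\pi(C^\lambda,k)$.

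The one step that does not go through as you wrote it is the vanishing $R^1\pi_{*}\cO_Y(K_Y-L^{(k)})=0$. Running the argument of Lemma~\ref{lemma:acyclic} through a smooth resolution $\rho:Z\to Y$, Blache's $R^1\sigma_{*}\cO_Z(\floor{\sigma^{*}D})=0$, and Grothendieck's spectral sequence only outputs $R^1\pi_{*}\cO_Y(\floor{\pi^{*}D})=0$; but $K_Y-L^{(k)}$ is \emph{not} of the form $\floor{\pi^{*}D}$ for any $D$ on $\PP^2_w$, because of the $K_\pi$ contribution. What is true, and what you actually need, is that
\[
-L^{(k)}=\ceil{\,k\,\pi^{*}H-\tfrac{k}{d}\,\pi^{*}\cC\,},
\]
so that $K_Y-L^{(k)}=K_Y+\ceil{N}$ with $N:=k\,\pi^{*}H-\tfrac{k}{d}\,\pi^{*}\cC$ a $\pi$-numerically trivial $\QQ$-divisor whose round-up defect $\ceil{N}-N$ is supported on the $\QQ$-normal-crossing divisor $\pi^{-1}(\cC)$ with coefficients in $[0,1)$. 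The required vanishing then follows from relative Kawamata--Viehweg (equivalently, local vanishing for multiplier ideals), which one reduces to the classical smooth statement by passing to a log resolution of $Y$; it does not follow from Lemma~\ref{lemma:acyclic} alone. With this fix your Leray-plus-short-exact-sequence argument goes through, the remaining input $H^1(\PP^2_w,\cO_{\PP^2_w}(s_k-|w|))=0$ being the standard vanishing on weighted projective space.
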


\begin{proof}
First note that by Theorem~\ref{thm:h2Lk} the kernel of $\pi^{(k)}$ is isomorphic to the dual of $H^2(Y,\cO_Y(L^{(k)}))$.
Rewriting $\dim\ker\pi^{(k)}$ in terms of $\dim\coker\pi^{(k)}$ and using the Euler characteristic
$\chi(Y,\cO_Y(L^{(k)})) = \sum_{q=0}^2 (-1)^q \dim H^q(Y,\cO_Y(L^{(k)}))$, one obtains
$$
\dim H^1(Y,\cO_Y(L^{(k)})) = \dim\coker\pi^{(k)} + A,
$$
where
\begin{equation}\label{eq:A}
\begin{aligned}
A &= \dim H^0\left(\PP^2_w,\mathcal{O}_{\PP^2_w}\left( kH+K_{\PP^2_w} - \mathcal{C}^{(k)}\right)\right)
- \sum_{P \in S} \dim \frac{\mathcal{O}_{\PP^2_w,P}\left( kH+K_{\PP^2_w} - \mathcal{C}^{(k)}\right)}{\mathcal{M}_{\mathcal{C},P}^{(k)}} \\
& \quad - \chi(Y,\cO_Y(L^{(k)})) + \dim H^0(Y,\cO_Y(L^{(k)})).
\end{aligned}
\end{equation}

For $k=0$, $\dim H^0(Y,\cO_Y(L^{(k)})) = \chi (Y,\cO_Y(L^{(k)})) = 1$ and the rest of the terms appearing in $A$
are zero. Then $A = 0$ and the claim follows.

Assume from now on that $k\neq 0$, then $\dim H^0(Y,\cO_Y(L^{(k)})) = 0$.
The first part of the proof is to rewrite $A$ in terms of local properties of the curve $\mathcal{C}$.
To compute the zero cohomologies in~\eqref{eq:A} we use the Riemann-Roch formula on normal surfaces (see~Theorem~\ref{thm:RR}).

Recall from~\eqref{eq:degree-sk} that $s_k - |w|$ is the degree of $kH+K_{\PP^2_w} - \mathcal{C}^{(k)}$.
Then since $s_k>0$, one obtains
\begin{equation}\label{eq:H0skw}
\begin{aligned}
h^0(\PP^2_w, & \mathcal{O}_{\PP^2_w}(s_k-|w|))\\
&=h^0\Big(\PP^2_w, \mathcal{O}_{\PP^2_w}\left( kH+K_{\PP^2_w} - \mathcal{C}^{(k)} \right)\Big) =
\chi \left( \PP^2_w,\mathcal{O}_{\PP^2_w} \left( kH+K_{\PP^2_w} - \mathcal{C}^{(k)} \right) \right) \\
& =1 + \frac{1}{2} \left( kH+K_{\PP^2_w} - \mathcal{C}^{(k)} \right)
\cdot \left( kH - \mathcal{C}^{(k)} \right)
+ R_{\PP^2_w} \left( kH+K_{\PP^2_w} - \mathcal{C}^{(k)} \right).
\end{aligned}
\end{equation}
After applying Serre's duality to $R_{\PP^2_w}$ the last term can be replaced:
\begin{equation}\label{eq:RXlocal}
R_{\PP^2_w} \left( kH+K_{\PP^2_w} - \mathcal{C}^{(k)} \right)=
R_{\PP^2_w} \left( -kH + \mathcal{C}^{(k)} \right) =
\sum_{P \in S} R_{{\PP^2_w},P} \left( -kH + \mathcal{C}^{(k)} \right).
\end{equation}

Now we are interested in studying the Euler characteristic of $\cO_Y(L^{(k)})$. From the proof
of Theorem~\ref{thm:h2Lk}, see~\eqref{eq:Lk}, it follows that
$$
L^{(k)} = \pi^{*} \left( -kH + \mathcal{C}^{(k)} \right)
+ \sum_{P \in S} \sum_{\v\in\Gamma_P} \left( \floor{\frac{k(m_\v-d\b_\v)}{d}} + k \b_\v
- e_{\v k} \right) E_\v.
$$
Let us denote by $\alpha_{\cC,P}^{(k)}$ the rational number
\begin{small}
\begin{equation}\label{eq:def-alpha}
\begin{aligned}
& \alpha_{\cC,P}^{(k)} =\\
&\quad \frac{1}{2} \sum_{\v,\vv \in \Gamma_P}
\left( \floor{\frac{k(m_\v-d\b_\v)}{d}} + k \b_\v - e_{\v k} \right)
\left( \floor{\frac{k(m_\vv-d\b_\vv)}{d}} + k \b_\vv - e_{\vv k} - (\nu_\vv - 1) \right)
E_\v \cdot E_\vv.
\end{aligned}
\end{equation}
\end{small}
Applying the Riemann-Roch formula (Theorem~\ref{thm:RR}) to the $V$-surface $Y$ one has
\begin{equation}\label{eq:chiLk}
\begin{aligned}
& \chi(Y,\cO_Y(L^{(k)})) = 1 + \frac{L^{(k)} \cdot (L^{(k)}-K_Y)}{2} + R_Y(L^{(k)}) \\
&= 1 + \frac{1}{2} \left( -kH + \mathcal{C}^{(k)} \right)
\cdot \left( -kH + \mathcal{C}^{(k)} - K_{\PP^2_w} \right)
+ \sum_{P \in S} \alpha_{\cC,P}^{(k)} + R_Y(L^{(k)}).
\end{aligned}
\end{equation}
Rearranging the local contributions to the correction term $R_Y(L^{(k)})$ one obtains
\begin{equation}\label{eq:RYlocal}
R_Y(L^{(k)}) = \sum_{Q \in Y} R_{Y,Q} (L^{(k)}) = \sum_{P \in S} \sum_{Q \in \pi^{-1}(P)} R_{Y,Q}(L^{(k)}).
\end{equation}

Putting all the previous calculations~\eqref{eq:A}, \eqref{eq:H0skw}, \eqref{eq:RXlocal},
\eqref{eq:chiLk}, \eqref{eq:RYlocal} together, one obtains $A = - \sum_{P \in S} \beta_{\cC,P}^{(k)}$
where
\begin{equation}\label{eq:beta}
\beta_{\cC,P}^{(k)}\! =\! \alpha_{\cC,P}^{(k)}
+ \dim\! \frac{\mathcal{O}_{\PP^2_w,P}\left( kH\! +\! K_{\PP^2_w}\! -\! \mathcal{C}^{(k)} \right)}{\mathcal{M}_{\mathcal{C},P}^{(k)}}
+ \!\!\!\!\sum_{Q \in \pi^{-1}(P)}\!\!\!\!\!\! R_{Y,Q}( L^{(k)} )
- R_{{\PP^2_w},P}\! \left(\! -kH \!+ \!\mathcal{C}^{(k)}\! \right)\!,
\end{equation}
and, finally,
\begin{equation}
\label{eq:H1-coker-beta}
\dim H^1(Y,\cO_Y(L^{(k)})) = \dim\coker\pi^{(k)} - \sum_{P \in S} \beta_{\cC,P}^{(k)}.
\end{equation}

The second part of the proof consists in showing that $\beta_{\cC,P}^{(k)}=0$ for any $P\in S$.
Without loss of generality one can assume $P=[0:0:1]$. The proof is analogous for the other singular points of $\PP^2_w$.
For the remaining points in $S$ the same proof works changing $w_2$ by 1.
\begin{step}\label{step1} 
 Note that $\beta_{\cC,P}^{(k)}$ only depends on the topological type of 
 $(\mathcal{C},P) \subset (\PP^2_w,P) = \frac{1}{w_2}(w_0,w_1)$, $\frac{k}{d}\in\QQ$, and $k$ (resp. $d_1,\ldots,d_r,|w|$) 
 modulo $w_2$.
\end{step}
 The result is trivial for the last summand of~\eqref{eq:beta}.  
 The key point for the first two summands is to show that the term $\floor{\frac{k m_\v}{d}-k\b_\v}+k\b_\v$ only depends on 
 $\frac{k}{d}$, $m_\v$, $\b_\v$ (which depend on the local topological type both of the surface and the curve), 
 and $k$ modulo $w_2$ as opposed to $k$. The latter is a consequence of the fact that $\b_\v\in \frac{1}{w_2}\ZZ$.
 Finally, for the third summand in~\eqref{eq:beta} one needs to use~\eqref{eq:Lk}.
 
\begin{step} For any given $P\in S$ we can apply Lemma~\ref{lemma:global-realization} to $(\cC,P)$ and obtain a global 
generic curve $\cD\subset \PP^2_{w}$ such that $(\cD,P)=(\cC,P)$. 
\end{step} 
\begin{step} 
 One applies~\eqref{eq:H1-coker-beta} to the new curve $\cD$. Note that the only singularity contributing to the 
 right-hand side is $P\in \cD$ and $(\cD,P)$ has the same topological type as $(\cC,P)$. 
 For a curve $\cD$ of big enough degree, the cokernel in~\eqref{eq:H1-coker-beta} is zero. In addition, 
 by~\cite[Theorem II, p.~306]{Nori-zariski} the left-hand side of~\eqref{eq:H1-coker-beta} is also zero and hence 
 $\beta_{\cD,P}^{(k')}=0$ for all $0\leq k'< \deg \cD$.
\end{step}
  
\begin{step}
Finally, combining properties~\ref{prop:4}, \ref{prop:5}, and \ref{prop:6} in Lemma~\ref{lemma:global-realization} 
together with Step~\ref{step1} above, it immediately follows that
$\beta_{\cC,P}^{(k)}=\beta_{\cD,P}^{(k')}$, for $k'=k\frac{\deg \cD}{\deg \cC}\in \ZZ$, since $k'\equiv k \mod (w_2)$,
$\deg \cD_j\equiv \deg \cC_j=d_j \mod (w_2)$ for all $j=1,...,r$, and $\frac{k}{\deg \cC}=\frac{k'}{\deg \cD}$.\qedhere
\end{step}
\end{proof}

\begin{ex}
As a first example we can study the $6$-fold cyclic cover $\tilde X$ of $\PP^2$ ramified along the non-reduced divisor 
$\cC=\cC_1+2\cC_2+3\cC_3$, where $\cC_1,\cC_2,\cC_3$ are any three concurrent lines. Note that 
$$
\cM(\cC^{\frac{k}{6}})=
\begin{cases}
\mathfrak{m}=(x,y) & \text{ for } k=5\\
\CC\{x,y\} & \text{ otherwise.}\\
\end{cases}
$$
By Theorem~\ref{thm:conucleo_singular} one needs to consider the morphism:
$$
\pi^{(k)}=
H^0(\PP^2,\cO_{\PP^2}(s_k-3)) \to \frac{\cO_{\CC^2,0}}{\cM(C^{\frac{k}{6}})}=
\begin{cases}
0 & \text{ if } k=0,...,4\\
\CC & \text{ if } k=5.\\
\end{cases}
$$
It is also straightforward to check that $s_5=2$. 
Hence $\coker \pi^{(5)}=\CC$ is the only non-trivial cokernel and thus $\dim H^1(\tilde X,\CC)=2$.
\end{ex}

\begin{lemma}[Global realization Lemma]
\label{lemma:global-realization}
Let $(\cC,P)$ be the topological type of a curve singularity in a cyclic quotient surface of normalized type 
$\frac{1}{w_2}(\bar w_0,\bar w_1)$. Then there exists a global curve $\cD\subset \PP^2_w$, $w=(w_0,w_1,w_2)$,
in a weighted projective plane such that:
\begin{enumerate}[label=\rm(\arabic*)]
 \item\label{prop:1} 
 $(\PP^2_w,O=[0:0:1])$ is a surface singularity of type $\frac{1}{w_2}(\bar w_0,\bar w_1)$,
 \item\label{prop:2} 
 $(\cC,P)$ and $(\cD,O)$ have the same topological type,
 \item\label{prop:3} One obtains
 $\cD\cap \sing \PP^2_w \subseteq\sing \cD=\{O\}$.
\end{enumerate}
Moreover, if $\cC=\cC_1\cup...\cup\cC_r$ is already a global curve in some weighted projective plane $\PP^2_w$ and 
$P\in \cC$ is as above with $(\PP,P)=\frac{1}{w_2}(\bar w_0,\bar w_1)$, then for any integer $N$ a curve 
$\cD=\cD_1\cup...\cup\cD_r$ can be found in $\PP^2_w$ satisfying the following 
extra conditions:
\begin{enumerate}[label=\rm(\arabic*)]
\setcounter{enumi}{3}
 \item\label{prop:3b}
 $\cD\cap \sing \PP^2_w \subseteq\{O\}$ and $\sing \cD=\{O\}\cup \cN$, 
 where $\cN$ is a set of nodal points of $\cD$ (at smooth points of $\PP^2_w$),
 \item\label{prop:4} 
 $\deg \cC$ divides $\deg_w \cD > N$,
 \item\label{prop:5} 
 $\deg \cD_j\equiv \deg \cC_j \mod (w_2)$, and
 \item\label{prop:6}
 $q \equiv 1 \mod (w_2)$, where $q:=\frac{\deg_w \cD}{\deg_w \cC}\in \ZZ_{> 0}$.
\end{enumerate}
\end{lemma}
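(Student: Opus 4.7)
The plan is to realize the prescribed local germ as the $N$-jet at $O = [0:0:1]$ of a suitable $w$-homogeneous polynomial, and then use Bertini genericity to control the remaining singularities. First, by finite determinacy of plane curve singularities, the topological type of the germ $(\cC,P)$, including its branching into components $(\cC_j,P)$, is determined by its $N$-jet for $N$ sufficiently large. Pick local quasi-polynomial representatives $f_j$ of each $(\cC_j,P)$ in the chart $U_2 \cong \frac{1}{w_2}(\bar{w}_0,\bar{w}_1)$ around $O$; their class in $\ZZ_{w_2}$ equals $d_j \bmod w_2$, where in the refined statement $d_j = \deg_w \cC_j$.

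Next, fix a large positive integer $s$ and set $q := 1 + s w_2$, so $q \equiv 1 \pmod{w_2}$. Homogenize $f_j^{(N)}$ to a $w$-homogeneous polynomial $F_j^{(0)}(X,Y,Z)$ of weighted degree $q d_j$ by multiplying each monomial $x^a y^b$ of $f_j^{(N)}$ by $Z^{(q d_j - a w_0 - b w_1)/w_2}$; the exponent is a non-negative integer because $q d_j \equiv d_j \equiv a w_0 + b w_1 \pmod{w_2}$ and $s$ is large. Let $\mathcal{L}_j$ be the affine linear system of all $w$-homogeneous polynomials of degree $q d_j$ whose $N$-jet at $O$ agrees with that of $F_j^{(0)}$. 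The number of jet conditions is bounded independently of $s$, while $\dim \mathcal{L}_j$ grows like $s^2$; hence for $s \gg 0$ the system $\mathcal{L}_j$ is base-point-free on $\PP^2_w \setminus \{O\}$ and separates tangent directions there. By Bertini applied to the $V$-manifold $\PP^2_w$, a generic $F_j \in \mathcal{L}_j$ cuts out a curve $\cD_j$ smooth off $O$, and by further genericity the $\cD_j$ meet pairwise transversely at points of $\reg(\PP^2_w) \setminus \{O\}$, producing the nodal set $\cN$, and avoid the remaining quotient singular points $[1:0:0]$ and $[0:1:0]$.

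Set $\cD := \bigcup_j \cD_j$. Properties (1) and (2) follow from the construction together with finite determinacy, (3b) follows from the Bertini step, and the degree identities follow at once from $\deg_w \cD = \sum_j q d_j = q d$, yielding (4), (5), (6) as well as $\deg_w \cD > N$ for $s$ large. For the plain existence statement (properties (1)--(3) with $\sing \cD = \{O\}$), the same construction applied to a single polynomial representing the full germ $(\cC,P)$ gives $\cD$ with $\sing \cD = \{O\}$, since then no auxiliary intersection nodes appear. The hard part is the Bertini step: the heuristic dimension count $\dim \mathcal{L}_j \sim s^2$ against the $O(N^2)$ jet conditions must be promoted to a rigorous base-point-freeness and tangent-separation statement on $\PP^2_w \setminus \{O\}$, and then combined with the weighted-projective analogue of the standard Bertini--Kleiman machinery to conclude generic smoothness off $O$.
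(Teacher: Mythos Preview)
Your overall architecture---finite determinacy, homogenization in~$Z$, and a Bertini-type genericity argument---is essentially the same as the paper's, which simply adds generic high-degree quasi-invariant monomials to a polynomial representative and then homogenizes. So the strategy is fine; the issue is a concrete obstruction you have overlooked in the degree choice.

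The gap is your claim that ``by further genericity the $\cD_j$ \ldots\ avoid the remaining quotient singular points $[1:0:0]$ and $[0:1:0]$'', equivalently that $\mathcal{L}_j$ is base-point-free on $\PP^2_w\setminus\{O\}$. This is false for an arbitrary $q=1+sw_2$. Indeed, a $w$-homogeneous polynomial of degree $M$ can be nonzero at $[1:0:0]$ only if the monomial $X^{M/w_0}$ exists, i.e.\ only if $w_0\mid M$. Thus whenever $w_0>1$ and $w_0\nmid q d_j$, \emph{every} curve in $\mathcal{L}_j$ passes through $[1:0:0]$, so $[1:0:0]$ is a base point and no Bertini argument can remove it; the same happens at $[0:1:0]$ with~$w_1$. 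Consequently property~(3b) (and already property~(3) in the first part if $w_0,w_1>1$) cannot be achieved by genericity alone.

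The fix, which is exactly what the paper does, is to impose a congruence on $s$: choose $s\equiv -w_2^{-1}\pmod{w_0w_1}$, so that $q=1+sw_2\equiv 0\pmod{w_0w_1}$ and hence $w_0w_1\mid q d_j$ for every~$j$. Then the pure monomials $X^{qd_j/w_0}$ and $Y^{qd_j/w_1}$ are available, $[1:0:0]$ and $[0:1:0]$ are no longer base points, and your Bertini step goes through. Since infinitely many such $s$ exist, all the remaining size constraints (in particular $\deg_w\cD>N$) are still satisfiable.
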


\begin{proof}
Similarly as in the case of germs on a smooth surface, there is a polynomial representative $f(x,y)$ of $(\cC,P)$. 
In this case $f(x,y)=\sum_{(i,j)\in \Gamma(f)}a_{i,j}x^iy^j$ where 
\begin{equation}
\label{eq:ij}
\bar w_0 i + \bar w_1 j \equiv w_0 i + w_1 j\equiv \lambda \mod (w_2) 
\end{equation}
$(i,j)\in \Gamma(f)$ and the set $\Gamma(f)=\{(i,j)\in \ZZ^2\mid a_{i,j}\neq 0\}$ is finite and $\lambda$ is a fixed 
integer $0\leq \lambda<w_2$. Adding monomials of high enough $(w_0,w_1)$-degree satisfying~\eqref{eq:ij}, one can assure 
that the topological type of $\{g=f+F=0\}$ at $P=(0,0)$ coincides with that of $(\cC,P)$. The next step will be to 
homogenize $g$ with respect to $w=(w_0,w_1,w_2)$. Denote by $M:=\max\{w_0 i+w_1 j \mid (i,j)\in \Gamma(g)\}$.
Given any $(i,j)\in \Gamma(g)$ note that $M-(w_0 i+w_1 j) = d_{i,j}w_2\geq 0$. Hence, 
$$
G(X,Y,Z)=\sum_{(i,j)\in \Gamma(g)} a_{i,j} X^iY^jZ^{d_{i,j}}
$$
is a $w$-homogeneous polynomial of degree $M$.
For a generic choice of monomials in $F$, the global curve $\cD=\{G=0\}$ satisfies the required conditions
\ref{prop:1}-\ref{prop:3}.

For the second part, we proceed as before for each irreducible component $\cC_j$ obtaining $\cD=\cD_1\cup ...\cup\cD_r$.
By the generic choice of monomials for each irreducible component, condition~\ref{prop:3b} is immediately satisfied and it will 
avoid the other singular points of $\PP^2_w$ if $M_j=\deg_w \cD_j$ is a multiple of $w_0w_1$.
Also, note that $M_j=\deg_w \cD_j$ from the construction above and $M=\sum_j n_jM_j=\deg_w \cD$. Consider $\Delta\in \ZZ_{>0}$
such that $\Delta\equiv -w_2^{-1} \mod (w_0w_1)$ and define 
\begin{equation}
\label{eq:Mj}
M_j:=(\Delta w_2+1)\deg_w \cC_j,\quad \text{ for all } j=1,..,r,
\end{equation}
then $M_j$ satisfies~\ref{prop:5}. By construction 
$M=(\Delta w_2+1)\deg_w \cC $ which proves~\ref{prop:4} and $\frac{M}{\deg_w \cC}=\Delta w_2+1$ satisfies~\ref{prop:6}.
Since $\Delta$ can be chosen big enough, then the result follows.
\end{proof}

\section{Examples and applications}
\label{sec:examples}

In this section some applications of the global theory (Theorem~\ref{thm:conucleo_singular}) are provided. Some of them 
are theoretical, such as the independence of $\cM_\pi(C^\lambda,k)$ on the $\Q$-resolution, others are 
concrete examples of calculations. In particular, we provide examples that exhibit new features in our approach.
Namely, only a $\Q$-resolution is required to obtain the birational information on the ramified covers;
also the special relevance of the singular points of the surface is shown in a particular case; finally,
a Zariski pair in a weighted projective plane is described (see~\cite{ACM-Nemethi60} for more examples). 

\subsection{Independence of the \texorpdfstring{$\Q$}{Q}-resolution: global to local}
\mbox{}

As announced in section~\ref{sec:local} we can show that 
$\cM_\pi(C^\lambda,k)$ is independent of the chosen $\Q$-resolution. 
This is an interesting application where the global theory is used to prove a local result.

\begin{prop}
\label{prop:indres}
Let $\pi_i:Y_i\to X$, $i=1,2$ be two good $\Q$-resolutions of $C\subset (X,0)$, then
$$\cM_{\pi_1}(C^\lambda,k)= \cM_{\pi_2}(C^\lambda,k).$$
\end{prop}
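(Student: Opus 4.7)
The plan is to use the global Theorem~\ref{thm:conucleo_singular} to deduce the local independence, illustrating the local--global interplay emphasized in the introduction. Given two $\Q$-resolutions $\pi_1,\pi_2$ of $C\subset(X,0)$, passing to a common refinement (which exists as a composition of further weighted blow-ups) and proceeding by induction, I first reduce to the case $\pi_2=\sigma\circ\pi_1$, where $\sigma\colon Y_2\to Y_1$ is a single weighted blow-up at a point $Q\in Y_1$. Each exceptional divisor $E_\v$ with $\v\in\Gamma_{\pi_1}$ has a strict transform $\hat E_\v\subset Y_2$ belonging to $\Gamma_{\pi_2}$; a direct computation using Proposition~\ref{formula_self-intersection} shows $\mult_{\hat E_\v}\pi_2^{*}g=\mult_{E_\v}\pi_1^{*}g$, $m_{\hat E_\v,j}=m_{\v j}$ and $\nu_{\hat E_\v}=\nu_\v$. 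Hence the defining inequalities of $\cM_{\pi_2}$ at these divisors coincide with those of $\cM_{\pi_1}$, yielding the inclusion $\cM_{\pi_2}(C^\lambda,k)\subseteq\cM_{\pi_1}(C^\lambda,k)$.

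Since one inclusion is now in hand, to obtain equality it suffices to show that both modules have the same $\CC$-codimension inside the ambient invariant module $\cO_{X,\zeta^{s_{\lambda,k}}}$. By Lemma~\ref{lemma:propsM}\ref{lemma:propsM:epsilon}--\ref{lemma:propsM:plus1} one may reduce to the case $\lambda=k/d$ with $d\in\ZZ_{>0}$ and $0\leq k<d$. I now invoke Lemma~\ref{lemma:global-realization} to realize the germ $(C,0)$ as $(\cD,O)$, where $\cD=\bigcup n_j\cD_j\subset\PP^2_w$ is a global curve whose $w$-degree is divisible by $d$, whose only non-nodal singularity is at $O$ (with $(\PP^2_w,O)$ of the same analytic type as $(X,0)$), and whose remaining singularities are ordinary nodes at smooth points of $\PP^2_w$. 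Both $\pi_1$ and $\pi_2$ extend to $\Q$-resolutions $\tilde\pi_1,\tilde\pi_2$ of $\cD\subset\PP^2_w$ that may be chosen to agree over every point of $S\setminus\{O\}$.

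Applying Theorem~\ref{thm:conucleo_singular} to each $\tilde\pi_i$ and using Theorem~\ref{thm:h2Lk} to identify $\ker\pi^{(k)}$ with $H^2(Y,\cO(L^{(k)}))^{*}$, rank--nullity expresses
\[
\begin{aligned}
\dim\bigl(\cO_{X,O}/\cM_{\pi_i}^{(k)}\bigr) &= \dim H^1\bigl(Y_{\tilde\pi_i},\cO(L^{(k)})\bigr)-\dim H^2\bigl(Y_{\tilde\pi_i},\cO(L^{(k)})\bigr) \\
&\quad + \dim H^0\bigl(\PP^2_w,\cO(kH+K-\cD^{(k)})\bigr) - \sum_{P\in S\setminus\{O\}}\dim\tfrac{\cO_{\PP^2_w,P}}{\cM^{(k)}_{\cD,P}}.
\end{aligned}
\]
The main obstacle is to recognize that the two cohomology dimensions on the right-hand side depend only on the cover $\tilde X_{\cD}\to\PP^2_w$, not on the particular $\Q$-resolution used to compute them; this is exactly the comparison argument at the heart of the proof of Theorem~\ref{thm:Esnault}, which uses Proposition~\ref{prop:comparison_cohomology} to show that after a weighted blow-up one has either $\lfloor\pi^{*}L^{(k)}\rfloor=L'^{(k)}$ or $\lceil\pi^{*}L^{(k)}+K_\pi\rceil=L'^{(k)}$, so that $H^q(Y,\cO(L^{(k)}))$ is preserved. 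The remaining summands on the right are manifestly the same for $\tilde\pi_1$ and $\tilde\pi_2$ by construction. Therefore $\dim(\cO_{X,O}/\cM_{\pi_1}^{(k)})=\dim(\cO_{X,O}/\cM_{\pi_2}^{(k)})$, and combined with the inclusion established above this forces $\cM_{\pi_1}(C^\lambda,k)=\cM_{\pi_2}(C^\lambda,k)$, as desired.
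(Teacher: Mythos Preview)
Your proof follows the same global-to-local strategy as the paper: globalize the germ via Lemma~\ref{lemma:global-realization}, apply Theorem~\ref{thm:conucleo_singular}, and combine birational invariance of the cover's cohomology with the easy inclusion $\cM_{\pi_2}\subseteq\cM_{\pi_1}$ for a dominating resolution. The only substantive variation is that the paper arranges $\deg_w\cD$ large enough to make $\pi^{(a)}$ \emph{surjective}, so that only the birational invariance of $\ker\pi^{(a)}\cong H^2(Y,\cO_Y(L^{(a)}))^{*}$ is needed; you instead keep both $H^1$ and $H^2$ in the rank--nullity balance and appeal to Theorem~\ref{thm:Esnault} for each. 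This is a cosmetic difference.

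One step deserves more care. Your reduction ``$\lambda=k/d$'' conflates the local parameter $k\in\ZZ/w_2\ZZ$ with the eigenvalue index of the global cover: for a given pair $(\lambda,k)$ one cannot in general write $\lambda=k/d$ with the \emph{same} $k$, and Lemma~\ref{lemma:propsM} alone does not furnish this. What is actually needed (and what the paper spells out in its final claim) is to choose $\deg_w\cD$ and an index $a\equiv k\pmod{w_2}$ with $\lambda':=a/\deg_w\cD\in[\lambda,\lambda+\varepsilon)$, so that Lemma~\ref{lemma:propsM}\ref{lemma:propsM:epsilon} gives $\cM_{\pi}(C^{\lambda},k)=\cM_{\pi}(C^{\lambda'},k)=\cM_{\pi}(\cD^{\lambda'},a)=\cM^{(a)}_{\cD,O}$. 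Without this matching, the global module $\cM^{(k)}_{\cD,O}$ appearing in your displayed identity need not coincide with the local module you are trying to compute.
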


\begin{proof}
By Lemma~\ref{lemma:propsM}\ref{lemma:propsM:plus1} it is enough to show the result for all $0\leq \lambda <1$.
Let us decompose $C=\sum n_iC_i$ where $C_i$ are the irreducible components of $C$.
The surface can be written as $X=\frac{1}{w_2}(w_0,w_1)$ a cyclic singularity in a normal form, that is, $\gcd(w_i,w_j)=1$ for $i\neq j$. 
Consider $0\leq \lambda<1$ a rational number and $0\leq k<w_2$. By Lemma~\ref{lemma:propsM}\ref{lemma:propsM:epsilon}, for any given good 
resolution $\pi:Y\to X$ of $(X,C)$ there is a $\varepsilon > 0$ such that $\cM_\pi(C^{\lambda},k)=\cM_\pi(C^{\lambda+\varepsilon},k)$. 
We claim one can find a projective curve $\cD\subset \PP^2_w$, $w=(w_0,w_1,w_2)$ as in~\eqref{eq:Mj} in the proof of 
Lemma~\ref{lemma:global-realization}, where
$\deg_w\cD=c(1+\Delta w_2)$ for $c>0$, $c\equiv [C] \mod (w_2)$ and a big enough 
$\Delta\gg 0$ and $a\equiv k \mod (w_2)$ such that 
$\lambda':=\frac{a}{\deg_w\cD}\in [\lambda,\lambda+\varepsilon)$.
Since $\cD$ is normal crossing outside of $P$ by property~\ref{prop:3b} 
in Lemma~\ref{lemma:global-realization} the morphism $\pi$ is also a resolution of $\cD$. Hence
$$
\cM_\pi(C^\lambda,k)=\cM_\pi(C^{\lambda'},k)=\cM_\pi(\cD^{\lambda'},k)=\cM_\pi(\cD^{\lambda'},a).
$$
The map $\pi^{(a)}$ in Theorem~\ref{thm:conucleo_singular} is as simple as
\[
\pi^{(a)}: H^0\left(\PP^2_w,\mathcal{O}_{\PP^2_w}\left( aH+K_{\PP^2_w} - \mathcal{D}^{(a)}\right) \right) 
\longrightarrow 
\frac{\mathcal{O}_{\PP^2_w,P}\left( aH+K_{\PP^2_w} - \mathcal{D}^{(a)}\right)}{\cM_\pi(\cD^{\lambda'},a)}.
\]
Since $\deg_w\cD$ can be chosen big enough, one can assume $\pi^{(a)}$ is surjective. Since 
$\ker \pi^{(a)}=H^2(Y,\cO_Y(L^{(a)}))$ is a birational invariant of the associated covering of $\PP^2_w$ ramified along $\cD$, 
one obtains
$$\dim_\CC \frac{\mathcal{O}_{\PP^2_w,P}\left( aH+K_{\PP^2_w} - \mathcal{D}^{(a)}\right)}{\cM_\pi(\cD^{\lambda'},a)}
=\dim_\CC \frac{\mathcal{O}_{\PP^2_w,P}\left( kH+K_{\PP^2_w} - \mathcal{D}^{(k)}\right)}{\cM_\pi(\cD^{\lambda'},k)}$$
is independent of the resolution.

Finally, the result follows since any two resolutions are simultaneously dominated by a third one 
and if $\pi$ dominates $\pi'$, then $\cM_{\pi}(C^\lambda,k)\subset \cM_{\pi'}(C^\lambda,k)$.

As for the claim, note that the proof of Lemma~\ref{lemma:global-realization} can be redone from the local situation 
using any list of positive $c_i\equiv [C_i] \mod (w_2)$ and $\deg_wD_i=c_i(1+\Delta w_2)$. Hence 
$\deg_wD=c:=\sum_i n_ic_i\equiv [C] \mod (w_2)$. Take any $\Delta\gg 0$ such that $\Delta\equiv -w_2^{-1} \mod (w_0w_1)$
and $\frac{w_2}{c(1+\Delta w_2)}<\varepsilon$, denote $N:=c(1+\Delta w_2)>0$. Take $n_1\in \ZZ_{>0}$ such that
$\frac{n_1}{N}\in [\lambda,\lambda+\varepsilon)$. Note that $\frac{n_1+j}{N}\in [\lambda,\lambda+\varepsilon)$ for 
all $j=0,...,w_2-1$, hence there is exactly one such $j$ for which $a:=n_1+j\equiv k \mod (w_2)$. This proves the claim.
\end{proof}

\subsection{Partial versus total resolutions}
\mbox{}

This example illustrates the advantage of using $\Q$-resolutions. In addition, this can be used to obtain information
about local invariants of the embedding of a curve in a cyclic quotient singularity.

\begin{ex}
Let $\mathcal{C}$ be the curve in $X:=\PP^2_{(9,5,2)}$ defined by $g = x^2 z + y^2 z^5 + y^4$.
Consider the cyclic branched covering $\rho: \tilde{X} \to X$ of degree $d = 20$ ramifying on $\mathcal{C}$.
This curve passes through the point $[1:0:0]$ which is of type $\frac{1}{9}(5,2)$ and it is quasi-smooth,
since it is locally isomorphic to $z=0$. The curve is singular at $[0:0:1] \in \frac{1}{2}(1,1)$ and it
defines a \emph{quasi-node}, i.e.~it is locally isomorphic to $xy=0$. The rest of the points are smooth and
$[0:1:0] \not \in \mathcal{C}$. Therefore the curve itself is already a $\mathbb{Q}$-divisor with (non-simple!) 
$\Q$-normal crossings and Theorem~\ref{thm:conucleo_singular} tells us that $H^1(\tilde{X},\CC)=0$ because the 
map $\pi^{(k)}$ is identically zero. Note that showing this fact is not obvious if one uses a standard good resolution.

We can use this fact to obtain a formula for the correction term map 
$$R_{X,P}: \Weil(X,P)/\Cart(X,P) \longrightarrow \QQ,$$
--see section~\ref{sec:RR}.
We resolve the singularity of $X$ at the point $P=[0:1:0] \in \frac{1}{5}(1,3)$ so that 
$\sum_{Q \in \pi^{-1}(P)} R_{Y,Q}( L^{(k)} ) = 0$.
Since the curve does not pass through this point, the second term in~\eqref{eq:beta} is zero and
the formula $\beta_{\mathcal{C},P}^{(k)} = 0$ becomes
$$
R_{X,P}(-kH) = \alpha_{\mathcal{C},P}^{(k)}
:= \frac{1}{2} \sum_{i,j=1}^2 \fpart{-k \b_i} \left( \fpart{-k \b_i} + \nu_i -1 \right) E_i \cdot E_j.
$$
Following the Hirzebruch-Jung method, one obtains two exceptional divisors $E_1$ and $E_2$ and the following
numerical data:
$$
A = \begin{pmatrix} -2 & 1 \\ 1 & -3 \end{pmatrix},
\quad \nu_1 = \dfrac{4}{5}, \quad \nu_2 = \dfrac{3}{5},
\quad \b_1 = -\dfrac{11}{5}, \quad \b_2 = -\dfrac{2}{5},
$$
where $A$ is the intersection matrix of the resolution and $H=\ddiv_X(x)-4\ddiv_X(z)$ was chosen for the computation
of $\b_1$ and $\b_2$. This completely describes the map $R_{X,P}$,
since $H$ generates the group $\Weil(X,P)/\Cart(X,P)$ --\,for instance $R_{X,P}(-3H) = -1/5$,
cf.~\cite[p.~312]{Blache95}.
\end{ex}

\subsection{Smooth versus singular surfaces}
\mbox{}

The next two examples illustrate the relevance of the codimension 2 singular locus of a surface when it comes to 
computing birational invariants of coverings.

\begin{ex}
\label{ex:4A2}
Let us consider the curve $\cC\subset S=\PP^2_{(1,2,1)}$ with equation
\[
F(x,y,z):=-4 x^{3} z^{3} - 27 z^{6} - 18 x y z^{3} + x^{2} y^{2} + 4 y^{3}=0.
\] 
This curve of degree~$6$ has 4 points of type $\mathbb{A}_2$ and does not pass
through the singular point~$P$ of the plane.
The only possible $k$ for which $H^1(Y,\cO_Y(L^{(k)}))$ may not vanish is $k=5$ where
the ideal is the maximal one.
We have 
\begin{equation*}
\pi^{(5)}: H^0(S,\mathcal{O}_{S}(5-4)) \longrightarrow\CC^4
\end{equation*}
and each point is in a different line passing through~$P$. Hence $\ker\pi^{(5)}=0$
and 
\[
\dim\coker\pi^{(5)}=\dim H^1(Y,\cO_Y(L^{(k)}))=2.
\]
\end{ex}

\begin{ex}
\label{ex:cusp23}
Let $\mathcal{C}$ be the curve of degree $d=6$ in $\PP^2_{w}$, $w=(3,2,1)$ given by $G = x^2 + y^3$.
Consider the cyclic branched covering $\rho: \tilde{X} \to \PP^2_w$ ramifying on $\mathcal{C}$.
This curve has only one singular point at $P = [0:0:1]$ and does not pass through the singular
points of~$\PP^2_{w}$. Resolving the singularity of the curve with a $(3,2)$-blow up gives rise
to one exceptional divisor $E$ with self-intersection number $-1/6$ and
$\pi^* \mathcal{C} = \hat{\mathcal{C}} + 6 E$, $K_{\pi} = 4E$.
Since $H^0(\PP^2_w, \mathcal{O}_{\PP^2_{w}}(k-6)) = 0$ for $k = 0,\dots,5$, the cokernel of $\pi^{(k)}$ is
$$
\coker \pi^{(k)} = \frac{\mathcal{O}_{\CC^2,0}}{\{ g \mid \mult_{E} \pi^{*} h > k - 5 \}},
$$
whose dimension is $1$ for $k=5$ and $0$ for $k=0,\ldots,4$. According to
Theorem~\ref{thm:conucleo_singular}, $\dim H^1(\tilde{X},\CC) = 2$ and the characteristic
polynomial of the monodromy of the covering acting on $H^1(\tilde{X},\CC)$ is
$(t-e^{\frac{2\pi i}{6}})(t-e^{\frac{2\pi i 5}{6}}) = t^2 - t + 1$.

Note that the $6$th-cyclic cover of $\PP^2$ ramified along any curve of degree $6$ having just a singular point
with the topological type of $x^2+y^3$ does not have any irregularity. Therefore the singular points of the 
ambient space might affect the irregularity of the covering, even though they have codimension~$2$.
\end{ex}

\subsection{A Zariski pair of curves on the weighted projective plane}\label{sec:zariski-pair}
\mbox{}

Consider the curve $\mathcal{C}_{\lambda} \subset S=\PP^2_{(1,1,3)}$ defined by the polynomial
$$
G_{\lambda}(x,y,z) = (\lambda y z + x z - x^{a} y^{b})^3 + (y z - x z + x^{a} y^{b})^3
+ (- \lambda y z + \lambda x z + x^{a} y^{b})^3
$$
with $\lambda\in \CC$ and $a+b=4$ so that it is quasi-homogeneous. As above, let $\rho_\lambda: \tilde{X}_\lambda \to S$
be the cyclic branched covering of degree $d=12$ ramifying on $\mathcal{C}_\lambda$.
Using $H^1(\tilde{X}_\lambda,\CC)$, which is
an invariant of the pair $(S,\mathcal{C}_\lambda)$, we will see that
$(S,\mathcal{C}_1)$ and $(S,\mathcal{C}_{\zeta})$ for $\zeta$ a $3$rd primitive root of unity, 
provide a Zariski pair for $(a,b)=(1,3)$.
However, the question remains open for $(a,b) = (2,2)$ and $\lambda = 1,\zeta$,
since $H^1(\tilde{X}_\lambda,\cO_{\tilde{X}_\lambda}) = 0$.

Assume from now on that $(a,b) = (1,3)$. The curve $\mathcal{C}_{\lambda}$ has only three singular points at
the origins of the projective plane, $P_1 = [1:0:0]$, $P_2=[0:1:0]$, $P_3=[0:0:1]$ and note that 
$(S,P_i)$ is regular at $i=1,2$ whereas $S_3=(S,P_3)=\frac{1}{3}(1,1)$.
From Theorem~\ref{thm:conucleo_singular} one needs to study the cokernel of the map
\begin{equation}\label{eq:pik-ex}
\pi^{(k)}: H^0(S,\mathcal{O}_{S}(k-5)) \longrightarrow
\frac{\cO_{\CC^2,0}}{\cM_{\cC_\lambda,P_1}^{(k)}}
\oplus
\frac{\cO_{\CC^2,0}}{\cM_{\cC_\lambda,P_2}^{(k)}}
\oplus
\frac{\cO_{S_3,\zeta^{(k-5)}}}{\mathcal{M}_{\cC_\lambda,P_3}^{(k)}}
\end{equation}
for $k=0,\ldots,11$. To understand the three vector spaces on the right-hand side
one resolves the singularities of the curve as shown in Figure~\ref{fig:res}. The local type of the 
surface singularities and the self-intersections are shown for convenience.

\begin{figure}[ht]
\begin{center}
\begin{tikzpicture}
\begin{scope}
\coordinate (A) at (0,2);
\coordinate (B) at (2,0);
\coordinate (C) at (-2,0.2);

\node[right=2pt] at (A) {$\frac{1}{3}(1,1)$};
\node[left=2pt] at (A) {$P_3$};
\fill (A) circle [radius=.1cm];

\node[above=3pt] at (C) {$P_1$};
\node[below=3pt] at (C) {$(\mathbb C^2,0)$};
\fill (C) circle [radius=.1cm];

\node[above=3pt] at (B) {$P_2$};
\node[below=3pt] at (B) {$(\mathbb C^2,0)$};
\fill (B) circle [radius=.1cm];

\draw[line width=1.2pt] (A) to[out=-90, in=150] (B) to[out=150,in=0] ($(B)+(C)-.25*(A)$) to[out=180,in=9] (C) to[out=0,in=-90] (A);

\node[below=.5cm] at ($.5*(C)+.5*(B)$) {$48$};
\end{scope}

\begin{scope}[xshift=8cm]
\coordinate (A) at (-1,4);
\coordinate (B) at (2,.5);
\coordinate (C) at (-1,-1);
\coordinate (D) at (1,-1);
\coordinate (E) at (1,2);
\coordinate (F) at (-2,.5);

\draw ($.5*(A)+.5*(E)+(0,-1)$) -- ($.5*(A)+.5*(E)+(0,1)$);
\fill ($.5*(A)+.5*(E)+(0,.75)$) circle [radius=.1cm];
\node[left] at ($.5*(A)+.5*(E)+(0,.75)$) {$\frac{1}{3}(1,2)$};
\fill ($.5*(A)+.5*(E)+(0,-.2)$) circle [radius=.1cm];
\node[left] at ($.5*(A)+.5*(E)+(0,-.2)$) {$\frac{1}{6}(1,1)$};
\node[right] at ($.5*(A)+.5*(E)+(-.2,1.2)$) {$-\frac{1}{6}$};
\node[below right] at ($.5*(A)+.5*(E)+(-.2,-1)$) {$E_3$};

\draw[rotate around={45:($.5*(D)+.5*(B)$)}] ($.5*(B)+.5*(D)+(0,-1)$) node[below=3] {$-\frac{1}{12}$} -- ($.5*(B)+.5*(D)+(0,1.5)$);
\fill[rotate around={45:($.5*(D)+.5*(B)$)}] ($.5*(B)+.5*(D)+(0,.75)$) circle [radius=.1cm];
\node[right] at ($.5*(D)+.5*(B)+(-.25,.5)$) {$\frac{1}{4}(-1,3)$};
\node[right] at ($.5*(D)+.5*(B)+(-1.5,.8)$) {$E_2$};

\fill[rotate around={45:($.5*(D)+.5*(B)$)}] ($.5*(B)+.5*(D)-(0,.75)$) circle [radius=.1cm];
\node[right=10pt] at ($.5*(D)+.5*(B)-(-.25,.5)$) {$\frac{1}{3}(-1,4)$};

\draw[rotate around={-45:($.5*(F)+.5*(C)$)}] ($.5*(F)+.5*(C)+(0,-1)$) node[below=3] {$-\frac{1}{30}$} -- ($.5*(F)+.5*(C)+(0,1.5)$);
\node[right] at ($.5*(F)+.5*(C)+(-1.5,.5)$) {$\frac{1}{3}(-1,10)$};
\fill[rotate around={-45:($.5*(F)+.5*(C)$)}] ($.5*(F)+.5*(C)+(0,.75)$) circle [radius=.1cm];
\node[right] at ($.5*(F)+.5*(C)+(.8,1.25)$) {$E_1$};

\node[right] at ($.5*(F)+.5*(C)-(2.5,.5)$) {$\frac{1}{10}(-1,3)$};
\fill[rotate around={-45:($.5*(F)+.5*(C)$)}] ($.5*(F)+.5*(C)-(0,.75)$) circle [radius=.1cm];

\draw[line width=1.2] (0,1) ellipse [x radius=.75cm,y radius=1.5cm] ;
\node at (0,-.8) {$0$};
\end{scope}

\end{tikzpicture}
\end{center}
\caption{Resolution of $\cC_\lambda$}
\label{fig:res}
\end{figure}

Locally at $P_3$ the polynomial $G_\lambda$ can be written as
$$
(x+\lambda y)^3 + 3xy^3 \left( y^2 - 2(\lambda^2+\lambda+1)xy + \lambda^2x^2 \right)
+ 3x^2y^6(y+\lambda x)+x^3y^9
$$
and thus changing the local coordinates with $(x,y) = (x_3,\lambda^2(y_3-x_3))$
which is compatible with the group action, one obtains 
\begin{equation}
\label{eq:deformation}
\left( \lambda^3-1 \right)
\left( A_{\lambda,3} x_3^3+A_{\lambda,4} x_3^2y_3+ A_{\lambda,5} x_3y_3^2 \right)
+ B_\lambda x_3^6 + \lambda^6 y_3^3 + \sum_{i,j} C_{\lambda ij} x_3^i y_3^j
\end{equation}
with $i+2j>6$, $A_{\lambda,3}=\lambda^2(3\lambda^2 + 2\lambda + 3)$,
$A_{\lambda,4}=-3\lambda^2(2\lambda^2 + \lambda + 1)$,
$A_{\lambda,5}=3\lambda^4$, and 
$B_\lambda=-3\lambda^8(3\lambda^2 + 2\lambda + 3)$. 
If $\lambda=1,\zeta$, then the blow-up with respect to the weight $(1,2)$ resolves the curve at the 
point~$P_3\in S_3=\frac{1}{3}(1,1)$. In addition, this shows that $(G_\lambda,P_3)$, $\lambda=1,\zeta$, 
have the same topological type, since they have the same resolution graph, as shown in Figure~\ref{fig:res}. 
Moreover, note from equation~\eqref{eq:deformation} that the deformations in this family near $\cC_1$ or 
$\cC_\zeta$ are not equisingular.
Let us denote by $E_3$ the exceptional divisor, then $\mult_{E_3}\pi^*(x_3^iy_3^j)=i+2j$ 
and hence $m_3 = 6$, $\nu_3 = 3$ (see section~\ref{sec:Qres}). 
Note that the equation $x_3^6 + y_3^3$ defines an \emph{irreducible} curve in $S_3$. 
Using these local coordinates $\mathcal{O}_{S_3} = \CC\{ x_3^3, x_3^2y_3,x_3y_3^2,y_3^3\}$.
The third vector space in~\eqref{eq:pik-ex} can be described as
\begin{equation}\label{eq:M_P3}
\frac{\cO_{S_3,\zeta^{(k-5)}}}{\cM_{\cC_\lambda,P_3}^{(k)}}
= \frac{\cO_{S_3,\zeta^{(k+1)}}}{ \left\{ g \ \big| \ \mult_{E_3} \pi^{*} g
> \dfrac{k}{2} - 3 \right\}} \cong 
\begin{cases}
0 & \text{ if }\ \ k = 0,\ldots,7, \\
\langle x_3^{\overline{k+1}} \rangle_\CC & \text{ if } \ \ k = 8,\ldots,11,
\end{cases}
\end{equation}
where $\overline{k+1}$ denotes $k+1$ modulo 3. This is a consequence of the fact that
$\mathcal{O}_{S_3,\zeta^0} = \mathcal{O}_{S_3}$,
$\mathcal{O}_{S_3,\zeta^1} = \langle x_3,y_3 \rangle\mathcal{O}_{S_3}$,
and $\mathcal{O}_{S_3,\zeta^2} = \langle x_3^2, x_3y_3, y_3^2 \rangle\mathcal{O}_{S_3}$.

For $P_2 = [0:1:0]$ one uses the change of coordinates on $G_\lambda(x,1,z)$,
$(x, z) = (x_2, z_2-x_2)$ to get a polynomial
of the form $\beta_\lambda x_2^4 + z_2^3 + \sum_{ij} b_{\lambda ij} x_2^i z_2^j$ with $3i+4j>12$ and $\beta_\lambda \neq 0$.
The blow-up with respect to $(3,4)$ resolves the singular point. The exceptional divisor is denoted by $E_2$
and $m_2 = 12$, $\nu_2 = 7$. The second summand in~\eqref{eq:pik-ex} is
\begin{equation}\label{eq:M_P2}
\frac{\mathcal{O}_{\CC^2}}{\mathcal{M}_{\mathcal{C}_{\lambda},P_2}^{(k)}}
= \frac{\mathcal{O}_{\CC^2}}{ \left\{ g \ \big| \ \mult_{E_3} \pi^{*} g > k - 7 \right\}} \cong 
\begin{cases}
0 & \text{ if }\ \ k = 0,\ldots,6, \\
\CC & \text{ if } \ \ k = 7,8,9, \\
\langle 1,x_2 \rangle_\CC & \text{ if } \ \ k = 10, \\
\langle 1,x_2,z_2 \rangle_\CC & \text{ if } \ \ k = 11.
\end{cases}
\end{equation}

Finally, for $P_1 = [1:0:0]$ one blows up $G_\lambda(1,y_1, \lambda^2(z_1-y_1^3)) =
\alpha_\lambda y_1^{10} + z_1^3 + \sum_{ij} a_{\lambda ij} y_1^i z_1^j$ with $3i+10j>30$ and $\alpha_\lambda \neq 0$
with weight vector $(3,10)$ to resolve the singular point. The exceptional divisor $E_1$ gives
rise to $m_1=30$ and $\nu_1=13$. The first vector space in~\eqref{eq:pik-ex} is
\begin{equation}\label{eq:M_P1}
\frac{\mathcal{O}_{\CC^2}}{ \left\{ g \ \big| \ \mult_{E_3} \pi^{*} g > \dfrac{5k}{2} - 13 \right\}}
\cong
\begin{cases}
0 & \text{ if } \ \ k = 0,\ldots,5, \\
\CC & \text{ if } \ \ k = 6, \\
\langle 1,y_1 \rangle_\CC & \text{ if } \ \ k = 7, \\
\langle 1,y_1,y_1^2 \rangle_\CC & \text{ if } \ \ k = 8, \\
\langle 1,y_1,y_1^2,y_1^3 \rangle_\CC & \text{ if } \ \ k = 9, \\
\langle 1,y_1,y_1^2,y_1^3,y_1^4,z_1 \rangle_\CC & \text{ if } \ \ k = 10, \\
\langle 1,y_1,y_1^2,y_1^3,y_1^4,z_1, y_1 z_1 \rangle_\CC & \text{ if } \ \ k = 11.
\end{cases}
\end{equation}
Using the previous calculations one can check that $H^1(Y,\cO_Y(L^{(k)})) = 0$ for any $k \neq 10$
and $\lambda = 1,\zeta$ by studying the cokernel of~$\pi^{(k)}$.

To finish we give the details for $k=10$. In order to study the cokernel of $\pi^{(10)}$, let us fix
a basis $\mathcal{B}_1 := \{ x^5, x^4y, x^3y^2, x^2y^3, xy^4, y^5, x^2 z, xyz, y^2z \}$ of
$H^0(\PP^2_w,\mathcal{O}_{\PP^2_w}(k-5))$ and a basis
$\mathcal{B}_2 = \langle 1,y_1,y_1^2,y_1^3,y_1^4,z_1 \rangle_\CC \oplus \langle 1,x_2 \rangle_\CC
\oplus \langle x_3^2 \rangle_\CC =: \mathcal{B}_{21} \oplus \mathcal{B}_{22} \oplus \mathcal{B}_{23}$
of the vector space on the right-hand side in~\eqref{eq:pik-ex}
computed above, see~\eqref{eq:M_P3}, \eqref{eq:M_P2}, \eqref{eq:M_P1}. The matrix associated with
$\pi^{(10)}$ in the bases $\mathcal{B}_1$ and $\mathcal{B}_2$ becomes
$$
A = \begin{pmatrix}
1 & 0 & 0 & 0 & 0 & 0 & 0 & 0 & 0 \\
0 & 1 & 0 & 0 & 0 & 0 & 0 & 0 & 0 \\
0 & 0 & 1 & 0 & 0 & 0 & 0 & 0 & 0 \\
0 & 0 & 0 & 1 & 0 & 0 & -\lambda^2 & 0 & 0 \\
0 & 0 & 0 & 0 & 1 & 0 & 0 & -\lambda^2 & 0 \\
0 & 0 & 0 & 0 & 0 & 0 & \lambda^2 & 0 & 0 \\
0 & 0 & 0 & 0 & 0 & 1 & 0 & 0 & 0 \\
0 & 0 & 0 & 0 & 1 & 0 & 0 & 0 & -1 \\
0 & 0 & 0 & 0 & 0 & 0 & 1 & -\lambda^2 & \lambda
\end{pmatrix}.
$$
To clarify how this matrix was built let us recall the definition of $\pi^{(10)}$.
Given $F(x,y,z)$ its image under $\pi^{(10)}$ is
$$
F(1, y_1, \lambda^2(z_1-y_1^3))_{\mathcal{B}_{21}}
\oplus F(x_2, 1, z_2-x_2)_{\mathcal{B}_{22}} \oplus F(x_3,\lambda^2(y_3-x_3),1)_{\mathcal{B}_{23}},
$$
where the subindex indicates coordinates in the corresponding basis. This way for example
$\pi^{(10)}(xyz) = (\lambda^2 y_1 z_1 - \lambda^2 y_1^4)_{\mathcal{B}_{21}}
\oplus (x_2 z_2 - x_2^2)_{\mathcal{B}_{22}}
\oplus (\lambda^2 x_3 y_3 - \lambda^2 x_3^2)_{\mathcal{B}_{23}}$ which produces
the vector $(0,0,0,0,-\lambda^2,0,0,0,-\lambda^2)$, namely the $8$th column of $A$.

For $\lambda = \zeta$ the rank of $A$ is $9$, thus $\coker \pi^{(10)} = 0$. However, for $\lambda = 1$
the rank is~$8$ and then the kernel and the cokernel has dimension $1$. The kernel is generated
by the quasi-homogeneous polynomial 
$F=xy^4 + xyz + y^2z=y(x y^3 +x z+ y z)$ and the cokernel is generated for instance by $x_3^2 \in \mathcal{B}_{23}$.
This shows that $\dim H^1(\tilde{X}_{1},\CC) = 2$ while $\dim H^1(\tilde{X}_{\zeta},\CC) = 0$.
Note that the expected dimension of the cokernel is 0 since both spaces have the same dimension.
The existence of the special curve $\{F=0\}$ whose equation is in the kernel of this map, satisfying certain local 
properties at $P_1$, $P_2$, and $P_3$ determines the non-trivial irregularity for $\tilde{X}_{1}$. This phenomenon, 
classically referred to as \emph{superabundance}, can be geometrically seen in Figure~\ref{fig:superabundance}. 

\begin{figure}[ht]
\begin{center}
\begin{tikzpicture}
\coordinate (A) at (0,2);
\coordinate (B) at (2,0);
\coordinate (C) at (-2,0.2);
\node[left=2pt] at (A) {$P_3$};
\fill (A) circle [radius=.1cm];
\node[below=3pt] at (C) {$P_1$};
\fill (C) circle [radius=.1cm];
\node[below=3pt] at (B) {$P_2$};
\fill (B) circle [radius=.1cm];
\draw[line width=1.2pt] (A) to[out=-90, in=150] (B) to[out=150,in=0] ($(B)+(C)-.25*(A)$) to[out=180,in=9] (C) to[out=0,in=-90] (A);
\draw[line width=1.2pt,color=blue,dashed] (C) to[out=0, in=200] ($(C)+(1.7,0.3)$) to[out=30,in=-90] (A) to[out=90,in=-30] (B) to[out=150,in=0] (C);
\draw[line width=1.2pt,color=blue,dashed] ($1.3*(C)-.3*(A)$) -- ($1.3*(A)-.3*(C)$);
\end{tikzpicture}
\end{center}
\caption{Superabundance in $\cC_1$}
\label{fig:superabundance}
\end{figure}


\providecommand{\bysame}{\leavevmode\hbox to3em{\hrulefill}\thinspace}
\providecommand{\MR}{\relax\ifhmode\unskip\space\fi MR }
\providecommand{\MRhref}[2]{%
  \href{http://www.ams.org/mathscinet-getitem?mr=#1}{#2}
}
\providecommand{\href}[2]{#2}

\end{document}